\documentclass[11pt]{article}
\usepackage{amsthm, amsmath, amssymb, amsfonts, url, booktabs, tikz, setspace, fancyhdr, bm,xcolor}

\usepackage{mathrsfs} 
\usepackage[margin = 2.4cm]{geometry}
\usepackage{hyperref, enumerate}
\usepackage[shortlabels]{enumitem}
\usepackage[babel]{microtype}
\usepackage[english]{babel}
\usepackage[capitalise]{cleveref}
\usepackage{comment}
\usepackage{bbm}
\usepackage{csquotes}
\usepackage{mathabx}
\usepackage{tikz}
\usepackage{graphicx}
\usepackage{float}
\usepackage{thm-restate}
\usepackage{mathtools}
\usepackage{booktabs}
\usepackage{multirow}

\usepackage{tikz}
\usepackage{graphicx}
\usepackage{float}
\usepackage{subcaption} % 导言区加入
\usetikzlibrary{positioning}
\usetikzlibrary{shapes,arrows,arrows,positioning,fit}
\usetikzlibrary{decorations.pathreplacing}
\counterwithin{figure}{section}

\newtheorem{theorem}{Theorem}[section]
\newtheorem{prop}[theorem]{Proposition}
\newtheorem{lemma}[theorem]{Lemma}
\newtheorem{cor}[theorem]{Corollary}
\newtheorem{conj}[theorem]{Conjecture}
\newtheorem{cons}{Construction}
\newtheorem{claim}[theorem]{Claim}
\newtheorem{case}{Case}

\newtheorem{fact}[theorem]{Fact}

\newtheorem*{property}{Property}
  
\theoremstyle{definition}
\newtheorem{definition}[theorem]{Definition}

\definecolor{mypink}{RGB}{255, 65, 172}
\definecolor{myorange}{RGB}{255, 178, 49}
\definecolor{darktangerine}{rgb}{1.0, 0.66, 0.07}
\definecolor{darkpastelgreen}{rgb}{0.01, 0.75, 0.24}
\theoremstyle{definition}
\newtheorem{defn}[theorem]{Definition}
\newtheorem*{defn-non}{Definition}

\usepackage{todonotes}

\newlist{Case}{enumerate}{2}
\setlist[Case, 1]{%
    label           =   {\bfseries Case \arabic*.},
    labelindent=1em ,labelwidth=1.3cm, labelsep*=1em, leftmargin =!
}
\setlist[Case, 2]{%
    label           =   {\bfseries Subcase \arabic{Casei}.\arabic*.},
    labelindent=-1em ,labelwidth=1.3cm, labelsep*=1em, leftmargin =!
}

\newenvironment{poc}{\begin{proof}[Proof of claim]}{\end{proof}}

\newcommand{\C}[1]{{\protect\mathcal{#1}}}

\newcommand{\ceil}[1]{\lceil #1\rceil}

\usepackage{todonotes} 

\newcommand{\eps}{\varepsilon}

\title{Chromatic thresholds for pairs of graphs}
\author{Jun Gao \thanks{Mathematics Institute, University of Warwick, Coventry, UK. Supported by ERC Advanced Grant 101020255.}
\and Hong Liu \thanks{Extremal Combinatorics and Probability Group (ECOPRO), Institute for Basic Science (IBS),  Daejeon, South Korea. Supported by the Institute for Basic Science (IBS-R029-C4).}
\and Zhuo Wu\thanks{Departament de Matemàtiques, Universitat Politècnica de Catalunya (UPC),
Carrer de Pau Gargallo 14, 08028 Barcelona, Spain. Z. Wu acknowledges the bilateral AEI+DFG research project PCI2024-155080-2: SRC-ExCo – Structure, Randomness and Computational Methods in Extremal Combinatorics, and the PID2023-147202NB-I00 (COCOA: COntemporary COmbinatorics and its Applications), all funded by MICIU/AEI/10.13039/501100011033. Email: \texttt{zhuo.wu@upc.edu}.}
\and Yisai Xue\thanks{School of Mathematics and Statistics, Ningbo University, Ningbo, China. Supported by the National Nature Science Foundation of China (No. 12501486). Email: xueyisai@nbu.edu.cn}}

\begin{document}

\maketitle

\begin{abstract}
The chromatic threshold of a graph $H$ is the minimum-degree density above which every $H$-free graph has bounded chromatic number. We study a two-color Ramsey analogue: for graphs $H_1$ and $H_2$, we ask for the minimum-degree density above which every graph that admits a red-blue edge-coloring with no red copy of $H_1$ and no blue copy of $H_2$ has bounded chromatic number. We give a complete answer when both $H_1$ and $H_2$ are 3-chromatic. The threshold takes exactly one of the five values
\[
\frac23,\quad \frac57,\quad \frac34,\quad \frac79,\quad \frac45,
\]
and we characterize precisely which pairs $(H_1,H_2)$ give each value. The classification is determined by the ordinary chromatic thresholds of $H_1$ and $H_2$ and by their embeddability into a hierarchy of $C_5$-type Ramsey configurations.
\end{abstract}

\section{Introduction}
In 1973, Erd\H{o}s and Simonovits~\cite{erdos1973valence} initiated a systematic study of the minimum-degree condition above which forbidding a fixed graph $H$ forces bounded chromatic number. They introduced  the \emph{chromatic threshold} of a graph $H$, defined as
\begin{align*}
    \delta_\chi(H):=
    \inf \{d: \exists~C&=C(H, d)~\text{ s.t. $\forall$ $n$-vertex $H$-free graph $G$}, \\
    & \text{ if $\delta(G) \geq d n$, then $\chi(G) \leq C$}\}.
\end{align*}
Thus, for $d<\delta_{\chi}(H)$ the chromatic number of $H$-free graphs with minimum degree $dn$ may be arbitrarily large, while for $d>\delta_{\chi}(H)$ it is necessarily bounded.

Erd\H{o}s and Simonovits conjectured that $\delta_\chi(K_3)=1/3$, which was proved by Thomassen~\cite{thomassen2002chromatic}. Subsequently, Goddard and Lyle~\cite{goddard2011dense} and Nikiforov~\cite{nikiforov2010chromatic} extended this to complete graphs, showing that $\delta_\chi(K_r)=\frac{2r-5}{2r-3}.$
Additionally, \L uczak and Thomass\'e~\cite{luczak2010coloring} identified a large family of graphs with chromatic threshold zero. Finally, Allen, B{\"o}ttcher, Griffiths, Kohayakawa and Morris~\cite{allen2013chromatic} determined $\delta_\chi(H)$ for all graphs $H$, showing that $\delta_\chi(H) \in\left\{\frac{r-3}{r-2},~\frac{2 r-5}{2 r-3},~\frac{r-2}{r-1}\right\}$ where $\chi(H)=r \geq 3$.

%In particular, when $\chi(H)=3$, the possible values are $0$, $1/3$ and $1/2$. These three values already reflect substantial structural information about $H$: for example, near-acyclic graphs have threshold zero, while graphs whose decomposition family contains no forest have threshold $1/2$.

Recent work has developed chromatic thresholds in several directions, including stability near extremal thresholds~\cite{liu-shangguan-wu-xue}, connections with the $(p,q)$-theorem in discrete geometry and blow-up phenomena~\cite{liu2024beyond}, interaction with VC-dimension theory and homomorphism threshold~\cite{huang2025interpolating}, and analogues for linear equations over finite fields~\cite{liu2026chromatic} and applications in topological dynamics.

Another fundamental topic in graph theory is Ramsey theory. Given graphs $G$, $H_1$ and $H_2$, we say that $G$ is \emph{$(H_1,H_2)$-Ramsey} if every $2$-edge-coloring of $G$ contains a monochromatic copy of $H_i$ in color $i$, for some $i \in \{1,2\}$; otherwise, we call $G$ \emph{$(H_1,H_2)$-free}. Many classical problems in extremal graph theory have multicolor extensions. Take Tur\'an type problem as an example. It is known that every sufficiently large \((H_1,H_2)\)-free graph has edge density at most $1-\frac{1}{R(\chi(H_1),\chi(H_2))-1}+o(1)$. Such multicolor extremal questions also appear in Ramsey--Tur\'an theory; see, for instance, \cite{kim2019two}.

In analogy with the chromatic threshold, it is natural to ask how large the minimum-degree density must be before all $(H_1,H_2)$-free graphs have bounded chromatic number. Formally, we define the chromatic threshold of a pair of graphs $(H_1,H_2)$ as
\begin{align*}
\delta_\chi(H_1,H_2):=
     \inf \{d: \exists~C&=C(H_1,H_2, d)~\text{ s.t. $\forall$ $n$-vertex $(H_1,H_2)$-free graph $G$}, \\
    & \text{ if $\delta(G) \geq d n$, then $\chi(G) \leq C$}\}.
\end{align*}

This parameter provides a natural hybrid of extremal graph theory and Ramsey theory. Already in the one-color case, the value of $\delta_\chi(H)$ depends on structural properties of $H$ beyond its chromatic number. In the two-color setting, the situation is further influenced by the structure of Ramsey colorings avoiding $H_1$ and $H_2$. The difficulty of this problem is closely tied to the complexity of Ramsey theory. Even for cliques, many Ramsey numbers are unknown, making a general characterization of $\delta_\chi(H_1,H_2)$ highly challenging. However, when both $H_1$ and $H_2$ are 3-chromatic graphs, the small Ramsey number $R(3,3)=6$ gives us hope for a precise structural analysis. 

For a start, the above multicolor Tur\'an theorem shows that the pair $(K_3,K_3)$ has the same Tur\'an density as $K_6=K_{R(3,3)}$. Could it be that $\delta_\chi(K_3,K_3)=\delta_\chi(K_6)$? As we shall see, this is not the case and the full picture of $\delta_\chi(H_1,H_2)$ for 3-chromatic graphs is rather complicated. 

Our main result provides a complete classification for pairs of 3-chromatic graphs, which splits into six distinct structural regimes and yields five possible values. To state the classification, we introduce an embeddability parameter. Let $\C C_5$ denote the family of all blowups of 5-cycles. Starting from these blowups, we define three larger families $\C C_5 \subseteq \C Z_2 \subseteq \C Z_1 \subseteq \C Z_0.$
Informally, $\C Z_0,\C Z_1$ and $\C Z_2$ are obtained from a blowup of $C_5$ by replacing some parts with high-chromatic Zykov-type graphs. For a 3-chromatic graph $H$, its embeddability $\mathrm{emb}(H)\in\{0,1,2,3,4\}$ measures how easy it is to embed $H$; the higher the easier: $\mathrm{emb}(H)=4$ if it embeds into $\C C_5$, while $\mathrm{emb}(H)=0$ means it cannot be embedded into $\C Z_0$. The precise definitions are given in Section~\ref{subsec: emb}. We write $\mathrm{emb}(H_1,H_2)=(\le a,\le b)$ if $\mathrm{emb}(H_1)\le a$ and $\mathrm{emb}(H_2)\le b$.

\begin{theorem}[Classification of pairs by embeddability]
\label{thm:main}
Let \(H_1,H_2\) be \(3\)-chromatic graphs with $\operatorname{emb}(H_1)\le \operatorname{emb}(H_2)$. Then 
\[
\delta_\chi(H_1,H_2)\in
\left\{
\frac23,\frac57,\frac34,\frac79,\frac45
\right\}.
\]
More precisely, set 
$\eta:=\max\{\delta_\chi(H_1),\delta_\chi(H_2)\}$, then:
\[
\delta_\chi(H_1,H_2)=
\begin{cases}
4/5,
    & \text{if } \operatorname{emb}(H_1,H_2)
      =(0,\le 3),\\
7/9,
    & \text{if } \operatorname{emb}(H_1,H_2)
      =(1,\le3),\\
3/4,
    & \text{if } \operatorname{emb}(H_1,H_2)
      =(2,\le 3),\\
3/4,
    & \text{if } \operatorname{emb}(H_1,H_2)
      \in \{(3,3),(\le 4,4)\}
      \text{ and } \eta=1/2,\\
5/7,
    & \text{if } \operatorname{emb}(H_1,H_2)
      \in \{(3,3),(\le 4,4)\}
      \text{ and } \eta=1/3,\\
2/3,
    & \text{if } \operatorname{emb}(H_1,H_2)
      \in \{(3,3),(\le 4,4)\}
      \text{ and } \eta=0.
\end{cases}
\]
\end{theorem}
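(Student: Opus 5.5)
The proof splits into a lower-bound part (constructions) and an upper-bound part (structure plus colouring); both are organised around the hierarchy $\C C_5\subseteq\C Z_2\subseteq\C Z_1\subseteq\C Z_0$ of Section~\ref{subsec: emb}.

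\textbf{Lower bounds.} For each value we build an $(H_1,H_2)$-free graph with large chromatic number as a weighted blow-up of a small "template" graph $T$. Each vertex of $T$ is replaced either by an independent set or by a high-chromatic, high-girth Zykov/Borsuk--Hajnal type graph. The edges of $T$ are then $2$-coloured so that each colour class is a blow-up of something $H_i$ cannot embed in.
\begin{itemize}
\item For $4/5$: take $T=K_5$ with the unique $C_5\cup C_5$ colouring, where one cycle is red and one is blue. Replace one vertex by a Zykov graph whose parts are attached in a way that keeps the red class inside $\C Z_0$. Since $\operatorname{emb}(H_1)=0$, there is no red $H_1$. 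The blue class is a $C_5$-blow-up with a sparse high-girth piece, so there is no blue copy of any $3$-chromatic $H_2$ with $\delta_\chi(H_2)$-type obstructions.
\item For $7/9$ and $3/4$: use analogous templates on $9$ and $4$ (or $8$) vertices. The red class now lies in $\C Z_1$, resp.\ $\C Z_2$, and the blue class is bipartite plus sparse.
\item For $5/7$, $3/4$, $2/3$ in the remaining cases: combine a $C_5$-type colouring of a blow-up with the one-colour extremal constructions of \cite{allen2013chromatic} for $\eta\in\{0,1/3,1/2\}$, embedded into a single part. Since $(1+\eta)/(\cdot)$-type weights give exactly $2/3$, $5/7$, $3/4$, we optimise the part weights by solving the linear system that equalises the degrees.
\end{itemize}
Each construction needs two checks: minimum degree $(\delta-o(1))n$, and chromatic number at least the chromatic number of the Zykov/Borsuk part.

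\textbf{Upper bounds.} Let $G$ be $(H_1,H_2)$-free with $\delta(G)\ge(\delta+\eps)n$, and fix a colouring with no red $H_1$ and no blue $H_2$.
\begin{enumerate}
\item Apply the multicolour regularity lemma and pass to the reduced coloured graph $R$, which inherits minimum degree about $\delta|R|$. Since $R(3,3)=6$ and $\delta>2/3$, $R$ contains $K_4$'s.
\item By the density/Ramsey argument, the coloured $K_4$'s and $K_5$'s in $R$ must avoid monochromatic triangles. Otherwise a monochromatic blown-up triangle embeds a $3$-chromatic $H_i$ by the Erd\H{o}s--Stone counting lemma. So every $K_5$ in $R$ is coloured as two complementary $C_5$'s.
\item Following the Allen et al.\ strategy, use the $(\eps,d)$ VC-dimension/paired-neighbourhood argument to show that if $\chi(G)$ were large, a bounded number of vertices have neighbourhoods that robustly cover a configuration in $R$. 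This produces a Zykov-type structure attached to a coloured $C_5$ blow-up in one colour class.
\item By the definition of $\C Z_j$, this yields a red copy of $H_1$ or a blue copy of $H_2$, provided $\delta$ exceeds the stated threshold for the corresponding $\operatorname{emb}$ value.
\end{enumerate}
Concretely, the numbers $4/5$, $7/9$, $3/4$ arise as the minimum degrees at which the common neighbourhoods of a $C_5$-coloured $K_5$-structure in $R$ are forced to contain a high-chromatic piece of the required depth. In the $\eta$-cases, we additionally invoke $\delta_\chi(H_i)$ inside a monochromatic bipartite-like neighbourhood to get bounded chromatic number there.

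\textbf{Main obstacle.} I expect the main obstacle to be the stability/structure step (3--4) in the regimes $(1,\le3)$ and $(2,\le3)$. There one must show that the only near-extremal coloured reduced graphs are the specific templates, and that a large-chromatic $G$ forces the nested Zykov layers of $\C Z_1$ or $\C Z_2$, not merely $\C Z_0$. I would first try a weighted LP over the possible coloured $C_5$-blow-up templates to pin down the thresholds, then handle each template by a case analysis on how high-chromatic vertex sets attach to its parts.
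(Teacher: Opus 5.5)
There is a genuine gap on both the lower-bound and the upper-bound side.

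\textbf{Lower bounds.} Your templates for $7/9$, and for $3/4$ in the regime $(2,\le 3)$, make the blue class bipartite plus sparse, and this cannot work. Suppose blue lies between sides $P,Q$ apart from $o(n^2)$ edges. Then the red graph inside each side is $K_3[|H_1|]$-free. By Erd\H{o}s--Stone, some vertex of $P$ has degree at most $|Q|+(\tfrac12+o(1))|P|$, and some vertex of $Q$ has degree at most $|P|+(\tfrac12+o(1))|Q|$. Hence $\delta(G)\le(\tfrac34+o(1))n<\tfrac79 n$. For $(K_3,K_3)$, whose answer is $3/4$, a bipartite blue class even stops at $5/7$, because the high-chromatic red side is $K_3$-free and $\delta_\chi(K_3)=1/3$.

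What is missing is that for $\operatorname{emb}(H_1,H_2)=(\le 2,\le 3)$ the blue class must be exactly a $C_5$-blowup; this is precisely where $H_2\not\hookrightarrow\C C_5$ is used. One keeps the Ramsey colouring of $K_5$ with weights $(\tfrac19,\tfrac29,\tfrac29,\tfrac29,\tfrac29)$, resp.\ $(o(1),\tfrac14,\tfrac14,\tfrac14,\tfrac14)$. Large chromatic number then comes from a Borsuk--Hajnal gadget $\mathrm{BH}^-$ replacing one, resp.\ both, red pairs at the light part, and \cref{prop:embed BH*} is what makes the red class $H_1$-free.

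For $4/5$, the gadget must be a red graph of large chromatic number and large girth. Note that $Z^t_k$ is $3$-colourable, and a $C_4$ in $V_1$ plus two vertices of $V_2$ is already a red $K_3[2]$, which has $\operatorname{emb}=0$. Blue is $H_2$-free because it is a $C_5$-blowup and $\operatorname{emb}(H_2)\le 3$, not for any $\delta_\chi$ reason.

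Conversely, in the $\eta$-regimes a $C_5$-type colouring is fatal, since $H_2$ may lie in $\C C_5$ (e.g.\ $H_1=H_2=C_5$). There, blue must be complete bipartite between $P$ and $Q$. Put the one-colour extremal graph for $\eta$ in red on $P$, a red complete bipartite graph on $Q$, and take $|P|=n/(3-2\eta)$. This gives minimum degree $\frac{2-\eta}{3-2\eta}n\in\{\tfrac34 n,\tfrac57 n,\tfrac23 n\}$.

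\textbf{Upper bounds.} Your step 2 with Tur\'an's theorem gives $4/5$. It also gives the case $\operatorname{emb}=4$, $\eta=1/2$, since a red $C_5$ in $R$ blows up to a red $H_1$ when $H_1\hookrightarrow\C C_5$. But steps 3--4 are a black box that never derives $7/9$, $3/4$ or $5/7$, and no stability or LP analysis is needed. The missing mechanism has three parts; below, $f$ is the target threshold, $\delta(G)\ge(f+\gamma)n$, and $k=|R|$.

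(a) Partition $V(G)$ by the three-level red/blue density of each vertex to every cluster. By subadditivity of $\chi$, some class $X$ has huge (say red) chromatic number. All of $X$ sees the same set $I_r^+$ (resp.\ $I_b^+$) of clusters of positive red (resp.\ blue) density, and the same set $I_r^1$ of clusters of red density above $\tfrac12$.

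(b) $R$ has no red edge inside $I_r^+$ and no blue edge inside $I_b^+$, since otherwise $X$ closes a monochromatic $K_3[t]$. Together with Mantel and $\alpha(R)<(1-f)k$, this gives $|I_r^+|,|I_b^+|\le 2(1-f)k$ and $|N_{R_r}(u)\cap I_b^+|<(1-f)k$. Meanwhile the minimum degree forces $|I_r^1\cup I_b^+|+\tfrac12|I_r^+\setminus(I_r^1\cup I_b^+)|>(f+\gamma/4)k$.

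(c) \cref{lmm:embed key}: take a red path $V_1V_2V_3V_4$ in $R$ with $X$ red-dense to $V_1,V_4$. At density levels $(\tfrac12+\delta,\tfrac12+\delta)$, $(\delta,\tfrac12+\delta)$ and $(\delta,\delta)$ it yields every $H$ embeddable in $\C Z_0$, $\C Z_1$ and $\C Z_2$ respectively. The last case uses two Zykov graphs sharing one forest part, obtained by a random-permutation coupling of the two end clusters (\cref{lmm:xiaobaima}).

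Each threshold then falls out of a short degree count at a suitable vertex of $R$. For example, take an endpoint of a red edge inside $I_b^+$ whose red neighbours in $I_r^1$ (or $I_r^+$) are excluded by (c). Your remark about $\delta_\chi(H_i)$ corresponds to \cref{lmm:0-1/2}, which forces $I_r^1=\varnothing$ when $\eta\le 1/3$ and $I_r^+=\varnothing$ when $\eta=0$.
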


Our theorem shows that the two-color chromatic threshold is not determined only by the chromatic number of $H_i$ but also by their embeddabilities into the hierarchy of $C_5$-type Ramsey configurations. 
The decision diagram is depicted in Figure~\ref{Figure:main0}.

\begin{figure}[!ht]
\centering
\resizebox{1\textwidth}{!}{%
\begin{tikzpicture}[
    node distance=1.7cm and 2.5cm,
    decision/.style={
        diamond,
        aspect=2.2,
        draw=black!0,
        fill=blue!15,
        text width=11em,
        align=center,
        inner sep=1pt,
        minimum height=4.8em
    },
    block/.style={
        rectangle,
        draw=black!0,
        fill=mypink!12,
        text width=6em,
        align=center,
        rounded corners,
        minimum height=3em
    },
    start/.style={
        rectangle,
        draw=black!0,
        fill=yellow!15,
        text width=10em,
        align=center,
        rounded corners,
        minimum height=3.8em
    },
    line/.style={draw, black, line width=1.1pt, -latex'}
]

\node[start] (input) {Input $(H_1,H_2)$};

\node[decision, right=of input] (cc5) {Some $H_i\hookrightarrow \C C_5$?};

\node[decision, below=of cc5] (eta12) {$\eta=1/2$?};
\node[decision, left=of eta12] (eta13) {$\eta=1/3$?};

\node[block, below=of eta12] (val34a) {$3/4$};
\node[block, below=of eta13] (val57) {$5/7$};
\node[block, left=of val57] (val23) {$2/3$};

\node[decision, right=of cc5] (z0) {Both $H_i\hookrightarrow \C Z_0$?};
\node[decision, below=of z0] (z1) {Both $H_i\hookrightarrow \C Z_1$?};
\node[decision, below=of z1] (z2) {Both $H_i\hookrightarrow \C Z_2$?};

\node[block, right=of z0] (val45) {$4/5$};
\node[block, right=of z1] (val79) {$7/9$};
\node[block, right=of z2] (val34b) {$3/4$};

\path[line] (input) -- (cc5);

\path[line] (cc5) -- node[right] {Yes} (eta12);
\path[line] (cc5) -- node[above] {No} (z0);

\path[line] (eta12) -- node[right] {Yes} (val34a);
\path[line] (eta12) -- node[above] {No} (eta13);
\path[line] (eta13) -- node[right] {Yes} (val57);
\path[line] (eta13) -| node[near start, above] {No} (val23);

\path[line] (z0) -- node[above] {No} (val45);
\path[line] (z0) -- node[right] {Yes} (z1);
\path[line] (z1) -- node[above] {No} (val79);
\path[line] (z1) -- node[right] {Yes} (z2);
\path[line] (z2) -- node[above] {No} (val34b);

% \path[line] (z2.west) -| node[pos=0.25, below] {Yes} node[pos=0.75, right] {case $(3,3)$} (eta12.east);
\path[line] (z2.west)
    -- ++(-0.5cm,0)
    |- node[pos=0.25, right=2pt] {Yes}
       node[pos=0.72, below=6pt] {case $(3,3)$}
       (eta12.east);

\node[below=0.25cm of val23] {emb \(\in \{(3,3),(\le 4,4)\}\)};
\node[below=0.25cm of val57] {emb \(\in \{(3,3),(\le 4,4)\}\)};
\node[below=0.25cm of val34a] {emb \(\in \{(3,3),(\le 4,4)\}\)};
\node[below=0.25cm of val45] {emb \(=(0,\le 3)\)};
\node[below=0.25cm of val79] {emb \(=(1,\le 3)\)};
\node[below=0.25cm of val34b] {emb \(=(2,\le 3)\)};

\end{tikzpicture}
}
\caption{
Decision diagram for $\delta_\chi(H_1,H_2)$ when $\chi(H_i)=3$. Here $\eta=\max\{\delta_\chi(H_1),\delta_\chi(H_2)\}$.
The right-hand branch applies when neither graph embeds into $\C C_5$; the left-hand branch applies when at least one graph embeds into $\C C_5$, or when both graphs have embeddability $3$.
}
\label{Figure:main0}
\end{figure}

Even in the symmetric case $H_1=H_2=H$, the classification is already rich. The following table of examples illustrates the possible behaviors. We refer the reader to the end of Section~\ref{subsec: emb} for examples of all six structural behaviors in~\cref{thm:main} for general pairs $(H_1,H_2)$.

\begin{table}[htbp]
\centering
\renewcommand{\arraystretch}{1.15}
\setlength{\tabcolsep}{8pt}
\begin{tabular}{c c c}
\toprule
Graph $H$ & Structural reason & $\delta_\chi(H,H)$ \\
\midrule
$K_3[2]$ & $\mathrm{emb}(H)=0$ & $4/5$ \\
$C_5^+[2]$ & $\mathrm{emb}(H)=1$ & $7/9$ \\
$K_3$ & $\mathrm{emb}(H)=2$ & $3/4$ \\
$C_5[2]$ & $\mathrm{emb}(H)\in\{3,4\}$ and $\delta_\chi(H)=1/2$ & $3/4$ \\
$C^*_{10}[3]$ & $\mathrm{emb}(H)\in\{3,4\}$ and $\delta_\chi(H)=1/3$ & $5/7$ \\
$C_5$, Petersen graph & $\mathrm{emb}(H)\in\{3,4\}$ and $\delta_\chi(H)=0$ & $2/3$ \\
\bottomrule
\end{tabular}
\caption{Representative values in the symmetric case. The graphs $C_5^+[2]$ and $C^*_{10}[3]$ are defined in Section~\ref{subsec: emb} (see~\cref{figure: C_10}).}
\label{tab:sym-intro}
\end{table}

The main new difficulty, compared with the one-color chromatic threshold problem, is to control how high-chromatic pieces attach to the Ramsey-critical \(C_5\)-configuration. The embeddability parameter is designed precisely to encode these possible attachments.

The lower-bound constructions start from extremal red-blue colorings related to the \(C_5\)-coloring of \(K_5\) and insert high-chromatic gadgets into selected parts. The three families \(\C Z_0,\C Z_1,\C Z_2\) record the possible ways these gadgets can appear. These constructions show that each obstruction in the classification is genuine.

For the upper bound, we apply a two-color regularity lemma and study the red and blue reduced graphs, both of which must be triangle-free. We then partition vertices according to their red and blue density profiles toward the clusters. A high-chromatic density class either forces one of the configurations represented by \(\C Z_0,\C Z_1,\C Z_2\), or else the reduced graph satisfies one of a small number of extremal alternatives. The key embedding lemma converts these density patterns into actual monochromatic copies of \(H_1\) or \(H_2\).

\paragraph{Organization.} The rest of the paper is structured as follows. In Section~\ref{sec: pre}, we conduct some preparations and provide several lemmas that will be needed later. In Section~\ref{sec: con}, we explain the classification for $\delta_{\chi}(H_1,H_2)$ and give the construction of the  lower bound. In Section~\ref{sec: core}, we prove a key lemma (\cref{lmm:embed key}). In Section~\ref{sec: prof}, we prove our main theorem. Concluding remarks are given in \cref{sec:rmk}.

\section{Tools and auxiliary results}\label{sec: pre}

\paragraph{Notations.} All the graphs involved in this paper are simple graphs. For the sake of clarity of presentation, we omit floor and ceiling signs whenever they are inessential. Let $G$ be a graph. We will denote the set of vertices of $G$ by $V(G)$ and the set of edges by $E(G)$, and define $|G| := |V(G)|$ and $e(G):=|E(G)|$. For a vertex $u\in V(G)$, we use $N(u)$ to denote the set of neighbors of $u$ in $G$, and for a subset $U\subseteq V(G)$, we use $N(U)=\bigcap_{u \in U} N(u)$ to denote the set of \emph{common neighbors} of vertices in $U$. For $X\subseteq V(G)$, we write $N_X(v)=N(v)\cap X$ and $d_X(v)=|N_X(v)|$.   For any graph $G$ that is 2-edge-colored in red and blue,  we will use $G_{r}$ and $G_{b}$ to denote the subgraphs of $G$ restricted to red edges and blue edges, respectively.
Given disjoint sets $X$ and $Y$, we write $K[X,Y]$ for the complete bipartite graph with parts $X$ and $Y$.
For any ordered pair $(X,Y)$, we say $(X,Y)$ is \emph{$\alpha$-dense} for some $\alpha \in [0,1]$ if $|N(x)\cap Y|\ge \alpha |Y|$ for every $x\in X$, and we say $(X,Y)$ is \emph{$\alpha$-pair-dense} for some $\alpha \in [0,1]$ if $|N(\{x,y\})\cap Y|\ge \alpha |Y|$ for every $(x,y)\in X^2$.
Let $G,H$ be two graphs and let $V_1,\dots V_k$ be subsets of $V(G)$, $U_1,\dots, U_k$ be subsets of $V(H)$. We say $G\subseteq H$ with $(V_1,\dots V_k)\subseteq (U_1,\dots, U_k)$ if $G$ can be embedded into $H$ with $V_i$ embedded into $U_i$ for all $i\in[k]$. The \emph{$t$-blowup} of $H$, denoted by $H[t]$, is obtained by replacing each vertex of $H$ by an independent set of size $t$ and each edge by a copy of $K_{t,t}$.

\medskip

 \L uczak and Thomass\'e \cite{luczak2010coloring} defined a graph $H$ to be \emph{near-acyclic} if $\chi(H) = 3$ and $H$ admits a partition into a forest $F$ and an independent set $S$ such that every odd cycle of $H$ meets $S$ in at least two vertices.
 Equivalently, for each tree $T$ in $F$ with color classes $V_1 (T)$ and $V_2(T)$, there is no vertex of $S$ with neighbors in both $V_1 (T)$ and $V_2(T)$. For instance, all odd cycles are near-acyclic.

Given a graph $H$ with $\chi(H)=r \geq 3$, the \emph{decomposition family} $\mathcal{M}(H)$ of $H$ is the set of all bipartite graphs that can be obtained from $H$ by deleting $r-2$ color classes in some proper $r$-coloring of $H$. For example, $\mathcal{M}(K_{2,2,2})=\{C_4\}$.

First, we state the following characterization in \cite{allen2013chromatic} for graphs with chromatic number 3.

\begin{theorem}[\cite{allen2013chromatic}]\label{thm:characterization}
Let $H$ be a graph with $\chi(H)=3$.

\begin{itemize}
\item If $\mathcal{M}(H)$ contains no forest, then $\delta_\chi(H)=1/2$.
\item If $H$ is near-acyclic, then $\delta_\chi(H)=0$.
\item Otherwise, $\delta_\chi(H)=1/3$.
\end{itemize}

\end{theorem}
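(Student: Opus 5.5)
Since \cref{thm:characterization} is quoted from~\cite{allen2013chromatic}, we only outline how its proof goes. It splits into three upper bounds, $\delta_\chi(H)\le 1/2$ always, $\le 1/3$ when $\mathcal M(H)$ contains a forest, and $0$ for near-acyclic $H$, together with matching lower-bound constructions.

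\textbf{Lower bounds.} Suppose $\mathcal M(H)$ contains no forest. Let $A$ be a small set carrying a graph of girth larger than $|H|$ and arbitrarily large chromatic number (Erd\H{o}s). Add an independent set $B$ of size about $n/2$ joined completely to $A$. Add a further independent set $B'$ of size about $n/2$, joined completely to $B$ and not to $A$, so that the minimum degree is close to $n/2$. Then $G[A\cup B]$ has huge chromatic number.

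We claim $G$ is $H$-free. In any copy of $H$, the vertices in $A$ span a subgraph of a graph of girth exceeding $|H|$, hence a forest. The vertices in $B$ form an independent set, and so do those in $B'$. Since $B'$ has no neighbours in $A$, we can 2-colour the forest, put $B$ as one class, and merge $B'$ into a forest class. This gives a proper $3$-colouring of $H$ with an independent class whose deletion leaves a forest. That forest lies in $\mathcal M(H)$, a contradiction.

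If $H$ is not near-acyclic, one uses a Hajnal-type construction. Take a Kneser graph $KG(2m+k,m)$, which has chromatic number $k+2$ by Borsuk--Ulam. Attach it to a large complete bipartite graph in the standard way, giving minimum degree about $n/3$. The point to check is that every copy of a $3$-chromatic graph in this construction has an odd cycle meeting the independent part in at most one vertex. This is exactly the negation of near-acyclicity, so the construction is $H$-free.

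\textbf{Upper bounds.} First, $\delta_\chi(H)\le 1/2$. Take $\delta(G)\ge(1/2+\eps)n$ and apply the regularity lemma. Classify vertices by which clusters they are dense to; there are boundedly many classes. Within a class, two adjacent vertices have common dense clusters forming an edge of the reduced graph. Embedding a blow-up then yields $H$, so each class is independent and $\chi(G)$ is bounded.

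Second, for $\delta(G)\ge(1/3+\eps)n$ with a forest in $\mathcal M(H)$, the same profile classes are used. One shows that an edge inside a class whose common neighbourhood profile spans a dense regular pair allows $H$ to be embedded: the forest goes greedily into the regular pair, and the remaining class goes into a common neighbourhood. Hence each class is almost independent, and a paired-VC-dimension/ $\eps$-net argument bounds the chromatic number.

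Third, the near-acyclic case follows \L uczak and Thomass\'e. Any $H$-free graph of linear minimum degree has bounded paired VC-dimension. The $(p,q)$/$\eps$-net machinery then yields a bounded number of neighbourhoods covering $V(G)$, each inducing a graph of bounded chromatic number.

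\textbf{Main obstacle.} The $1/3$ upper bound is the hardest step. It requires converting a high-chromatic profile class into a forest embedding in the regular pairs with only density $1/3$ available, and this needs a careful Zykov-type argument.
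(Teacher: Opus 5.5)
\paragraph{Scope of the comparison.} The paper does not prove this theorem. It quotes it from \cite{allen2013chromatic} and only reuses its ingredients: the Erd\H{o}s-graph construction, the Borsuk--Hajnal graphs of \cref{LT}, and the Zykov-graph lemma \cref{lmm:0-1/2}. Your $1/2$ lower bound is correct and is essentially the paper's construction.

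\paragraph{The $1/3$ lower bound has a genuine gap.} First, the logic is inverted. To conclude that the construction is $H$-free for every non-near-acyclic $H$, you must show that every $3$-chromatic subgraph of bounded order \emph{is} near-acyclic; this is the last bullet of \cref{LT}. Exhibiting odd cycles that meet the independent part only once proves nothing.

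Second, the correctly oriented property is false for the Kneser version of Hajnal's graph. Since $\chi(KG(2m+k,m))=k+2$, you need $k\ge 3$. Take disjoint $(m+1)$-sets $P,Q\subseteq[2m+k]$ and an element $i\notin P\cup Q$. Consider four classes of Kneser vertices, in this order:
\begin{itemize}
\item the $m$-sets containing $i$ and avoiding $P$;
\item the $m$-subsets of $P$;
\item the $m$-subsets of $Q$;
\item the $m$-sets containing $i$ and avoiding $Q$.
\end{itemize}
Consecutive classes are completely joined in the Kneser graph. The part $Y_i$ of the attached bipartite graph is complete to the first and last classes. So the construction contains $C_5[t]$ for all $t\le m+1$.

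In particular it contains $C^*_{10}[3]\subseteq C_5[6]$. The paper shows this graph is not near-acyclic, while $\mathcal M(C^*_{10}[3])$ contains a forest, so it falls exactly under the third bullet. The missing idea is that the high-chromatic part must have girth larger than $|H|$. Kneser graphs already contain $K_{m+1,m+1}$ once $k\ge 2$. This is why \L uczak and Thomass\'e replace the Kneser graph by a sparse Borsuk graph, whose geometry makes all small $3$-chromatic subgraphs near-acyclic.

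\paragraph{The $1/3$ upper bound never uses the forest hypothesis.} Any bounded bipartite graph embeds in a dense regular pair. So placing the forest there, with the third colour class in a common neighbourhood inside the profile class, is just the $K_3[t]$ argument of \cref{claim:noin}. That argument covers only the easy case:
\begin{itemize}
\item Suppose every vertex of a high-chromatic class $X$ has density at most about $1/2$ to each cluster.
\item Then $\delta(G)\ge(1/3+\eps)n$ forces more than $2k/3$ dense clusters, and these must span an edge of $R$.
\item Now \cref{coro: large common} plus regularity gives $K_3[t]\supseteq H$.
\end{itemize}
In the remaining case, some cluster $V_i$ receives density above $1/2$ from $X$, so $(X,V_i)$ is pair-dense. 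The counting then no longer forces an edge of $R$ among the dense clusters. The forest must instead be found \emph{inside} $G[X]$, with $t$ common neighbours in $V_i$, via the Zykov-type lemma \cref{lmm:0-1/2}. This is exactly the step you call the main obstacle but do not supply. Also, ``each class is almost independent'' does not follow from what you wrote.

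\paragraph{The remaining upper bounds.} The same lemma, together with \cref{fact:zykov-universal}, drives the near-acyclic case. Your claims that $H$-freeness bounds the paired VC-dimension, and that the covering neighbourhoods have bounded chromatic number, are asserted without justification. Finally, $\delta_\chi(H)\le 1/2$ needs no regularity: by Erd\H{o}s--Stone, $H$-free graphs with $\delta(G)\ge(1/2+\eps)n$ have bounded order. Your version also conflates being dense to the same clusters with having common neighbours.
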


For any $k\in \mathbb{N}$, define $\mathcal{F}_k$ as the \emph{universal forest} with parameter $k$, i.e., the disjoint union of all non-isomorphic trees on $k$ vertices. Denote $\mathcal{F}^t_k$ the vertex-disjoint union of $t$ copies of $\mathcal{F}_k$. For a graph $G$, let $c(G)$ be the number of its components. 

We now define an important family of graphs, Zykov graphs, which are universal graphs for near-acyclic graphs.

 \begin{defn}[Zykov Graph]\label{def:k-t-Zykov}
  Let $k,t\in \mathbb{N}$ and $c=c(\mathcal{F}^t_k)$.
  Let $T_1, \ldots, T_{c}$ be the components of $\mathcal{F}^t_k$. 
  For every $j\in[c]$, let $T_j$ have bipartition $A^0_j \cup A^1_j$. 
  We define $Z_k^t= Z_k^t(A,U)$ to be the \emph{$(k,t)$-Zykov graph} on vertex set $A\cup U$, where
\begin{align*}
  A:=\bigcup_{j\in[c]} (A^0_{j} \cup A^1_{j}),\quad  U:=\{u^{i}_I: i\in[t] \text{ and } I \subseteq [c]\}
\end{align*}
and with edge set
\begin{align*}
  E(Z_k^t):=\bigcup_{j\in[c]}\Big(E(T_j) \cup \bigcup_{\substack{i\in [t] \\ j\in I\subseteq [c]}} K(\{u^{i}_I\}, A^0_j) \cup \bigcup_{\substack{i\in [t] \\ j\notin I\subseteq[c]}} K(\{u^{i}_I\}, A^1_j)\Big) .
\end{align*}
\end{defn}

For simplicity, for any $\bm{x} = (x_1,\dots, x_c)\in \{0,1\}^c$ we write 
\[
A^{\bm{x}}=\bigcup_{i=1}^c A^{x_i}_i.
\]

\begin{prop}[\cite{allen2013chromatic}]\label{fact:zykov-universal} For any graph $H$ with $\chi(H)=3$, the following are equivalent:
\begin{itemize}
\item[\rm (i)]  $H$ is near-acyclic.
\item[\rm (ii)] there exist $k,t \in \mathbb{N}$ such that $H$ is a subgraph of $Z_k^t$.
\item[\rm (iii)] $\delta_\chi(H)=0$.
\end{itemize}
\end{prop}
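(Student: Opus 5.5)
The statement is Proposition~\ref{fact:zykov-universal}, which the paper quotes from \cite{allen2013chromatic}. I would prove it through the cycle (i)$\Rightarrow$(ii)$\Rightarrow$(iii)$\Rightarrow$(i).

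\textbf{(i)$\Rightarrow$(ii).} Let $H$ be near-acyclic, with partition $V(H)=F\cup S$, where $F$ induces a forest and $S$ is independent. Let $T_1,\dots,T_m$ be the trees of $F$, and choose $k\ge |H|$. Padding each $T_j$ by leaves if needed, every $T_j$ is isomorphic to a component of $\mathcal{F}_k$. Take $t\ge |S|$, so the trees of $H$ can be placed in distinct components of $\mathcal{F}^t_k$ (here $t\ge m$ suffices; I will just take $t\ge |H|$). By the near-acyclic property, each $s\in S$ has neighbours in $T_j$ lying in only one colour class, say side $\epsilon_j(s)\in\{0,1\}$. Since $Z_k^t$ contains every choice $u^i_I$, we can pick $I$ so that $u^i_I$ is adjacent to the whole of side $\epsilon_j(s)$ of the image of $T_j$ for every $j$. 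Using distinct indices $i$ keeps the images of distinct vertices of $S$ distinct. This gives an embedding, because $u^i_I$ is complete to $A^{\epsilon}_j$, which contains the required neighbourhood.

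\textbf{(ii)$\Rightarrow$(iii).} Since $H\subseteq Z_k^t$ and $\delta_\chi$ is monotone under subgraphs, it suffices to show $\delta_\chi(Z_k^t)=0$. This is the main obstacle; it is the Łuczak--Thomassé paired VC-dimension argument. Let $G$ be $Z_k^t$-free with $\delta(G)\ge dn$.
\begin{enumerate}[(a)]
\item Take a random sample $X$ of bounded size. For each vertex $v$, record the trace of $N(v)$ on the family of $k$-vertex trees in $G[X]$, using the bipartition sides.
\item Argue that if $G$ had unbounded chromatic number, one could find trees $T_1,\dots,T_c$ in $G$ and $t$ vertices realizing every pattern $I\subseteq[c]$. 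For each such vertex the neighbourhood must contain one side of every tree, selected by $I$. This is exactly $Z^t_k$, a contradiction.
\item Therefore the family of patterns has bounded ``paired VC-dimension''. A Sauer--Shelah/$\epsilon$-net type argument then partitions $V(G)$ into boundedly many classes.
\item Each class is independent or bipartite, because the high minimum degree forces the neighbourhoods of vertices in the same class to cover a common side.
\end{enumerate}

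\textbf{(iii)$\Rightarrow$(i).} Suppose $H$ is not near-acyclic. I would use Borsuk--Hajnal-type constructions: dense graphs (say $\delta\ge (1/3-\epsilon)n$ or at least $cn$) with large chromatic number in which every short odd cycle meets a specified structure in a bounded way. Attaching a Kneser/Borsuk graph gadget to a complete bipartite blowup gives a graph of minimum degree linear in $n$ and unbounded chromatic number. Any copy of $H$ inside it would induce a forest-plus-independent-set partition violating the odd-cycle condition. Hence $\delta_\chi(H)>0$. For this direction I expect to rely directly on the construction in \cite{allen2013chromatic}.
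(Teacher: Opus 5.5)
\textbf{Assessment.} Your proof of (i)$\Rightarrow$(ii) is correct and complete. Your (iii)$\Rightarrow$(i) is fine as a citation of \cref{LT}: its last property says that every $3$-chromatic subgraph of $\mathrm{BH}$ on fewer than $\ell$ vertices is near-acyclic. So a non-near-acyclic $H$ does not embed, and $\delta_\chi(H)\ge 1/3$.

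\textbf{The gap.} It is in (ii)$\Rightarrow$(iii), i.e.\ $\delta_\chi(Z_k^t)=0$. This is the entire substance of the proposition (the \L uczak--Thomass\'e conjecture settled in \cite{allen2013chromatic}). The paper does not prove the proposition; it quotes it, just as it imports \cref{lmm:0-1/2}. Your steps (a)--(d) do not amount to an argument:
\begin{itemize}
\item Step (b) is circular. Extracting trees $T_1,\dots,T_c$ and $t$ vertices for every pattern $I$ from large chromatic number \emph{is} the assertion $Z_k^t\subseteq G$ that you are trying to prove.
\item Step (c) invokes a tool that is not available off the shelf. $Z_k^t$-freeness bounds only a \emph{paired} VC-dimension (shattering relative to the two sides of trees). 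Sauer--Shelah and the $\varepsilon$-net theorem need the ordinary VC-dimension of the neighbourhood system to be bounded, and it need not be. Bipartite graphs are $Z_k^t$-free, since $Z_k^t\supseteq H$ is not bipartite. Yet take $K_{n/2,n/2}$ and, between a $D$-set on one side and a $2^D$-set on the other, keep only the incidences of $[D]$ versus $2^{[D]}$. This graph has linear minimum degree, and its neighbourhoods shatter a $D$-set.
\item Step (d) is unsupported. Vertices that share a neighbour, or that are all complete to a common side of a tree, form an independent set only in a triangle-free graph. That is exactly why the \L uczak--Thomass\'e colouring works for $K_3$. $Z_k^t$-free graphs of linear minimum degree need not be triangle-free, so nothing in your outline bounds the chromatic number of a class.
\end{itemize}

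\textbf{What is actually needed.} There is a routine reduction: split $V(G)$ at random, let $X$ be the half of larger chromatic number and $Y$ the other half, so every vertex of $X$ has linear degree into $Y$. After that, you must prove the following. If $X$ has huge chromatic number and all its vertices have linear degree into $Y$, then $G[X]$ contains vertex-disjoint copies $T_1,\dots,T_c$ of the components of $\mathcal F_k^t$, and for every $I\subseteq[c]$ there are $t$ vertices of $Y$ complete to the sides selected by $I$. This requires a genuine paired analogue of the VC-dimension/$\varepsilon$-net machinery, which is a substantial part of \cite{allen2013chromatic} (building on \cite{luczak2010coloring}). Either cite it for this direction, as the paper does, or supply that argument in full. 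As written, your proposal establishes only (i)$\Rightarrow$(ii) and, by citation, (iii)$\Rightarrow$(i).
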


The following simple fact follows straightforwardly from the definition of Zykov graphs. We omit its proof.

\begin{fact}\label{fact:embde C5}
Let $G$ be the graph obtained from $Z^t_k$ on $(A,U)$ by adding a new independent set $S$ and all edges between $S$ and $U$. Then $G \subseteq Z^{t'}_{k}$ for some $t'>t$.
\end{fact}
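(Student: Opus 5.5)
To prove Fact~\ref{fact:embde C5}, I will construct an explicit embedding of $G$ into a larger Zykov graph $Z^{t'}_k$ that keeps the forest part fixed and only uses more vertices of type $u$. In $Z^t_k$ every $u^i_I$ is adjacent to exactly $A^{\bm x}$ for one vector $\bm x$, namely $x_j=0$ if $j\in I$ and $x_j=1$ otherwise; this $\bm x$ determines $I$. The vertices of $U$ are pairwise non-adjacent and have neighbours only in $A$.

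\textbf{Choice of $t'$ and of the embedding.} Let $s=|S|$ and put $t'=t(s+1)$. The forest $\mathcal F^{t'}_k$ consists of $s+1$ disjoint copies of $\mathcal F^t_k$, indexed $0,1,\dots,s$. The components of $Z^{t'}_k$ are therefore indexed by pairs (block $\ell$, component $j\in[c]$), and $c'=(s+1)c$. Take the original $Z^t_k$ on the block-$0$ forest. For each $u^i_I$, choose $I'\subseteq[c']$ that agrees with $I$ on block $0$ and is arbitrary elsewhere, say containing all of the other blocks. Then $u^{i}_{I'}$ has the correct adjacencies into block $0$, and we map $u^i_I$ to it.

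\textbf{Embedding $S$.} For the $m$-th vertex of $S$ ($m=1,\dots,s$), use block $m$. In $Z^{t'}_k$, a vertex $u^{i}_{J}$ is adjacent to the part $A^{\bm x}$ of block $m$, where $\bm x$ records which components of block $m$ lie in $J$. To make the $m$-th vertex $w_m$ of $S$ adjacent to all images of $U$, I need a vertex of block $m$ adjacent to every chosen $u^i_{I'}$.

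\begin{itemize}
\item Fix one component of block $m$, say component $1$, and choose all the $I'$ to contain $(m,1)$.
\item Then each $u^i_{I'}$ is adjacent to all of $A^0$ of that component.
\item Map $w_m$ to any vertex of $A^0_{(m,1)}$.
\end{itemize}

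This choice is consistent with requiring $I'$ to agree with $I$ on block $0$, since block $m$ and block $0$ are disjoint.

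\textbf{Checking it is a subgraph embedding.} The map is injective, because distinct $(i,I)$ give distinct $(i,I')$ and the images of $S$ lie in distinct blocks. The edges of $Z^t_k$ are preserved by the choice on block $0$. The edges $S$--$U$ are preserved by the construction above. We need only a subgraph, not an induced one, so extra adjacencies are harmless; in particular it does not matter that $S$ is independent or that its images touch other forest edges.

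\textbf{Main subtlety.} The one point to watch is that the new vertices of $S$ must be sent into $A$, not $U$, because $U$-type vertices have no common neighbours in $U$. The extra forest blocks exist exactly to supply these vertices, and they are what force $t'>t$.
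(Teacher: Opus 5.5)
Your proof is correct. The paper omits the argument, saying it follows straightforwardly from the definition, and your explicit embedding is exactly such a verification: you place the original $Z^t_k$ on one block of $\mathcal F^{t(s+1)}_k$, send each vertex of $S$ to the $A^0$-side of a component in its own extra block, and choose every $I'$ to contain those components. The only points left implicit are trivial: for $S=\varnothing$ any $t'>t$ works, and when $k=1$ you use whichever side of the component is nonempty.
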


We also need the following embedding lemma. It can be easily deduced from Lemmas 10, 24 and 25, and Propositions 26 and 36 in \cite{allen2013chromatic}. 
Readers can refer to Lemma A.3 (with $r=4$ and $(X,Y,Z_1)_{\mathrm{A.3}}=(X,Y,Z)$) in \cite{liu-shangguan-wu-xue} for a detailed proof.

\begin{lemma}\label{lmm:0-1/2}
     Let $k,t$ be two positive integers.
    For any $\delta >0$, there exist a constant $C$ and a sufficiently large integer $n$ such that
    for any  graph $G$, and three pairwise disjoint vertex sets $X,Y,Z$ in $G$ with $\chi(G[X]) \ge C$ and $|Y|=|Z|=n$, if $(X,Y)$ is $\delta$-dense and $(X,Z)$  is $\delta$-pair-dense,
    then there exist a copy of $Z^t_k(A,U)$ with $(A,U) \subseteq(X,Y)$ and a set $S \subseteq Z$ with $|S| \ge t$ in $G$ such that $G[A,S]$ is a complete bipartite graph. 
    
    In particular, for any $H$ with $\chi(H)=3$, if $\delta_{\chi}(H)=1/3$, then $H\subseteq G[X\cup Z]$ and if $\delta_{\chi}(H)=0$, then $H\subseteq G[X\cup Y]$ .
\end{lemma}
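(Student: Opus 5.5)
The statement immediately above is Lemma~\ref{lmm:0-1/2}, which the paper says follows from \cite{allen2013chromatic}. I would prove it in two stages: first a dependent-random-choice/pigeonhole argument that builds the Zykov structure inside $X\cup Y$, and then an augmentation that adds the set $S\subseteq Z$.

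\textbf{Stage 1: build $Z^t_k(A,U)$ with $(A,U)\subseteq(X,Y)$.} Since $\chi(G[X])\ge C$ with $C$ huge, I would pass to a subgraph $X'\subseteq X$ of large minimum degree, say at least $k$. Greedily this contains many vertex-disjoint copies of every tree on $k$ vertices, so it contains a copy of $\mathcal F^t_k$, and in fact of arbitrarily many disjoint copies. The Zykov graph additionally needs, for every $i\in[t]$ and every $I\subseteq[c]$, a vertex $u^i_I\in Y$ adjacent to $A^0_j$ for $j\in I$ and to $A^1_j$ for $j\notin I$. The plan is the standard one from \cite{allen2013chromatic}:
\begin{itemize}
\item Use $\delta$-density of $(X,Y)$ to assign to each $x\in X$ its neighbourhood $N_Y(x)$, of size at least $\delta n$.
\item Partition $Y$ into a bounded number of pieces, and colour $X$ by the \enquote{type} of these neighbourhoods, i.e.\ which pieces $x$ sees substantially.
\item If $G[X]$ had no appropriate configuration, colour $X$ by these types plus a bounded recursion. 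This would give $\chi(G[X])<C$, a contradiction.
\end{itemize}
Concretely, high chromatic number forces a tree component whose two colour classes have large common neighbourhoods in $Y$, and these supply the vertices $u^i_I$ for all $2^c$ choices of $I$. The cleanest route is to cite Lemmas 10, 24, 25 of \cite{allen2013chromatic}, which give exactly that a graph of huge chromatic number, all of whose vertices have $\delta$-dense neighbourhoods in a common set $Y$, contains $Z^t_k$ with $U$ in $Y$.

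\textbf{Stage 2: add $S\subseteq Z$.} Use $\delta$-pair-density of $(X,Z)$: any two vertices of $X$ have at least $\delta n$ common neighbours in $Z$. I would strengthen Stage 1 by first refining $X$ by a bounded colouring according to which parts of a fixed partition of $Z$ each $x$ sees densely. By Ramsey/pigeonhole, some colour class $X''$ still has huge chromatic number, and for all $x\in X''$ the neighbourhoods in $Z$ concentrate on common structure. The aim is that any bounded set $A\subseteq X''$ has at least $t$ common neighbours in $Z$. Pair-density only controls pairs, so I would use a Kővári–Sós–Turán / dependent random choice counting argument: $|A|$ is bounded by a function of $k,t$, and $n$ is large. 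This step, passing from pairwise common neighbourhoods to common neighbourhoods of all of $A$, is the main obstacle. The first thing I would try is to choose the vertices of $A$ inside a high-chromatic subset where all pairs share a fixed linear-size subset of $Z$, obtained by iterating the pigeonhole over a bounded family of candidate subsets. This is essentially Propositions 26 and 36 of \cite{allen2013chromatic}; \cite{liu-shangguan-wu-xue}, Lemma A.3, carries it out.

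\textbf{The \enquote{in particular} part.} If $\delta_\chi(H)=0$, Proposition~\ref{fact:zykov-universal} gives $H\subseteq Z^t_k\subseteq G[X\cup Y]$. If $\delta_\chi(H)=1/3$, then by \cref{thm:characterization} $\mathcal M(H)$ contains a forest, so $H$ has a partition into a forest $F$ plus two independent sets. I would then embed as follows:
\begin{itemize}
\item The forest goes into a tree component of $A$ (a large universal forest contains it).
\item One independent class goes into $S$, using completeness of $G[A,S]$.
\item The other independent class goes via Zykov vertices into $U$.
\end{itemize}
For the target $H\subseteq G[X\cup Z]$, though, I would instead use $A\cup S$ only: take the forest $F$ in $A\subseteq X$ and the remaining independent set in $S\subseteq Z$. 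Here $A$ is complete to $S$, so every edge between them is present. The edges between the two $H$-classes inside the forest are realised because $\mathcal F_k$ contains every tree, once $k\ge|H|$.
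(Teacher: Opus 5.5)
Your proposal ends up where the paper does: the paper gives no argument beyond deferring to Lemmas 10, 24, 25 and Propositions 26, 36 of \cite{allen2013chromatic} (with Lemma A.3 of \cite{liu-shangguan-wu-xue} for details). Your derivation of the ``in particular'' clause is correct: take a colour class $I$ with $H-I$ a forest, place the forest in the copy of $\mathcal F^t_k$ on $A$ and $I$ in $S$, with $k,t\ge |H|$.

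Do not, however, treat your Stage~1/Stage~2 sketches as proofs; in particular, pair-density does not give a high-chromatic part of $X$ with a shared neighbourhood in $Z$. Take $G[X]=KG(2r+m,r)$, $Z$ the $2$-subsets of $[2r+m]$, and $x\sim Q$ iff $x\cap Q\neq\varnothing$. For $r\gg m$ this is nearly $\tfrac12$-pair-dense and $\chi(G[X])=m+2$, yet every $Q\in Z$ has a $2$-colourable neighbourhood in $X$. So $S$ must be found jointly with $A$, and that is precisely what you are importing from the citation.
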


We will utilize the multicolor regularity lemma. Let $X$, $Y$ be two disjoint subsets of $V$, we define the density of the pair $(X,Y)$ as
   \begin{equation*}
     d(X,Y):=\frac{e(X,Y)}{|X|\cdot |Y|},
   \end{equation*}
   and we call a pair of vertex sets $X$ and $Y$ \emph{$\varepsilon$-regular}, if for all subsets $A\subseteq X$, $B\subseteq Y$ satisfying $|A|\geq \varepsilon|X|$, $|B|\geq \varepsilon|Y|$, we have
   \begin{equation*}
   \left|\frac{e(A,B)}{|A|\cdot|B|}-\frac{e(X,Y)}{|X|\cdot|Y|}\right|=|d(X,Y)-d(A,B)| \leq \varepsilon.
\end{equation*}
Moreover, if $(X,Y)$ is \emph{$\varepsilon$-regular} and $d(X,Y)\ge d$, the pair $(X,Y)$ is called \emph{$(\varepsilon,d)$-regular}. A partition of $V$ into $k+1$ sets $(V_0,V_{1},\ldots,V_{k})$ is called an \emph{$\varepsilon$-regular partition}, if $|V_0|<\varepsilon n$ and for all $1\leq i<j\leq k$ we have $|V_{i}|=|V_{j}|$, and all except $\varepsilon k^{2}$ of the pairs $(V_{i},V_{j})$, $1\leq i<j\leq k$, are $\varepsilon$-regular.

   We can now state the multicolor version of the Szemer\'{e}di regularity lemma.

\begin{theorem}[\cite{komlos1995szemeredi}]\label{thm:regularity}
   For any $\varepsilon>0$ and integers $r, \kappa$ there exists an $M$ such that if the edges of a graph $G$ are $2$-edge-colored in red and blue, then the vertex set $V(G)$ can be partitioned into sets $V_0, V_1, \ldots, V_k$, for some $\kappa \leq k \leq M$, so that $(V_0,V_{1},\ldots,V_{k})$ is an $\varepsilon$-regular partition with respect to both $G_{r}$ and $G_{b}$.
\end{theorem}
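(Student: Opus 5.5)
This is the multicolour regularity lemma of Koml\'os and Simonovits, which the paper cites rather than proves. I would deduce it from the standard proof of Szemer\'edi's lemma by the energy-increment method, run on both colour classes at once.

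\textbf{Setup.} For a partition $\mathcal P=(V_0,V_1,\dots,V_k)$ define the combined index
\[
q(\mathcal P)=\sum_{1\le i<j\le k}\frac{|V_i||V_j|}{n^2}\Bigl(d_r(V_i,V_j)^2+d_b(V_i,V_j)^2\Bigr),
\]
where $d_r$ and $d_b$ are the densities in $G_r$ and $G_b$. The exceptional set $V_0$ is handled as in the usual proof. Since each density lies in $[0,1]$, we have $q(\mathcal P)\le 2$. Both the red and the blue index are non-decreasing under refinement, by Cauchy--Schwarz (convexity of $x\mapsto x^2$).

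\textbf{Refinement step.} Start from an arbitrary equipartition into $\kappa$ classes plus a small $V_0$. Suppose the current partition is not $\varepsilon$-regular with respect to both colours simultaneously. Then more than $\varepsilon k^2$ pairs are irregular in at least one colour. So for one of the two colours, say red, at least $\varepsilon k^2/2$ pairs are irregular in $G_r$.

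For each irregular pair take its witness sets $A\subseteq V_i$, $B\subseteq V_j$, with $|A|\ge\varepsilon|V_i|$, $|B|\ge\varepsilon|V_j|$, and densities differing by more than $\varepsilon$ in that pair's colour. The witnesses may come from red for some pairs and blue for others; that is fine. Refine every $V_i$ by the atoms of the Venn diagram of all witness sets lying in it, giving at most $2^{k}$ atoms per class.

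\textbf{Index increment.} The standard defect-form Cauchy--Schwarz computation now applies separately in each colour. The irregular pairs of the chosen colour raise that colour's index by at least $c\,\varepsilon^5$. The other colour's index does not decrease. Hence $q$ increases by at least $c\,\varepsilon^5$, with $c$ replaced by $c/2$ to account for only half the irregular pairs being in the chosen colour.

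Then re-equalise: chop the atoms into equal pieces of size about $|V_i|/4^k$ and throw the leftovers into $V_0$. This loses only $O(\varepsilon^5)$ in each index and adds at most $n/2^k$ to $V_0$, exactly as in Szemer\'edi's original argument.

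\textbf{Termination and bound.} Since $q\le 2$, the process stops after at most $O(\varepsilon^{-5})$ steps. The number of classes after each step is bounded by a tower-type function of the previous count. Iterating gives $M=M(\varepsilon,\kappa)$, which is independent of $n$, and the final partition is $\varepsilon$-regular for $G_r$ and $G_b$ simultaneously. The argument is unchanged for any number of colours; as stated, the lemma uses two.

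\textbf{Main obstacle.} The only delicate point is the bookkeeping of the exceptional set $V_0$: over $O(\varepsilon^{-5})$ rounds its size must stay below $\varepsilon n$. This is handled, as in the one-colour proof, by taking $\kappa$ large enough at the start, since then the total added to $V_0$, $\sum_s n/2^{k_s}$, is small.
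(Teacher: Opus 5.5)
Your proposal is correct: it is the standard Szemer\'edi energy-increment argument run on the summed red and blue indices, which is the usual way the cited Koml\'os--Simonovits multicolour lemma is proved, and the paper gives only the citation, no proof of its own. One simplification: the pigeonhole step to a single colour is unnecessary, since every pair irregular in some colour gains at least $\varepsilon^4|V_i||V_j|/n^2$ in that colour's index once its witness sets are refined, while the other colour's term cannot drop, so the combined index already rises by about $\varepsilon^5$ no matter which colours the witnesses come from.
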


  Given a 2-edge-colored graph $G$ and an $\varepsilon$-regular partition $V_0 \cup V_1 \cup \cdots \cup V_k$ of $V(G)$ and $0<d<1$, we define graphs $R_r$ and $R_b$, called the \emph{$(\varepsilon, d)$-reduced graphs} of $G$, with  $V(R_r)=V(R_b)=[k]$, where for any two different vertices $i,j$, $i j \in E(R_r)$ ($E(R_b)$, respectively) if and only if $(V_i, V_j)$ is an $(\varepsilon, d)$-regular pair in $G_r$ ($G_b$, respectively).

\begin{theorem}[Embedding Lemma \cite{komlos1995szemeredi}]\label{thm:embedding-lemma}
  For all $d \in(\varepsilon,1]$ and $\Delta \geq 1$ there exists an $\varepsilon_0>0$ with the following property. 
  Suppose that a graph $G$ has an $\varepsilon$-regular partition $\{V_0, \ldots, V_k\}$ with $|V_1|=\cdots=|V_k|=m$ and the $(\varepsilon, d)$-reduced graph $R$, where $\varepsilon \leq \varepsilon_0$ and $m d^{\Delta} \geq 2 s$ for some integer $s \geq 1$. 
  Assume $H\subseteq R[s]$ is a subgraph of the $s$-blowup of $R$ with $\Delta(H) \leq \Delta$. Then $H\subseteq G$.
\end{theorem}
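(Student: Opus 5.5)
The plan is the standard greedy embedding with candidate sets, as in Koml\'os--Simonovits~\cite{komlos1995szemeredi}. Since $H\subseteq R[s]$, fix a map $\varphi:V(H)\to V(R)=[k]$ with two properties: every edge $uv$ of $H$ goes to an edge $\varphi(u)\varphi(v)$ of $R$, and every $i\in[k]$ has $|\varphi^{-1}(i)|\le s$. Order $V(H)=\{v_1,\dots,v_h\}$ arbitrarily. We embed the vertices one at a time, $v_j\mapsto x_j\in V_{\varphi(v_j)}$, keeping the images distinct. For every vertex $w$ not yet embedded we maintain a candidate set $C(w)\subseteq V_{\varphi(w)}$, which is initially all of $V_{\varphi(w)}$. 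The invariant is that $C(w)$ is exactly the set of vertices of $V_{\varphi(w)}$ adjacent to all already-chosen images of neighbours of $w$.

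\textbf{Step 1 (choosing $\varepsilon_0$).} Take $\varepsilon_0=\varepsilon_0(d,\Delta)$ small enough that
\[
(d-\varepsilon)^\Delta-\Delta\varepsilon\ \ge\ d^\Delta/2
\qquad\text{and}\qquad
(d-\varepsilon)^\Delta\ge\varepsilon
\]
for all $\varepsilon\le\varepsilon_0$. This is possible because both inequalities hold strictly at $\varepsilon=0$ and the left-hand sides are continuous in $\varepsilon$.

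\textbf{Step 2 (one embedding step).} Suppose $v_j$ is to be embedded. Call $x\in C(v_j)$ \emph{bad for} a not-yet-embedded neighbour $w$ of $v_j$ if
\[
|N(x)\cap C(w)|<(d-\varepsilon)|C(w)|.
\]
By the invariant below, $|C(w)|\ge\varepsilon m$. The pair $(V_{\varphi(v_j)},V_{\varphi(w)})$ is $(\varepsilon,d)$-regular, so if the bad vertices formed a set of size at least $\varepsilon m$, that set together with $C(w)$ would have density below $d-\varepsilon$. This contradicts regularity, so fewer than $\varepsilon m$ vertices are bad for each such $w$. Since $v_j$ has at most $\Delta$ neighbours, at most $\Delta\varepsilon m$ vertices of $C(v_j)$ are bad for some neighbour. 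In addition, at most $s-1$ vertices of $V_{\varphi(v_j)}$ are already used as images, because only vertices of $\varphi^{-1}(\varphi(v_j))$ land in that cluster. We pick any remaining $x_j$ and replace $C(w)$ by $C(w)\cap N(x_j)$ for every unembedded neighbour $w$ of $v_j$.

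\textbf{Step 3 (the invariant).} Each $C(w)$ is shrunk at most $\Delta$ times, and each time by a factor of at least $d-\varepsilon$. Hence throughout
\[
|C(w)|\ge (d-\varepsilon)^\Delta m\ge\varepsilon m,
\]
which justifies the regularity application in Step 2. When $v_j$ itself is embedded,
\[
|C(v_j)|-\Delta\varepsilon m-(s-1)\ \ge\ \bigl((d-\varepsilon)^\Delta-\Delta\varepsilon\bigr)m-s+1\ \ge\ \tfrac{d^\Delta}{2}m-s+1\ \ge\ 1,
\]
where the last inequality uses $md^\Delta\ge 2s$. So a valid choice always exists. Adjacency of the final images is guaranteed by the candidate-set invariant, and therefore $H\subseteq G$.

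The only delicate point is the choice of $\varepsilon_0$ in Step 1. It has to be made independently of $s$, $k$ and $|H|$, so that the shrinking of the candidate sets, the bad vertices and the at most $s$ collisions inside a cluster are all absorbed by the slack coming from the hypothesis $md^\Delta\ge 2s$. The rest of the argument is routine.
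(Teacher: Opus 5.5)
Your proof is correct, and it is essentially the same approach as the source: the paper does not reprove this lemma but cites Koml\'os--Simonovits, whose proof is exactly this greedy embedding with shrinking candidate sets, discarding fewer than $\Delta\varepsilon m$ vertices that are bad for some unembedded neighbour and at most $s-1$ already-used vertices, and your choice of $\varepsilon_0$ (so that $(d-\varepsilon)^\Delta-\Delta\varepsilon\ge d^\Delta/2$) correctly adapts it to the hypothesis $md^\Delta\ge 2s$. Your second condition $(d-\varepsilon)^\Delta\ge\varepsilon$ is redundant, since it already follows from the first.
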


We state one more useful fact about subpairs of $(\varepsilon, d)$-regular pairs. 

\begin{fact}[Slicing lemma~\cite{komlos1995szemeredi}]\label{fact:slicinglemma} 
    Let $(U, W)$ be an $(\varepsilon, d)$-regular pair and suppose that $U' \subseteq U$, $W' \subseteq W$ satisfy $|U'| \geq \alpha|U|$ and $|W'| \geq \alpha|W|$. Then $(U', W')$ is $(\varepsilon / \alpha, d-\varepsilon)$-regular.
\end{fact}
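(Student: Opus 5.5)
The plan is to verify the two defining conditions directly from the definition of $\varepsilon$-regularity. Throughout I use the standing convention of~\cite{komlos1995szemeredi} that $\varepsilon\le\alpha\le 1$, which is implicit in the statement. Without it, $\varepsilon/\alpha>1$ and the density bound $d(U',W')\ge d-\varepsilon$ cannot be deduced.

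\emph{Density.} Since $|U'|\ge\alpha|U|\ge\varepsilon|U|$ and $|W'|\ge\varepsilon|W|$, regularity of $(U,W)$ applied to $(U',W')$ gives $|d(U',W')-d(U,W)|\le\varepsilon$. Hence $d(U',W')\ge d-\varepsilon$.

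\emph{Regularity.} Take $A\subseteq U'$ and $B\subseteq W'$ with $|A|\ge(\varepsilon/\alpha)|U'|$ and $|B|\ge(\varepsilon/\alpha)|W'|$. Then
\[
|A|\ge(\varepsilon/\alpha)\,\alpha|U|=\varepsilon|U|,
\]
and likewise $|B|\ge\varepsilon|W|$. So regularity of $(U,W)$ gives $|d(A,B)-d(U,W)|\le\varepsilon$. Combining this with the density step through the triangle inequality yields
\[
|d(A,B)-d(U',W')|\le 2\varepsilon .
\]
This is at most $\varepsilon/\alpha$ whenever $\alpha\le1/2$, which settles that range.

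\emph{The range $\alpha>1/2$ (main obstacle).} Here the triangle inequality falls short of the target $\varepsilon/\alpha<2\varepsilon$. The idea is to compare $d(A,B)$ with $d(U',W')$ directly rather than passing through $d(U,W)$. First write
\[
|U'||W'|\,d(U',W')=e(A,B)+e(A,W'\setminus B)+e(U'\setminus A,W').
\]
Then expand each term on the right around $d(U,W)$, so that $d(U',W')-d(A,B)$ becomes a weighted combination of deviations $d(X,Y)-d(U,W)$. The weights involve $1-|A||B|/(|U'||W'|)$, which is small precisely when $A,B$ are nearly all of $U',W'$. For pieces that are too small to apply regularity directly, use complements inside $U$ and $W$, since $U\setminus U'$ and $W\setminus W'$ have size at most $(1-\alpha)$ times $|U|$ and $|W|$ respectively. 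The goal is to bound the total deviation by $\varepsilon\bigl(1+(1-\alpha)/\alpha\bigr)=\varepsilon/\alpha$.

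I expect the bookkeeping of which sub-pairs are large enough to invoke regularity to be the main difficulty. If it fails, the first fallback is to check whether the cited source intends the bound $\max\{2\varepsilon,\varepsilon/\alpha\}$. For the applications later in the paper only $\alpha$ small is used, and there the argument above already gives exactly the stated $(\varepsilon/\alpha,d-\varepsilon)$-regularity.
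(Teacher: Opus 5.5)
Your density step and your triangle-inequality step are correct. For every $\varepsilon\le\alpha\le 1$ they show that $(U',W')$ is $(\max\{\varepsilon/\alpha,2\varepsilon\},\,d-\varepsilon)$-regular. That is exactly the form in which the Slicing Lemma appears in \cite{komlos1995szemeredi}; the paper gives no proof of its own and quotes the fact without the $2\varepsilon$. The genuine gap is your plan for $\alpha>1/2$. The obstacle is not bookkeeping: the bound $\varepsilon/\alpha$ is false for $\alpha$ close to $1$, so no decomposition can produce it.

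\emph{Counterexample.} Fix $d=1/2$, small $\varepsilon$, $\eta=\varepsilon^2/4$ and $\alpha=1-\eta$.
\begin{itemize}
\item Write $U=U_0\cup A^*\cup R_U$ and $W=W_0\cup B^*\cup R_W$ with $|U|=|W|=N$, $|U_0|=|W_0|=\eta N$ and $|A^*|=|B^*|=\varepsilon N$. Set $U'=U\setminus U_0$ and $W'=W\setminus W_0$.
\item Give the pair block densities $1$ on $U_0\times R_W$ and $R_U\times W_0$, and $c_1=d+\varepsilon$ on $(U_0\cup A^*)\times(W_0\cup B^*)$.
\item Give density $p$ on all remaining blocks, with $p$ chosen so that $d(U,W)=d$; then $0<d-p=O(\varepsilon^2)$.
\item Realize this by a random bipartite graph; the $o(1)$ fluctuations are absorbed by lowering $c_1$ slightly.
\end{itemize}
For $|A|,|B|\ge\varepsilon N$, let $x_0,x_1,x_2$ and $y_0,y_1,y_2$ be the normalized sizes of the intersections of $A$ and $B$ with the three parts. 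We have $x_0,y_0\le\eta$, $x_1+x_2,\ y_1+y_2\ge\varepsilon-\eta$, $c_1-p\ge\varepsilon$, and $x_1y_2+x_2y_1+x_2y_2\ge\frac12\bigl[(x_1+x_2)y_2+x_2(y_1+y_2)\bigr]$. These give
\[
\frac{e(A,B)-c_1|A||B|}{N^2}=(1-c_1)(x_0y_2+x_2y_0)-(c_1-p)(x_1y_2+x_2y_1+x_2y_2)\le (x_2+y_2)\Bigl[\eta-\frac{\varepsilon}{2}(\varepsilon-\eta)\Bigr]\le 0 .
\]
Hence $d-\varepsilon<p\le d(A,B)\le d+\varepsilon$, and $(U,W)$ is $\varepsilon$-regular. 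But $|A^*|=(\varepsilon/\alpha)|U'|$ and $|B^*|=(\varepsilon/\alpha)|W'|$, while
\[
d(A^*,B^*)-d(U',W')=(c_1-p)\Bigl(1-\frac{\varepsilon^2}{\alpha^2}\Bigr)\approx\varepsilon+2\eta(1-d)>\varepsilon+\varepsilon\eta\approx\frac{\varepsilon}{\alpha}.
\]
For $\varepsilon=0.1$ the two sides are about $0.1023$ and $0.10025$.

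This is exactly the mechanism your plan would have to overcome. Strips of width $\eta<\varepsilon$ are seen by regularity only in diluted form, so they may be complete. Deleting them lowers $d(U',W')$ by about $2\eta(1-d)$, which exceeds the extra room $\varepsilon/\alpha-\varepsilon\approx\varepsilon\eta$ that you were hoping to exploit.

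So adopt your fallback as the actual statement: prove $(\max\{\varepsilon/\alpha,2\varepsilon\},\,d-\varepsilon)$-regularity for $\varepsilon\le\alpha\le 1$, which your two steps already do. Nothing downstream breaks. The paper uses the fact only in Claim~\ref{cl:red-edge}, with $\alpha=\alpha(|H_1|,\varepsilon_2)$ small and $\varepsilon_1\ll\alpha$, where $\max\{\varepsilon_1/\alpha,2\varepsilon_1\}=\varepsilon_1/\alpha$.
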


\section{Characterizations and lower bound constructions}\label{sec: con}
The following lemma is the main result of this section, providing the extremal constructions that attain lower bounds in~\cref{thm:main}. We first present in~\cref{subsec: emb} the necessary definitions required to read~\cref{Figure:main0} and~\cref{prop: lower bound}. After providing some building blocks in~\cref{sec:block}, we will give the six types of extremal constructions $G^1,\ldots,G^6$ in~\cref{sec:ext-constr} for~\cref{prop: lower bound} parts 1--6 respectively.

\begin{lemma}\label{prop: lower bound}
Let $H_1,H_2$ be 3-chromatic graphs with  $\mathrm{emb}(H_1)\le \mathrm{emb}(H_2)$.
\begin{enumerate}
    \item If $H_1 \not\hookrightarrow\C Z_0$ and $H_2 \not\hookrightarrow \C C_5$, i.e.~$\mathrm{emb}(H_1,H_2)=(0,\le 3)$, then $\delta_\chi(H_1,H_2)\ge 4/5$.
    \item  If $H_1 \not\hookrightarrow\C Z_1$ and $H_2 \not\hookrightarrow \C C_5$, i.e.~$\mathrm{emb}(H_1,H_2)=( \le 1,\le 3)$, then $\delta_\chi(H_1,H_2)\ge 7/9$.
    \item If $H_1 \not\hookrightarrow\C Z_2$ and $H_2 \not\hookrightarrow \C C_5$, i.e.~$\mathrm{emb}(H_1,H_2)=(\le 2,\le 3)$, then $\delta_\chi(H_1,H_2)\ge 3/4$.
    \item If $\max\{\delta_{\chi}(H_1),\delta_{\chi}(H_2)\}=1/2$, then $\delta_\chi(H_1,H_2)\ge 3/4$.
    \item If $\max\{\delta_{\chi}(H_1),\delta_{\chi}(H_2)\}=1/3$, then $\delta_\chi(H_1,H_2)\ge 5/7$.
    \item If $\max\{\delta_{\chi}(H_1),\delta_{\chi}(H_2)\}=0$, then $\delta_\chi(H_1,H_2)\ge 2/3$. 
\end{enumerate}
\end{lemma}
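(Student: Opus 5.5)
Each of the six parts is proved by an explicit family of $n$-vertex graphs $G^i$ with minimum degree at least $(c_i-o(1))n$, where $c_i$ is the claimed constant, and arbitrarily large chromatic number, together with a red-blue colouring containing no red $H_1$ and no blue $H_2$. The common skeleton is the unique Ramsey-extremal colouring of $K_5$: red edges form a pentagon and blue edges the complementary pentagon. We blow up this $K_5$, so both colour classes are blowups of $C_5$, and then insert high-chromatic gadgets into selected parts. Gadget edges and edges from gadgets to neighbouring parts are coloured so that every red and blue subgraph stays inside the appropriate family.

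Concretely, the red graph should lie in a member of $\C C_5$, $\C Z_2$, $\C Z_1$ or $\C Z_0$ as the case requires, and the blue graph should lie in a member of $\C C_5$. Avoidance then follows directly from the hypotheses on $\operatorname{emb}(H_1)$ and $\operatorname{emb}(H_2)$ and the definitions in Section~\ref{subsec: emb}, since those families are presumably closed under the relevant subgraph and blowup operations.

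\textbf{Parts 1--3.} Here the gadget is a Zykov-type graph of large chromatic number with a small number of vertices, such as a Kneser- or Borsuk-type graph or a $Z_k^t$-like structure. It is placed in one or more parts of the $C_5$ blowup and joined completely, in suitable colours, to the other parts. The part sizes are then optimised so that every vertex, including the negligible gadget vertices, has degree at least $c_i n$.

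For $4/5$ this means a balanced blowup of $K_5$, with degree $4n/5$, into which a sparse high-chromatic piece is grafted; making all red structure lie in $\C Z_0$ is essentially the definition of $\C Z_0$. For $7/9$ and $3/4$ the weights are unequal, and I would solve the linear program for the weights, for example parts of sizes $1/9,2/9,\dots$ or $1/4,\dots$. The families $\C Z_1$ and $\C Z_2$ are by design exactly what the red graph of the corresponding construction looks like.

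\textbf{Parts 4--6.} Here one uses the one-colour extremal constructions of~\cite{allen2013chromatic}. Take an $H$-free graph $B_\eta$ with minimum degree $(\eta-o(1))|B_\eta|$ and unbounded chromatic number, where $\eta\in\{1/2,1/3,0\}$ and $H$ is whichever $H_i$ attains $\eta$. Colour $B_\eta$ entirely in the colour of that $H_i$. Join $B_\eta$ to a blowup of the Ramsey colouring of $K_4$ minus structure, with red and blue both bipartite on the remaining parts, so that no monochromatic odd cycle passes through the outside parts.

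Weighting $B_\eta$ by $x$ and the remaining parts suitably, a vertex of $B_\eta$ gets degree $\eta x+(1-x)$. Balancing this against the degrees of outside vertices should give the following:
\begin{itemize}
\item $x=1/3$ with degree $2/3$ when $\eta=0$;
\item $x=2/7$ with degree $5/7$ when $\eta=1/3$ (check: $2/21+5/7\ne$; the weights need tuning);
\item $x=1/2$ with degree $3/4$ when $\eta=1/2$.
\end{itemize}
Note that $(1/2)(1/2)+1/2=3/4$. I would determine the outside structure as the one that keeps each colour class triangle-free and bipartite on the complement while all vertices outside $B_\eta$ have the same degree.

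The main obstacle is verifying that monochromatic subgraphs spanning both the gadget and the $C_5$ blowup genuinely embed into the stated families, rather than only approximately. This requires matching how the gadget attaches to neighbouring parts with the definitions of $\C Z_0,\C Z_1,\C Z_2$, and checking that Fact~\ref{fact:embde C5}-type absorption handles the extra independent sets. The weight optimisations are routine linear algebra once the attachment pattern is fixed.
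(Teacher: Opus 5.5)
The genuine gap is in parts 2 and 3. You never say what the gadget is, and the attachment you do describe cannot work: a high-chromatic piece ``joined completely, in suitable colours, to the other parts''.

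Any high-chromatic piece in $V_1$ has internal edges, and neither colour for them survives complete joins:
\begin{itemize}
\item If they are blue, the complete blue joins to $V_3$ create blue triangles, and $H_2=K_3$ is a legitimate choice here.
\item If they are red and the piece is completely joined in red to $V_2$ and $V_5$, you are back to the configuration of part 1. Its red graph contains every $H_1\hookrightarrow\C Z_0$, so nothing with $\mathrm{emb}(H_1)\in\{1,2\}$ is excluded.
\end{itemize}

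The missing idea is the Borsuk--Hajnal gadget $\mathrm{BH}^-$ of \cref{LT}. It is a set $U'$ of $o(n)$ vertices spanning a graph of large girth and large chromatic number, each vertex of which is joined in red to only about half of the independent part $W=V_2$. In part 3 it is also joined to half of $V_5$, via a second copy sharing $U'$. Its key property is that every small $3$-chromatic subgraph is near-acyclic.

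This near-acyclicity, combined with \cref{fact:zykov-universal}, lets every small red subgraph on $U'\cup V_2$ be embedded into some $Z_k^t(A,U)$ with $A$ mapped into $U'$. Hence every small red subgraph embeds into $\C Z_1$ (resp.\ $\C Z_2$, using the common $A$-part); this is \cref{prop:embed BH*}. The red graph is not itself a member of $\C Z_1$ or $\C Z_2$, so ``the families are by design what the red graph looks like'' is not an argument.

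The half-density attachment also dictates the weights. Since $U'$ sees only half of $V_2$, part 2 needs an extra independent part $V_1''$ of size about $n/9$ (absorbed via \cref{fact:embde C5}), with $|V_2|=\dots=|V_5|=2n/9$, giving $7/9$. Part 3 takes $|V_1|=o(n)$ and $|V_i|=n/4$ for $i\ge 2$, giving $3/4$.

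Two smaller points. First, in part 1 the grafted piece must have girth larger than $|H_1|$ (an Erd\H{o}s graph), so that every copy of $H_1$ meets $V_1$ in a forest. ``Sparse'' does not give this. Your suggested Kneser- or Zykov-type pieces contain $C_4$'s, and a red $C_4$ in $V_1$ plus two vertices of $V_2$ is already a red $K_3[2]$; moreover $Z_k^t$ itself is only $3$-chromatic.

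Second, parts 4--6 are essentially the paper's $G^4,G^5,G^6$ and go through once the weights are computed. Take the outside to be a balanced red complete bipartite graph joined in blue to $B_\eta$. Solving $\eta x+(1-x)=(1+x)/2$ gives $x=1/(3-2\eta)$ and minimum degree $(2-\eta)/(3-2\eta)$. For $\eta=1/3$ this is $x=3/7$ (not $2/7$) and degree $5/7$.
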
 

%See Figure~\ref{Figure:main0} for a summary of the case distinction.

\subsection{How to read the manual}\label{subsec: emb}
In this subsection, we introduce the notions needed in~\cref{prop: lower bound} and explain how to read the characterization in~\cref{Figure:main0}.

To formalize the notion of embeddability, we first define three relevant families of graphs obtained from merging Zykov graphs and blowups of 5-cycles. The embeddability of $H_1$ and $H_2$ into these families of graphs forms the basis of our classification. Let $\C C_5=\bigcup_{t\in\mathbb{N}}C_5[t]$ be the collection of all blowups of $C_5$.

\begin{defn}[Families $\C Z_0$, $\C Z_1$ and $\C Z_2$, see~\cref{C_5(F)}]
Let $s=\max\{|\mathcal{F}_k^t|,t2^{c(\mathcal{F}_k^t)}\}$ and $G=C_5[s]$ be an $s$-blowup of $C_5$ on vertex sets $(V_1,\ldots, V_5)$ where edges are between $(V_i,V_j)$ when $j-i\equiv \pm1 \pmod{5}$. 
\begin{itemize}
    \item Let $Z_0^{k,t}$ be the graph obtained from $G$ by inserting a copy of $\mathcal{F}_k^t$ into $V_1$.
    
    \item  Let $Z_1^{k,t}$ be the graph obtained from $G$ by replacing $G[V_1,V_2]$ with a copy of $Z_k^t(A,U)$.\footnote{Here, formally we first delete all edges between $V_1$ and $V_2$. Then choose subsets $A\subseteq V_1$ and $U\subseteq V_2$, and identify $A\cup U$ with the vertex set of $Z_k^t(A,U)$ so that $A$ (resp.\ $U$) plays the role of the $A$-part (resp.\ $U$-part) of $Z_k^t$. Finally, add all edges of $Z_k^t$ between $A$ and $U$ (and inside $A$) accordingly, while leaving all other edges of $G$ unchanged.}
    \item  Let $Z_2^{k,t}$ be the graph obtained from $G$ by replacing each of $G[V_1,V_2]$ and $G[V_1,V_5]$ with a copy of $Z_k^t(A,U)$ such that the two images of $A$ coincide.
\end{itemize}
For $i\in\{0,1,2\}$, let $\C Z_i=\bigcup_{k,t\in\mathbb{N}}Z_i^{k,t}$.
\end{defn}

\begin{figure}[!ht]
    \centering
    \resizebox{1\textwidth}{!}{%
\begin{tikzpicture}
\coordinate (r1'') at (3-3,0);
\coordinate (r2'') at (4-3,0);
\coordinate (r3'') at (5.31-3,-0.95);
\coordinate (r4'') at (5.62-3,-1.9);
\coordinate (r5'') at (5.12-3,-3.44);
\coordinate (r6'') at (3.9-3,-4.3);
\coordinate (r7'') at (3.1-3,-4.3);
\coordinate (r8'') at (1.88-3,-3.44);
\coordinate (r9'') at (1.38-3,-1.9);
\coordinate (r10'') at (1.69-3,-0.95);
\draw [red,line width=2pt](r1'') -- (r2'');
\draw [red,line width=2pt](r3'') -- (r4'');
\draw [red,line width=2pt](r5'') -- (r6'');
\draw [red,line width=2pt](r7'') -- (r8'');
\draw [red,line width=2pt](r9'') -- (r10'');

\draw (2-3,0) circle (1cm);
\draw (5-3,0) circle (1cm);
\draw (1.07-3,-2.85) circle (1cm);
\draw (5.93-3,-2.85) circle (1cm);
\draw (3.5-3,-4.6) circle (0.5cm);

\node[below] at (10.5-10,-4.3) {\textcolor{red}{$\mathcal{F}_k^t$}};
\node[below] at (10.5-10,-5.3) {\textcolor{red}{\huge{$\C Z_0$}}};
\node[below] at (8.1-10,-2.6) {\textcolor{red}{$V_2$}};
\node[below] at (9-10,0.2) {\textcolor{red}{$V_3$}};
\node[below] at (12-10,0.2) {\textcolor{red}{$V_4$}};
\node[below] at (13-10,-2.6) {\textcolor{red}{$V_5$}};
%%%%%%%%%%%%%%%%%%%%%%%%%%%%%%%%%%%%%%%%%%%%%%%%%%%%%%%%%%%%%%%%
\coordinate (r1') at (3+7,0);
\coordinate (r2') at (4+7,0);
\coordinate (r3') at (5.31+7,-0.95);
\coordinate (r4') at (5.62+7,-1.9);
\coordinate (r5') at (5.12+7,-3.44);
\coordinate (r6') at (3.9+7,-4.3);
\coordinate (r7') at (3.1+7,-4.3);
\coordinate (r8') at (1.88+7,-3.44);
\coordinate (r9') at (1.38+7,-1.9);
\coordinate (r10') at (1.69+7,-0.95);
\draw [red,line width=2pt](r1') -- (r2');
\draw [red,line width=2pt](r3') -- (r4');
\draw [red,line width=2pt](r5') -- (r6');
\draw [dashed,red,line width=2pt](r7') -- (r8');
\draw [red,line width=2pt](r9') -- (r10');

\draw (2+7,0) circle (1cm);
\draw (5+7,0) circle (1cm);
\draw (1.07+7,-2.85) circle (1cm);
\draw (5.93+7,-2.85) circle (1cm);
\draw (3.5+7,-4.6) circle (0.5cm);

\node[below] at (10.5,-4.3) {\textcolor{red}{$\mathcal{F}_k^t$}};
\node[below] at (10.5,-5.3) {\textcolor{red}{\huge{$\C Z_1$}}};
\node[below] at (8.1,-2.6) {\textcolor{red}{$U$}};
\node[below] at (9,0.2) {\textcolor{red}{$V_3$}};
\node[below] at (12,0.2) {\textcolor{red}{$V_4$}};
\node[below] at (13,-2.6) {\textcolor{red}{$V_5$}};
%%%%%%%%%%%%%%%%%%%%%%%%%%%%%%%%%%%%%%%%%%%%%%%%%%%%%%%%%%%%%%%%
\coordinate (r1) at (3+17,0);
\coordinate (r2) at (4+17,0);
\coordinate (r3) at (5.31+17,-0.95);
\coordinate (r4) at (5.62+17,-1.9);
\coordinate (r5) at (5.12+17,-3.44);
\coordinate (r6) at (3.9+17,-4.3);
\coordinate (r7) at (3.1+17,-4.3);
%\coordinate (r6) at (3.95+17,-4.1);
%\coordinate (r7) at (3.05+17,-4.1);
\coordinate (r8) at (1.88+17,-3.44);
\coordinate (r9) at (1.38+17,-1.9);
\coordinate (r10) at (1.69+17,-0.95);
\draw [red,line width=2pt](r1) -- (r2);
\draw [red,line width=2pt](r3) -- (r4);
\draw [dashed,red,line width=2pt](r5) -- (r6);
\draw [dashed,red,line width=2pt](r7) -- (r8);
\draw [red,line width=2pt](r9) -- (r10);

\draw (2+17,0) circle (1cm);
\draw (5+17,0) circle (1cm);
\draw (1.07+17,-2.85) circle (1cm);
\draw (5.93+17,-2.85) circle (1cm);
\draw (3.5+17,-4.6) circle (0.5cm);

\node[below] at (3.5+17,-4.3) {\textcolor{red}{$\mathcal{F}_k^t$}};
\node[below] at (3.5+17,-5.3) {\textcolor{red}{\huge{$\C Z_2$}}};
\node[below] at (3.5+17-2.4,-2.6) {\textcolor{red}{$U$}};
\node[below] at (3.5+17-2.4+4.9,-2.6) {\textcolor{red}{$U'$}};
\node[below] at (3.5+17-2.4+1,0.2) {\textcolor{red}{$V_3$}};
\node[below] at (3.5+17-2.4+4,0.2) {\textcolor{red}{$V_4$}};
\end{tikzpicture}
}
\caption{Families $\C Z_0$, $\C Z_1$ and $\C Z_2$. One can think of the subscript $i\in\{0,1,2\}$ in $\C Z_i$ as the number of Zykov graphs used to replace blowup of edges in $\C C_5$ (which is the number of dashed edges in the figure).}\label{C_5(F)}
\end{figure}

Given a graph $H$ and a family of graphs $\C H$, we say $H$ \emph{embeds into} $\C H$, denoted by $H\hookrightarrow \C H$, if $H$ is a subgraph of some graph in $\C H$. It is not hard to  see that  
$$H \hookrightarrow \C C_5 \quad \implies\quad H \hookrightarrow \C Z_2 \quad\implies\quad H \hookrightarrow \C Z_1 \quad\implies\quad H \hookrightarrow \C Z_0.$$ 
We can now define the embeddability of $H$ depending on where in the chain above it lies. Roughly speaking, the higher the embeddability of $H$ is, the easier it is to embed $H$.
\begin{defn}[Embeddability] 
For a graph 
$H$ with 
$\chi(H)=3$, the \emph{embeddability} $\mathrm{emb}(H)$ is defined as follows:
\begin{itemize}
    \item $\mathrm{emb}(H)=0$ if $H\not\hookrightarrow \C Z_0$;
    \item $\mathrm{emb}(H)=1$ if $H\not\hookrightarrow \C Z_1$ but $H\hookrightarrow \C Z_0$;
    \item $\mathrm{emb}(H)=2$ if $H\not\hookrightarrow \C Z_2$  but $H\hookrightarrow \C Z_1$;
    \item $\mathrm{emb}(H)=3$ if $H\not\hookrightarrow \C C_5$  but $H\hookrightarrow \C Z_2$;
    \item $\mathrm{emb}(H)=4$ if $H\hookrightarrow \C C_5$.
\end{itemize}
\end{defn}

\paragraph{Examples.} 
The following table demonstrates that although
embeddability and chromatic threshold are related, they capture different structural properties. The embeddability of each graph in the following table can be easily verified from the definitions. Similarly, the chromatic threshold is straightforward to determine for all cases except for one graph ($C^*_{10}[3]$), which we will prove later.
The symbol ``$/$" in the following table indicates that no such graph exists.
The symbol ``$\cup$" denotes the disjoint union of graphs.
\[
\begin{array}{c|ccccc}
H  &\mathrm{emb}(H) =4 & \mathrm{emb}(H) =3 & \mathrm{emb}(H) =2 & \mathrm{emb}(H) =1 &\mathrm{emb}(H) =0 \\ \hline
\delta_{\chi}(H) =0 & C_5 & PG & / & / & /\\
\delta_{\chi}(H) =1/3 & C^*_{10}[3]  &PG\cup C^*_{10}[3] & K_3 & /& /\\ 
\delta_{\chi}(H) =1/2  & C_5[2] & PG\cup C_5[2] & C_5[2] \cup K_3 & C^+_5[2] & K_3[2] \\
\end{array}
\]
In this table, $PG$ is the Petersen graph, $K_3[2]$ denotes the 2-blowup of the triangle $K_3$, $C_5[2]$ denotes the 2-blowup of the cycle $C_5$, and $C^+_5[2]$ is obtained from $C_5[2]$ by adding one edge within one of the blowup classes.
Let $C^*_{10}[3]$ be the graph obtained as follows. Start with a 3-blowup of the cycle $C_{10}$, in which the vertices (in order around the cycle) are replaced by partite sets
\[
(X_1, Y_2, X_3, Y_4, X_5, Y_1, X_2, Y_3, X_4, Y_5).
\]
Additionally, we add a perfect matching between the following pairs: $(Y_2, Y_3),(Y_3, Y_4),(Y_4, Y_5)$ (See Figure~\ref{figure: C_10}).

\begin{figure}[ht] % 使用figure环境
    \centering
\begin{minipage}{0.4\textwidth}
    \centering
\resizebox{0.8\textwidth}{!}{
\begin{tikzpicture}[scale=1.5,
    every node/.style={
        circle, 
        draw=black, % 设置边框颜色为黑色
        fill=black, % 设置填充颜色为白色
        minimum size=30pt, % 设置节点的最小尺寸
        inner sep=0pt % 设置内边距为0
    }
]
    % 定义点
    \node (A1) at (0,-16) {};
    \node (A2) at (0,-19) {};
    \node (A3) at (0,-22) {};

    \node (B1) at (10,0) {};
    \node (B2) at (12,0) {};
    \node (B3) at (14,0) {};

    \node (C1) at (11.5,17.5) {};
    \node (C2) at (13,19) {};
    \node (C3) at (14.5,21) {};

    \node (D1) at (-5,8) {};
    \node (D2) at (-6,9.3) {};
    \node (D3) at (-7,10.5) {};

    \node (E1) at (-18,0) {};
    \node (E2) at (-21,0) {};
    \node (E3) at (-24,0) {};

    \node (F1) at (0,-8) {};
    \node (F2) at (0,-10) {};
    \node (F3) at (0,-12) {};

    \node (G1) at (18,0) {};
    \node (G2) at (21,0) {};
    \node (G3) at (24,0) {};

    \node (H1) at (5,8) {};
    \node (H2) at (6,9.3) {};
    \node (H3) at (7,10.5) {};

    \node (I1) at (-11.5,17.5) {};
    \node (I2) at (-13,19) {};
    \node (I3) at (-14.5,21) {};

    \node (J1) at (-10,0) {};
    \node (J2) at (-12,0) {};
    \node (J3) at (-14,0) {};
    
    \draw[blue, thick] (A2) ellipse [x radius=3 , y radius=4];
    \draw[blue, thick] (B2) ellipse [x radius=3, y radius=4];
    \draw[blue, thick] (C2) ellipse [x radius=3, y radius=4];
    \draw[blue, thick] (D2) ellipse [x radius=3, y radius=4];
    \draw[blue, thick] (E2) ellipse [x radius=4, y radius=3];

    \draw[blue, thick] (F2) ellipse [x radius=3, y radius=4];
    \draw[blue, thick] (G2) ellipse [x radius=4, y radius=3];
    \draw[blue, thick] (H2) ellipse [x radius=3, y radius=4];
    \draw[blue, thick] (I2) ellipse [x radius=3, y radius=4];
    \draw[blue, thick] (J2) ellipse [x radius=3, y radius=4];

    \draw[black] (2,-16) -- (10,-3);
    \node[draw=none, fill=none, minimum size=3pt, inner sep=0pt, text=black, scale =10] at (5,-20) {$X_1$};
    \draw[black] (12,4) -- (13,15);
    \node[draw=none, fill=none, minimum size=3pt, inner sep=0pt, text=black, scale =10] at (7,0) {$Y_2$};
    \draw[black] (11,16) -- (-4,12.3);
    \node[draw=none, fill=none, minimum size=3pt, inner sep=0pt, text=black, scale =10] at (18,18) {$X_3$};
    \draw[black] (-9,9.3)-- (-18,2);
    \node[draw=none, fill=none, minimum size=3pt, inner sep=0pt, text=black, scale =10] at (-3,3) {$Y_4$};
    \draw[black] (-21,-3) -- (-3,-10);
    \node[draw=none, fill=none, minimum size=3pt, inner sep=0pt, text=black, scale =10] at (-28,0) {$X_5$};
    \draw[black] (3,-10) --(21,-3);
    \node[draw=none, fill=none, minimum size=3pt, inner sep=0pt, text=black, scale =10] at (0,-4) {$Y_1$};    
    \draw[black] (21,3) -- (9,9.3);
    \node[draw=none, fill=none, minimum size=3pt, inner sep=0pt, text=black, scale =10] at (28,0) {$X_2$};
    \draw[black] (4,12.3) -- (-10,19);
    \node[draw=none, fill=none, minimum size=3pt, inner sep=0pt, text=black, scale =10] at (3,3) {$Y_3$};   
    \draw[black] (-13,15) -- (-12,4); 
    \node[draw=none, fill=none, minimum size=3pt, inner sep=0pt, text=black, scale =10] at (-18,18) {$X_4$};
    \draw[black] (-12,-4) -- (-2,-16);
    \node[draw=none, fill=none, minimum size=3pt, inner sep=0pt, text=black, scale =10] at (-6,0) {$Y_5$};
    
    \draw[red,line width=5pt] (B1) -- (H1);
    \draw[red,line width=5pt] (B2) -- (H2);
    \draw[red,line width=5pt] (B3) -- (H3);
    \draw[red,line width=5pt] (D1) -- (H1);
    \draw[red,line width=5pt] (D2) -- (H2);
    \draw[red,line width=5pt] (D3) -- (H3);
    \draw[red,line width=5pt] (D1) -- (J1);
    \draw[red,line width=5pt] (D2) -- (J2);
    \draw[red,line width=5pt] (D3) -- (J3);
\end{tikzpicture}
}
\caption{$C_{10}^*[3]$}\label{figure: C_10}
\end{minipage}\hfill
\begin{minipage}{0.4\textwidth}
    \centering
\resizebox{0.6\textwidth}{!}{
\begin{tikzpicture}
  % 参数
  \def\Rbig{2}     % 大五边形外接圆半径
  \def\Rsmall{1}   % 小五边形外接圆半径
  \def\n{5}        % 五边形
  \def\offset{90}  % 让最下边与水平线平行（正多边形旋转角度）

  % 画大五边形及其顶点
  \foreach \i in {0,...,\numexpr\n-1} {
      \node[circle,draw,fill=white,minimum size=4pt,inner sep=0pt] (B\i) at
        ({\offset + \i*360/\n}:\Rbig) {};
  }
  \draw (B0)--(B1)--(B2)--(B3)--(B4)--cycle;
  \draw (B4)--(B0);
  
  % 画小五边形及其顶点
  \foreach \i in {0,...,\numexpr\n-1} {
      \node[circle,draw,fill=white,minimum size=4pt,inner sep=0pt] (S\i) at
        ({\offset + \i*360/\n}:\Rsmall) {};
  }
  \draw (S0)--(S2);
  \draw (S1)--(S3);
  \draw (S2)--(S4);
  \draw (S3)--(S0);
  \draw (S4)--(S1);
  \draw (S0)--(B0);
  \draw (S1)--(B1);
  \draw (S2)--(B2);
  \draw (S3)--(B3);
  \draw (S4)--(B4);
\end{tikzpicture}
}
\caption{Petersen graph}
\end{minipage}
\end{figure}

\begin{prop}
    \(\delta_{\chi}(C^*_{10}[3])=1/3\).
\end{prop}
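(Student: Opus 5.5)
The plan is to read off the value from \cref{thm:characterization}, which leaves two things to check beyond $\chi(C^*_{10}[3])=3$: that $\mathcal{M}(C^*_{10}[3])$ contains a forest (ruling out $1/2$), and that $C^*_{10}[3]$ is not near-acyclic (ruling out $0$). Write $H=C^*_{10}[3]$. In the cyclic order $(X_1,Y_2,X_3,Y_4,X_5,Y_1,X_2,Y_3,X_4,Y_5)$ the $X$-parts and $Y$-parts alternate, so the underlying $C_{10}[3]$ is bipartite with sides $X=\bigcup X_i$ and $Y=\bigcup Y_i$. The added matchings lie inside $Y$.

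\emph{Chromatic number.} For $\chi(H)\ge 3$, take $y_2\in Y_2$ and its matched partner $y_3\in Y_3$. Then $y_3,x_4,y_5,x_1,y_2$ (any $x_4\in X_4$, $y_5\in Y_5$, $x_1\in X_1$) is a path of length $4$ along the blown-up cycle, and closing it with $y_2y_3$ gives a $C_5$. For $\chi(H)\le 3$, colour all of $X$ with colour $1$, $Y_1\cup Y_2\cup Y_4$ with colour $2$ and $Y_3\cup Y_5$ with colour $3$. Every matching edge joins colours $2$ and $3$, so this colouring is proper.

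\emph{A forest in $\mathcal{M}(H)$.} Deleting colour class $1$ from this $3$-colouring leaves $H[Y]$. This graph consists of the three matchings $Y_2Y_3$, $Y_3Y_4$, $Y_4Y_5$ plus isolated vertices, i.e.\ a disjoint union of paths. Hence $\mathcal{M}(H)$ contains a forest and $\delta_\chi(H)\neq 1/2$.

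\emph{Not near-acyclic.} This is the main step; the blowup factor $3$ is exactly what makes it work. Suppose $V(H)=F\cup S$ with $F$ inducing a forest and $S$ independent.
\begin{enumerate}
\item Fix consecutive parts $P,Q$ of the $10$-cycle, which span a complete bipartite $K_{3,3}$. If both $|P\setminus S|\ge2$ and $|Q\setminus S|\ge 2$, then $F$ contains a $C_4$, which is impossible. So one of $P,Q$, say $P$, has $|P\cap S|\ge 2$.
\item Independence of $S$ then forces $Q\cap S=\emptyset$, since $P$ and $Q$ are completely joined.
\item Going around the cycle, $S$ is therefore contained entirely in one side, and every part on that side meets $S$ in at least $2$ of its $3$ vertices.
\item $S$ cannot lie in $Y$. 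Otherwise $|Y_2\cap S|,|Y_3\cap S|\ge2$, so by pigeonhole the matching between $Y_2$ and $Y_3$ has an edge with both ends in $S$.
\item So $S\subseteq X$ and $Y\subseteq F$. Take $y_2\in Y_2$ and follow the matchings to $y_3\in Y_3$, $y_4\in Y_4$, $y_5\in Y_5$. For any $x_1\in X_1$, the cycle $x_1y_2y_3y_4y_5x_1$ is a $5$-cycle, because $X_1$ is completely joined to both $Y_2$ and $Y_5$.
\item This odd cycle meets $S$ in at most one vertex, violating the near-acyclic condition.
\end{enumerate}
Hence $H$ is not near-acyclic, and \cref{thm:characterization} gives $\delta_\chi(C^*_{10}[3])=1/3$.
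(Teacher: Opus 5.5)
Your proof is correct and follows essentially the same route as the paper. Both use a $3$-colouring with $X$ as one colour class to get a forest in $\mathcal{M}(H)$. Both then run the $K_{3,3}$ heavy/empty alternation to force $S$ onto one side, rule out the $Y$ side with a perfect matching, and rule out the $X$ side with the $5$-cycle $x_1y_2y_3y_4y_5$. Your walk around the $10$-cycle is just a more direct phrasing of the paper's $A_i/B_i$ bookkeeping.
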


\begin{proof}
Let \(H=C^*_{10}[3]\), \(X=\bigcup_{i=1}^5 X_i\), and
\(Y=\bigcup_{i=1}^5 Y_i\), with subscripts taken modulo \(5\). The graph
\(H\) has a proper \(3\)-coloring in which \(X\) is one color class and
\(H[Y]\) is colored with the other two colors. Since \(H\) contains an
odd cycle, this is an optimal coloring. Deleting the color class \(X\)
leaves \(H[Y]\), which is a forest. Hence \(\mathcal M(H)\) contains a
forest.

It remains to show that \(H\) is not near-acyclic. Suppose otherwise. Then
there is an independent set \(S\subseteq V(H)\) such that \(H-S\) is a
forest and every odd cycle of \(H\) meets \(S\) in at least two vertices.

For \(i\in[5]\), write \(A_i\) for the statement \(|S\cap X_i|\ge 2\), and
\(B_i\) for the statement \(|S\cap Y_i|\ge 2\). Since \(H[X_i,Y_{i+1}]\cong
K_{3,3}\), the statements \(A_i\) and \(B_{i+1}\) cannot both hold, by
independence of \(S\). They also cannot both fail, since then \(H-S\) would
retain a copy of \(K_{2,2}\) inside \(H[X_i,Y_{i+1}]\), giving a cycle.
Thus \(A_i\) holds exactly when \(B_{i+1}\) fails. Similarly, using
\(H[Y_i,X_{i+1}]\cong K_{3,3}\), \(B_i\) holds exactly when \(A_{i+1}\)
fails.

It follows that \(B_{i+1}\) holds exactly when \(B_{i-1}\) holds. Since the
indices are modulo \(5\), either all \(B_i\) hold or none of them holds.

If all \(B_i\) hold, then \(|S\cap Y_4|\ge 2\) and \(|S\cap Y_5|\ge 2\).
But the edges between \(Y_4\) and \(Y_5\) form a matching of size \(3\), so
any two vertices in \(Y_4\) and any two vertices in \(Y_5\) contain a
matched pair. Hence \(S\) contains an edge, contradiction.

Therefore none of the \(B_i\) holds. Hence all \(A_i\) hold. In particular,
for every \(j\), the set \(S\cap X_{j-1}\) has size at least \(2\). Since
\(H[X_{j-1},Y_j]\cong K_{3,3}\), no vertex of \(Y_j\) can lie in \(S\).
Thus \(S\cap Y=\varnothing\).

Now choose \(x\in S\cap X_1\), and take one of the paths
\(y_2y_3y_4y_5\) in \(H[Y]\), with \(y_j\in Y_j\). Since \(X_1\) is
complete to both \(Y_2\) and \(Y_5\), the vertices
\(x,y_2,y_3,y_4,y_5\) form a \(5\)-cycle. This cycle meets \(S\) exactly in
the single vertex \(x\), contradicting near-acyclicity.

Thus \(H\) is not near-acyclic. By \cref{thm:characterization}, we conclude
that \(\delta_{\chi}(H)=1/3\).
\end{proof}

By~\cref{Figure:main0}, we have the following table of examples of $(H_1,H_2)$ that attain all values in~\cref{thm:main}.
\[
\begin{array}{c|cccccccccc}
\delta_\chi(\cdot,\cdot)  & C_5 & C^*_{10}[3] &C_5[2]&  PG &PG\cup C^*_{10}[3] &PG\cup C_5[2] & K_3 & C_5[2] \cup K_3 & C^+_5[2]& K_3[2]\\ \hline 
C_5 & 2/3 & 5/7 & 3/4 & 2/3 & 5/7& 3/4 & 5/7 & 3/4 &3/4 & 3/4\\

C^*_{10}[3] & / & 5/7 & 3/4 & 2/3 & 5/7& 3/4 & 5/7 & 3/4 &3/4 & 3/4\\

C_5[2] & / & / & 3/4 & 3/4 & 3/4 & 3/4 & 3/4 & 3/4 & 3/4 & 3/4\\

PG  & / & / & / & 2/3 & 5/7 & 3/4 & 3/4 &3/4 &7/9 & 4/5\\

PG \cup C^*_{10}[3] & / & / & / & / & 5/7 & 3/4 & 3/4 &3/4 &7/9 & 4/5\\

PG\cup C_5[2]  & / & /& / & / & / &3/4 & 3/4 &3/4 &7/9 & 4/5\\

K_3  & / &/ & / & / & /& /  & 3/4 &3/4 &7/9 & 4/5\\

C_5[2] \cup K_3  &  /& /&  / & / & / &/ & / &3/4 &7/9 & 4/5\\

C^+_5[2]  & / &/ & / & / &/ & /  &/ &/ &7/9 & 4/5\\

K_3[2] & / & /& / & / & /&/ & / &/ &/ & 4/5\\

\end{array}
\]

\subsection{Building blocks}\label{sec:block}
In this subsection, we present some building blocks that will be used in the extremal constructions. We first recall the extremal constructions for the chromatic thresholds of single graphs.

\paragraph{Extremal graph for $\delta_\chi(H)=1/2$.}
The \emph{girth} of a graph $G$, denoted by $g(G)$, is the length of its shortest cycle. 
For each $k, \ell \in \mathbb{N}$, a graph is a \emph{$(k, \ell)$-Erd\H{o}s graph} if it has chromatic number at least $k$, and girth at least $\ell$. Erd\H{o}s \cite{erdos1959graph} showed the existence of \emph{$(k, \ell)$-Erd\H{o}s graph} for all $n\ge n_0(k,\ell)$.

  Suppose $H$ is a graph with $\chi(H)=3$ such that $\mathcal{M}(H)$ contains no forest.
  The following construction shows that $\delta_\chi(H)\ge 1/2$.
  Let $k \in \mathbb{N}$, and let $G'$ be a $(k,|H|+1)$-Erd\H{o}s graph.
  Let $G$ be the graph obtained from a complete bipartite graph $K_{|G'|,|G'|}$ by replacing one of its parts with a copy of $G'$. 
  Then $\delta(G)\ge|G|/2$ and $\chi(G) \geq k$. 
As $\C M(H)$ has no forest, $G$ is $H$-free.

\paragraph{Extremal graph for $\delta_\chi(H)=1/3$.}
The following theorem provides an extremal construction showing that $\delta_\chi(H)\ge 1/3$ for 3-chromatic $H$ that is not near-acyclic.  
 
\begin{theorem}[\L uczak and Thomass\'e \cite{luczak2010coloring}]\label{LT}
  For every $k, \ell \in \mathbb{N}$, a real number $\alpha>0$ and $n \ge n_0(k,\ell,\alpha)$, there exists an $n$-vertex $(k,\ell,\alpha)$-Borsuk-Hajnal graph $\mathrm{BH}=\mathrm{BH}(n,k,\ell)$ on vertex set $U' \cup W \cup X$ satisfying the following:
  \begin{itemize}
      \item $\chi(\mathrm{BH}[U']) \geq k, \quad   g(\mathrm{BH}[U']) \geq \ell, \quad  \text { and } \quad \delta(\mathrm{BH}) \geq (1/3-\alpha) n$;
      \item $|U'| \le \alpha n, \quad  \text { and } \quad  \forall u\in U'$, $|N(u)\cap W| \ge (1/2-\alpha)|W|$;
      
      \item $|W|=2|X|$, $\forall x\in X$, $N(x)=W$, and $W$ is an independent set;
 
      \item Every subgraph $H \subseteq \mathrm{BH}$ with $|H|<\ell$ and $\chi(H)=3$ is near-acyclic, i.e, $\delta_{\chi}(H)=0$.
  \end{itemize}
\end{theorem}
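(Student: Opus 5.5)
This is the Borsuk--Hajnal construction of \L uczak and Thomass\'e, and I would rebuild it geometrically on a high-dimensional sphere. The three bullet points suggest the roles: $U'$ carries all the chromatic number, and $X\cup W$ is a complete bipartite graph $K_{|X|,|W|}$ with $|W|=2|X|$ that supplies the minimum degree. Fix $d$ large in terms of $k$ and $\alpha$, and $\varepsilon>0$ small in terms of $\ell$ and $\alpha$.

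The first step is to build $U'\subseteq S^d$: a finite set of points of the sphere whose graph $\mathrm{BH}[U']$ has girth at least $\ell$, chromatic number at least $k$, and only \emph{$\varepsilon$-near-antipodal} edges. The tool is the Borsuk graph on $S^d$, in which $x\sim y$ iff $\angle(x,-y)<\varepsilon$. By Borsuk--Ulam (Lyusternik--Schnirelmann) it has chromatic number at least $d+2$ for measurable colourings. It also has odd girth of order $1/\varepsilon$: walking along a path flips the point to near-antipodal at each step, so an odd cycle of length $m$ would need $m\varepsilon\gtrsim\pi$.
\begin{itemize}
\item To pass to a finite graph with genuine girth at least $\ell$, I would follow Erd\H{o}s--Hajnal/Lov\'asz. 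Take a fine partition of $S^d$ into small cells, sample a random sparse graph of near-antipodal pairs between sampled points, and delete one vertex from each short cycle (the Erd\H{o}s alteration).
\item The chromatic number survives because any colouring with fewer than $k$ colours has a colour class that is ``dense'' in near-antipodal directions. By Borsuk--Ulam applied to the cell structure, with high probability this class then contains an edge.
\end{itemize}
I expect this step to be the main obstacle: it needs a careful measure and union-bound argument so that the random sparsification keeps a Borsuk--Ulam type obstruction against every colouring at once. I would try to quote it essentially as Lov\'asz's high-girth Borsuk construction.

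The second step builds $W$ and $X$. Let $W$ be a large set of points of $S^d$, equidistributed and independent, and let $X$ be a set of $|W|/2$ new vertices, each adjacent to all of $W$. Join $u\in U'$ to $w\in W$ iff $\angle(u,w)<\pi/2-\varepsilon$. By concentration of measure on $S^d$, for $d$ large and $\varepsilon$ small each cap $\{w:\angle(u,w)<\pi/2-\varepsilon\}$ has measure at least $1/2-\alpha$. Equidistribution then gives $|N(u)\cap W|\ge(1/2-\alpha)|W|$. Taking $n$ large with $|U'|\le\alpha n$, $|W|\approx 2n/3$ and $|X|\approx n/3$, the degree bounds follow: vertices of $X$ have degree $|W|$, vertices of $W$ have degree at least $|X|$, and vertices of $U'$ have degree about $|W|/2$. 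Hence $\delta(\mathrm{BH})\ge(1/3-\alpha)n$.

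The third step is the near-acyclicity of small subgraphs. Take $H\subseteq\mathrm{BH}$ with $|H|<\ell$ and $\chi(H)=3$, and set $S=V(H)\cap W$, which is independent. Then $H-S$ is a forest: $X$ is isolated from $U'$, and $H[U']$ is a forest because $g(\mathrm{BH}[U'])\ge\ell$. Now consider an odd cycle meeting $S$ in exactly one vertex $w$; there is none meeting $S$ in no vertex, since $U'$ and $X$ contain no cycles, and any cycle through $X$ uses two $W$-vertices. The rest of such a cycle is an odd path $u_1\cdots u_2$ in $U'$ of length at most $\ell$, so $u_1$ and $u_2$ are nearly antipodal: $\angle(u_1,-u_2)\le\ell\varepsilon$. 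But both lie within angle $\pi/2-\varepsilon$ of $w$, so $\angle(u_1,u_2)<\pi-2\varepsilon$. Choosing the edge tolerance inside $U'$ to be $\varepsilon/\ell$ rather than $\varepsilon$ makes these two incompatible. Hence every odd cycle of $H$ meets $S$ at least twice, so $H$ is near-acyclic, and $\delta_\chi(H)=0$ by \cref{fact:zykov-universal}.
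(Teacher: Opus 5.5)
Your construction is the standard Borsuk--Hajnal graph, which the paper cites from \L uczak--Thomass\'e without proof (deferring details to Allen et al.). Your near-acyclicity argument is the right one: you take $S=V(H)\cap W$, and an odd $U'$-path of length less than $\ell$ with tolerance $\varepsilon/\ell$ forces $\angle(u_1,u_2)>\pi-\varepsilon$, which contradicts $\angle(u_1,u_2)<\pi-2\varepsilon$ coming from the common $W$-neighbour.

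Two fixes are needed.
\begin{itemize}
\item The cap bound does not come from concentration of measure; concentration actually works against you. For fixed $\varepsilon$, the cap $\{w:\angle(u,w)<\pi/2-\varepsilon\}$ has measure roughly the probability that a standard Gaussian exceeds $\sqrt d\,\sin\varepsilon$, and this tends to $0$ as $d\to\infty$. The bound instead comes from continuity in $\varepsilon$ for fixed $d$. So $\varepsilon$ must be chosen after $d$, small compared with $\alpha/\sqrt d$. This is harmless, since Steps 1 and 3 work for every $\varepsilon>0$.
\item The Borsuk subgraph you quote must have genuine girth at least $\ell$, because Step 3 uses that $H[U']$ is a forest. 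Near-antipodality alone gives only large odd girth, and the Borsuk graph itself contains $4$-cycles. Your random-alteration sketch does give genuine girth once you add a counting form of Lyusternik--Schnirelmann. Cover $S^d$ by the closed sets of points $x$ whose small cap contains at least a $1/(k-1)$ share of colour $i$. This yields $\Omega(N^2)$ monochromatic near-antipodal pairs for every $(k-1)$-colouring of all but $o(N)$ sample points, which beats the union bound over all such colourings and deleted sets.
\end{itemize}
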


Roughly speaking, the part $\mathrm{BH}[U']$ is a variant of Borsuk graph with high girth and chromatic number where vertices are uniformly distributed on a high-dimensional sphere and two vertices are adjacent if they are at almost antipodal positions. We refer the reader to \cite{allen2013chromatic} for the details of the construction.

We can now define two families of building blocks.

\begin{defn}[Families $B_i(n,k,\ell)$, see~\cref{borsuk}]
Let $\mathrm{BH}^-$ be the graph on vertex set $U'\cup W$ obtained from $\mathrm{BH}(m,k,\ell)$ by removing the vertex set $X$. 
\begin{itemize}
    \item Let $B_1(n,k,\ell)$ be the $n$-vertex graph obtained from $\mathrm{BH}^-$ by adding vertex sets $Z_1$, $Z_2$, $Z_3$ with $|Z_1|=|Z_2|=|Z_3|=|W|$ and adding all edges in $K[W,Z_1]\cup K[Z_1,Z_2]\cup K[Z_2,Z_3]\cup K[U',Z_3]$.\footnote{In both $B_i(n,k,\ell)$, we should think of $\alpha=o(1)$ and so we  will omit mentioning $\alpha$. Thus, $|U'|=o(m)$, $|W|=(\frac{2}{3}+o(1))m$ and $n=(\frac{8}{3}+o(1))m$.}

    \item Let $B_2(n,k,\ell)$ be the $n$-vertex graph obtained from $B_1(n,k,\ell)$ by replacing the part induced on $U'\cup Z_3$ with a copy of $\mathrm{BH}^-$ such that the two images of $U'$ are identified.
\end{itemize}
\end{defn}

\begin{figure}[!ht]
    \centering
    \resizebox{0.7\textwidth}{!}{%
\begin{tikzpicture}
\coordinate (r1') at (3+7,0);
\coordinate (r2') at (4+7,0);
\coordinate (r3') at (5.31+7,-0.95);
\coordinate (r4') at (5.62+7,-1.9);
\coordinate (r5') at (5.12+7,-3.44);
\coordinate (r6') at (3.9+7,-4.3);
\coordinate (r7') at (3.1+7,-4.3);
\coordinate (r8') at (1.88+7,-3.44);
\coordinate (r9') at (1.38+7,-1.9);
\coordinate (r10') at (1.69+7,-0.95);
\draw [red,line width=2pt](r1') -- (r2');
\draw [red,line width=2pt](r3') -- (r4');
\draw [red,line width=2pt](r5') -- (r6');
\draw [dashed,red,line width=2pt](r7') -- (r8');
\draw [red,line width=2pt](r9') -- (r10');

\draw (2+7,0) circle (1cm);
\draw (5+7,0) circle (1cm);
\draw (1.07+7,-2.85) circle (1cm);
\draw (5.93+7,-2.85) circle (1cm);
\draw (3.5+7,-4.6) circle (0.5cm);

\node[below] at (10.5,-4.3) {\textcolor{red}{$U'$}};
\node[below] at (8.1,-2.6) {\textcolor{red}{$W$}};
\node[below] at (9,0.2) {\textcolor{red}{$Z_1$}};
\node[below] at (12,0.2) {\textcolor{red}{$Z_2$}};
\node[below] at (13,-2.6) {\textcolor{red}{$Z_3$}};
%%%%%%%%%%%%%%%%%%%%%%%%%%%%%%%%%%%%%%%%%%%%%%%%%%%%%%%%%%%%%%%%
\coordinate (r1) at (3+17,0);
\coordinate (r2) at (4+17,0);
\coordinate (r3) at (5.31+17,-0.95);
\coordinate (r4) at (5.62+17,-1.9);
\coordinate (r5) at (5.12+17,-3.44);
\coordinate (r6) at (3.9+17,-4.3);
\coordinate (r7) at (3.1+17,-4.3);
%\coordinate (r6) at (3.95+17,-4.1);
%\coordinate (r7) at (3.05+17,-4.1);
\coordinate (r8) at (1.88+17,-3.44);
\coordinate (r9) at (1.38+17,-1.9);
\coordinate (r10) at (1.69+17,-0.95);
\draw [red,line width=2pt](r1) -- (r2);
\draw [red,line width=2pt](r3) -- (r4);
\draw [dashed,red,line width=2pt](r5) -- (r6);
\draw [dashed,red,line width=2pt](r7) -- (r8);
\draw [red,line width=2pt](r9) -- (r10);

\draw (2+17,0) circle (1cm);
\draw (5+17,0) circle (1cm);
\draw (1.07+17,-2.85) circle (1cm);
\draw (5.93+17,-2.85) circle (1cm);
\draw (3.5+17,-4.6) circle (0.5cm);

\node[below] at (3.5+17,-4.3) {\textcolor{red}{$U'$}};
\node[below] at (3.5+17-2.4,-2.6) {\textcolor{red}{$W$}};
\node[below] at (3.5+17-2.4+4.9,-2.6) {\textcolor{red}{$W'$}};
\node[below] at (3.5+17-2.4+1,0.2) {\textcolor{red}{$Z_1$}};
\node[below] at (3.5+17-2.4+4,0.2) {\textcolor{red}{$Z_2$}};
\end{tikzpicture}
}
\caption{Families $B_1$ and $B_2$}\label{borsuk}
\end{figure}

The following proposition shows that if a small graph embeds into $B_1$ ($B_2$ resp.), then it can also be embedded into some graph in $\C Z_1$ ($\C Z_2$ resp.).
\begin{prop}\label{prop:embed BH*}
 Let $H$ be a graph with $\chi(H)\le 3$ and $i\in\{1,2\}$. If $H\subseteq B_i(n,k,\ell)$ with $\ell > |H|$, then $H\hookrightarrow \C Z_i$.
\end{prop}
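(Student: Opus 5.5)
The plan is to reduce everything to the fourth property of \cref{LT}: small 3-chromatic subgraphs of $\mathrm{BH}$ are near-acyclic, so \cref{fact:zykov-universal} embeds them into a Zykov graph. Fix a copy of $H$ in $B_i(n,k,\ell)$. Let $H_{Z}$ be the part of $H$ lying in $Z_1\cup Z_2\cup Z_3$ (for $i=1$) or in $Z_1\cup Z_2$ (for $i=2$). Let $H_{\mathrm{BH}}$ be the part lying in each copy of $\mathrm{BH}^-$. We match $B_1$ to $Z_1^{k',t'}$ via $V_1\leftrightarrow U'$, $V_2\leftrightarrow W$, $V_3\leftrightarrow Z_1$, $V_4\leftrightarrow Z_2$, $V_5\leftrightarrow Z_3$, which is consistent with \cref{borsuk}. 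Every edge of $B_1$ outside $\mathrm{BH}^-[U'\cup W]$ is then a blowup edge of $C_5$. So it suffices to embed $H'=H[V(H)\cap(U'\cup W)]$ into $Z_k^t(A,U)$ with $V(H)\cap U'\subseteq A$ and $V(H)\cap W\subseteq U$. The remaining vertices are placed in the independent blowup classes $V_3,V_4,V_5$, which are complete to the appropriate neighbours. Some care is needed because $V_1$ in $Z_1^{k,t}$ contains $A$ plus further vertices. Vertices of $U'$ must land in $A$, since their neighbours in $Z_3$ must be complete to them. This is fine, because $V_1\setminus A$ and $A$ both remain complete to $V_5$.

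The key step is to show that $H'$, viewed inside $\mathrm{BH}$, embeds into a Zykov graph with $U'$-vertices in the $A$-side and $W$-vertices in the $U$-side. I would add to $H'$ a vertex $x\in X$, which in $\mathrm{BH}$ is adjacent to all of $W$, and then glue an odd cycle if necessary so that $\chi=3$. Alternatively, I would apply the argument of \cite{allen2013chromatic} directly. $\mathrm{BH}[U']$ has girth $\ge\ell>|H|$, so $H[V(H)\cap U']$ is a forest $F$, and $W$ is independent. This gives the partition $(F,S)$ with $S=V(H)\cap W$ needed for near-acyclicity. The odd-cycle condition (every odd cycle meets $S$ twice) is exactly what property four of \cref{LT} guarantees: apply it to $H'$ plus a triangle-free gadget making it 3-chromatic, keeping the order below $\ell$. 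Equivalently, no $w\in S$ sees both colour classes of a tree of $F$. With that local condition, the standard proof of \cref{fact:zykov-universal} gives the embedding. Each tree component of $F$ maps into a component $T_j$ of $\mathcal F_k^t$ with its bipartition respected. Each $w\in S$ maps to a distinct $u^i_I$, where $I$ records, for each tree, which side $w$ is adjacent to. So $F\subseteq A$ and $S\subseteq U$, as required.

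For $i=2$, the same argument is applied to both copies of $\mathrm{BH}^-$ simultaneously, with $U'$ identified. The forest $F$ on $V(H)\cap U'$ is common to both. The vertices in $W$ and $W'$ each get a label set $I$ relative to the same tree decomposition, and we embed them into the two $U$-parts of $Z_2^{k,t}$, which share $A$. Here $V_5\leftrightarrow W'$, $V_4\leftrightarrow Z_2$, and $V_3\leftrightarrow Z_1$.

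The main obstacle is the middle step: extracting from property four of \cref{LT} the local condition that no $W$-vertex of $H$ is adjacent to both sides of a tree of $F$. I would argue by contradiction. Suppose $w$ has such neighbours $a,b$. The tree path from $a$ to $b$ has odd length, so $w$ and this path form an even cycle. That alone gives no contradiction, so the local condition should instead be derived from the structure of the Borsuk-type graph in \cite{allen2013chromatic}, applied to the small subgraph consisting of this path, $w$, and an $X$-vertex together with a second $W$-vertex. This creates an odd cycle meeting the independent set only once. If that route proves delicate, I would instead cite the corresponding lemma of \cite{allen2013chromatic} on subgraphs of $\mathrm{BH}$ embedding in Zykov graphs with the prescribed sides.
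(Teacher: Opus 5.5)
Your reduction is the same as the paper's. You embed $H'=H[V(H)\cap(U'\cup W)]$ into $Z_k^t(A,U)$ with the $U'$-vertices in $A$ and the $W$-vertices in $U$, then place the remaining vertices in the $C_5$ blowup classes, and for $i=2$ you use the forest on $U'$ as the shared $A$-part. You are also right that the sides matter. The gap is the key local condition: no $w\in V(H)\cap W$ sees both colour classes of a tree of $F$. You never establish it, and the reasoning you give contains an error.

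If $w\sim a,b$ with $a,b$ in opposite classes of a tree, the $a$--$b$ tree path has odd length $L$. Together with $w$ it forms a cycle of length $L+2$, which is \emph{odd}, not even, and which meets $W$ only in $w$. This corrected observation is exactly how the paper settles the case $\chi(H')\le 2$: bipartiteness forbids such a cycle, so the local condition holds outright. Your gadget detour is therefore unnecessary there. It was also unjustified, since property four of \cref{LT} only applies to subgraphs of $\mathrm{BH}$, so you cannot glue on an arbitrary odd cycle.

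For $\chi(H')=3$, your claim that the odd-cycle condition for your particular $S=V(H)\cap W$ ``is exactly what property four guarantees'' is not right. Property four only says that \emph{some} forest/independent-set partition witnesses near-acyclicity. A Zykov embedding built from an arbitrary witness may put $U'$-vertices on the $U$-side inside $V_2$. Their $Z_3$-neighbours, which sit in $V_5$, are then no longer adjacent to them.

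A small auxiliary subgraph built around the bad cycle does not help either. That cycle is itself near-acyclic: take two non-adjacent vertices of it as the independent set. Your final sketch, with an $X$-vertex and a second $W$-vertex, identifies no configuration that property four forbids.

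What you actually need is the side-specific form. For every subgraph of $\mathrm{BH}$ on fewer than $\ell$ vertices, its $(U'\cup X)$-part should be a forest and its $W$-part an independent set meeting every odd cycle at least twice. Equivalently, no $w\in W$ is adjacent to both ends of an odd path of length less than $\ell-2$ in $\mathrm{BH}[U']$. This is a feature of the Borsuk--Hajnal construction itself \cite{luczak2010coloring,allen2013chromatic}. Roughly, adjacency in $U'$ means near-antipodality, while each $u\in U'$ is joined to a cap of $W$ slightly smaller than a hemisphere.

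The paper's proof also relies on this reading in its $\chi=3$ case, where it passes from near-acyclicity to a Zykov embedding compatible with the blowup classes. State this form explicitly as your input and cite it; with that, the rest of your argument, including the $i=2$ case, goes through.
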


\begin{proof}
We prove the case $i=1$ first. Let $V(B_1(n,k,\ell))=U' \cup W\cup Z_1\cup Z_2\cup Z_3$.  Suppose $B_1(n,k,\ell)$ contains a copy of $H$, say $H^*$, on vertex set $V(H^*)=U^*\cup W^*\cup Z_1^*\cup Z_2^* \cup Z_3^*$, where $U^*=V(H^*)\cap U'$, $W^*=V(H^*)\cap W$ and $Z_i^*=V(H^*)\cap Z_i$ for $i\in [3]$.
Note that $\ell>|H^*|$. We first show that $H^*[U^*\cup W^*]$ embeds into some Zykov graph $Z_k^t(A,U)$. If $\chi(H^*[U^*\cup W^*])=3$, then by \cref{LT} this graph is near-acyclic, and hence embeds into some $Z_k^t(A,U)$ by \cref{fact:zykov-universal}. If $\chi(H^*[U^*\cup W^*])\le 2$, then the same conclusion holds. Indeed, $H^*[U^*]$ is a forest, since $g(\mathrm{BH}[U'])>\ell>|H^*|$, and bipartiteness prevents any vertex of $W^*$ from having neighbours in both color classes of a component of this forest. Thus $H^*[U^*\cup W^*]$ also embeds into some $Z_k^t(A,U)$.

The remaining parts $Z_1^*, Z_2^*, Z_3^*$ can be embedded into the corresponding blowup classes of $C_5$ in the definition of $\mathcal{Z}_1$, respecting all adjacencies. Hence $H\hookrightarrow \mathcal{Z}_1$.

For $i=2$, we apply the same argument simultaneously to the two copies of $\mathrm{BH}^-$ sharing the same $U'$-part. The subgraph induced on the vertices mapped to $U'$ is a forest, and we use this forest as the common $A$-part of the two Zykov graphs. The vertices mapped to the two $W$-parts are then encoded independently in the two corresponding $U$-parts. This gives an embedding into $\mathcal Z_2$.
\end{proof}

\subsection{Extremal constructions}\label{sec:ext-constr}
Throughout this section, let $H_1,H_2$ be graphs with chromatic number 3 and $\mathrm{emb}(H_1)\le\mathrm{emb}(H_2)$. Let $k,\ell\in\mathbb{N}$ with $\ell>\max\{|H_1|,|H_2|\}$. We are now ready to construct six types of extremal graphs $G^1,\ldots,G^6$, each satisfying $\chi(G^i)\ge k$, as required in \cref{prop: lower bound}. 
In all figures, we use ``E'' to denote a $(k,\ell)$-Erd\H{o}s graph, and ``B'' to denote the subgraph $\mathrm{BH}[U']$ of a Borsuk--Hajnal graph, which has chromatic number at least $k$ and girth at least $\ell$.
Note that each $G^i$ contains either an Erd\H{o}s graph or a subgraph of a Borsuk--Hajnal graph, and hence each $G^i$ has chromatic number at least $k$.

The first three families of extremal constructions will be based on the following $2$-edge-colored complete $5$-partite graph.
We say a complete $5$-partite graph on $(V_1,V_2,V_3,V_4,V_5)$ has \emph{Ramsey coloring of $K_5$} if the edges between $V_i$ and $V_j$ are red if and only if $j-i \equiv \pm 1 \pmod{5}$.

\begin{figure}[H]
\begin{minipage}{0.3\textwidth}
\centering
\resizebox{0.97\textwidth}{!}{%
\begin{tikzpicture}
\coordinate (b1) at (2.59-1,-0.81){};
\coordinate (b2) at (4.41-1,-0.81){};
\coordinate (b3) at (4.98-1,-2.54){};
\coordinate (b4) at (3.5-1,-3.62){};
\coordinate (b5) at (2.02-1,-2.54){};
\draw [blue,line width=2pt](b1) -- (b3);
\draw [blue,line width=2pt](b1) -- (b4);
\draw [blue,line width=2pt](b2) -- (b4);
\draw [blue,line width=2pt](b2) -- (b5);
\draw [blue,line width=2pt](b3) -- (b5);

\coordinate (r1) at (3-1,0);
\coordinate (r2) at (4-1,0);
\coordinate (r3) at (5.31-1,-0.95);
\coordinate (r4) at (5.62-1,-1.9);
\coordinate (r5) at (5.12-1,-3.44);
\coordinate (r6) at (4.31-1,-4.03);
\coordinate (r7) at (2.69-1,-4.03);
\coordinate (r8) at (1.88-1,-3.44);
\coordinate (r9) at (1.38-1,-1.9);
\coordinate (r10) at (1.69-1,-0.95);
\draw [red,line width=2pt](r1) -- (r2);
\draw [red,line width=2pt](r3) -- (r4);
\draw [red,line width=2pt](r5) -- (r6);
\draw [red,line width=2pt](r7) -- (r8);
\draw [red,line width=2pt](r9) -- (r10);

\draw (2-1,0) circle (1cm);
\draw (5-1,0) circle (1cm);
\draw (1.07-1,-2.85) circle (1cm);
\draw (5.93-1,-2.85) circle (1cm);
\draw (3.5-1,-4.62) circle (1cm);

\node[below] at (3.5-1,-4.3) {\textcolor{red}{$E$}};
\node[below] at (5.93-1,-2.65) {$V_5$};
\node[below] at (4,0.2) {$V_4$};
\node[below] at (1,0.2) {$V_3$};
\node[below] at (1.07-1,-2.65) {$V_2$};
\end{tikzpicture}
}
\caption{The graph $G^1$}\label{G1}
\end{minipage}
\begin{minipage}{0.3\textwidth}
    \centering
    \resizebox{0.9 \textwidth}{!}{%
\begin{tikzpicture}
\coordinate (b1) at (2.59+8,-0.81){};
\coordinate (b2) at (4.41+8,-0.81){};
\coordinate (b3) at (4.98+8,-2.54){};
\coordinate (b4) at (3.5+8,-3.62){};
\coordinate (b5) at (2.02+8,-2.54){};
\draw [blue,line width=2pt](b1) -- (b3);
\draw [blue,line width=2pt](b1) -- (b4);
\draw [blue,line width=2pt](b2) -- (b4);
\draw [blue,line width=2pt](b2) -- (b5);
\draw [blue,line width=2pt](b3) -- (b5);

\coordinate (r1) at (3+8,0);
\coordinate (r2) at (4+8,0);
\coordinate (r3) at (5.31+8,-0.95);
\coordinate (r4) at (5.62+8,-1.9);
\coordinate (r5) at (5.12+8,-3.44);
\coordinate (r6) at (4.07+8,-4.21);
\coordinate (r7) at (2.93+8,-4.21);
\coordinate (r8) at (1.88+8,-3.44);
\coordinate (r9) at (1.38+8,-1.9);
\coordinate (r10) at (1.69+8,-0.95);
\draw [red,line width=2pt](r1) -- (r2);
\draw [red,line width=2pt](r3) -- (r4);
\draw [red,line width=2pt](r5) -- (r6);
\draw [red,line width=2pt](r7) -- (r8);
\draw [red,line width=2pt](r9) -- (r10);
\coordinate (r11) at (1.48+8,-3.77);
\coordinate (r12) at (3.29+8,-5.59);
\draw [red,line width=2pt,dashed](r11) -- (r12);
\coordinate (r13) at (7-1.48+8,-3.77);
\coordinate (r14) at (7-3.29+8,-5.59);
\draw [red,line width=2pt](r13) -- (r14);

\draw (2+8,0) circle (1cm);
\draw (5+8,0) circle (1cm);
\draw (1.07+8,-2.85) circle (1cm);
\draw (5.93+8,-2.85) circle (1cm);
\draw (3.5+8,-4.62) circle (0.7cm);
\draw (3.5+8,-5.8) circle (0.3cm);

\draw[dashed] (3.5+8,-4.9) ellipse (0.8cm and 1.3cm); % 绘制虚线的椭圆

\node[below] at (3.5+8,-5.55) {\textcolor{red}{$B$}};
\node[below] at (3.5+8,-4.42) {$V_1''$};
\node[below] at (5.93+8,-2.65) {$V_5$};
\node[below] at (13,0.2) {$V_4$};
\node[below] at (10,0.2) {$V_3$};
\node[below] at (1.07+8,-2.65) {$V_2$};
\node[below] at (12.5,-5.3) {$V_1$};
\end{tikzpicture}
}
\caption{ The graph $G^2$}\label{G2} 
\end{minipage}
\begin{minipage}{0.3\textwidth}
    \centering
    \resizebox{0.98\textwidth}{!}{%
\begin{tikzpicture}
\coordinate (b1) at (2.59+17,-0.81){};
\coordinate (b2) at (4.41+17,-0.81){};
\coordinate (b3) at (4.98+17,-2.54){};
%\coordinate (b4) at (3.5+17,-3.62){};
\coordinate (b4) at (3.5+17,-4.1){};
\coordinate (b5) at (2.02+17,-2.54){};
\draw [blue,line width=2pt](b1) -- (b3);
\draw [blue,line width=2pt](b1) -- (b4);
\draw [blue,line width=2pt](b2) -- (b4);
\draw [blue,line width=2pt](b2) -- (b5);
\draw [blue,line width=2pt](b3) -- (b5);

\coordinate (r1) at (3+17,0);
\coordinate (r2) at (4+17,0);
\coordinate (r3) at (5.31+17,-0.95);
\coordinate (r4) at (5.62+17,-1.9);
\coordinate (r5) at (5.12+17,-3.44);
\coordinate (r6) at (3.9+17,-4.3);
\coordinate (r7) at (3.1+17,-4.3);
%\coordinate (r6) at (3.95+17,-4.1);
%\coordinate (r7) at (3.05+17,-4.1);
\coordinate (r8) at (1.88+17,-3.44);
\coordinate (r9) at (1.38+17,-1.9);
\coordinate (r10) at (1.69+17,-0.95);
\draw [red,line width=2pt](r1) -- (r2);
\draw [red,line width=2pt](r3) -- (r4);
\draw [dashed,red,line width=2pt](r5) -- (r6);
\draw [dashed,red,line width=2pt](r7) -- (r8);
\draw [red,line width=2pt](r9) -- (r10);

\draw (2+17,0) circle (1cm);
\draw (5+17,0) circle (1cm);
\draw (1.07+17,-2.85) circle (1cm);
\draw (5.93+17,-2.85) circle (1cm);
\draw (3.5+17,-4.6) circle (0.5cm);

\node[below] at (3.5+17,-4.3) {\textcolor{red}{$B$}};
\node[below] at (5.93+17,-2.65) {$V_5$};
\node[below] at (22,0.2) {$V_4$};
\node[below] at (19,0.2) {$V_3$};
\node[below] at (1.07+17,-2.65) {$V_2$};
\end{tikzpicture}
}
\caption{The graph $G^3$}\label{G3}
\end{minipage}
\end{figure}

%---------------------G1--------------------------------
\begin{cons}
 Let $G^1$ be an $n$-vertex graph on vertex set $\bigcup_{i\in[5]} V_i$ obtained as follows.
\begin{itemize}
\item Start with a complete $5$-partite graph on $(V_1,V_2,V_3,V_4,V_5)$ with Ramsey coloring of $K_5$ and $|V_i|=n/5$ for $i\in[5]$. 
\item Embed an $n/5$-vertex $(k,\ell)$-Erd\H{o}s graph into $V_1$ and color all its edges red. 
\end{itemize}
Then $\delta(G^1)=4n/5$ and $\chi(G^1)\ge k$.
\end{cons}

\begin{property}[$G^1$]
  If $H_1   \not\hookrightarrow\C Z_0$ and $H_2 \not\hookrightarrow \C C_5$, then $G_r^1$ is $H_1$-free and $G_b^1$ is $H_2$-free.
\end{property}  

\begin{proof}
  Since $G_b^1\in\C C_5$ and $H_2 \not\hookrightarrow \C C_5$, it follows that $G_b^1$ is $H_2$-free. If $H_1$ can be embedded into $G_r^1$, then let $W_i=V(H_1)\cap V_i$ for $i\in[5]$.\footnote{With a slight abuse of notation, we also denote by $H_1$ a copy of $H_1$ in $G_r^1$.}
  By the definition of $G^1$, for each $2\le i\le 5$, $W_i$ is an independent set.
  Moreover, $H_1[W_1]$ is a forest, since $G^1[V_1]$ has girth greater than $|H_1|$.
  It follows that $H_1 \hookrightarrow \C Z_0$, a contradiction.
\end{proof}

%--------------------------G2------------------------
\begin{cons}
 Let $G^2$ be an $n$-vertex graph on vertex set $\bigcup_{i\in[5]} V_i$ obtained as follows.
\begin{itemize}
\item Start with a complete $5$-partite graph on $(V_1,V_2,V_3,V_4,V_5)$ with Ramsey coloring of $K_5$, $|V_1|=n/9$ and $|V_2|=|V_3|=|V_4|=|V_5| = 2n/9$. 

\item Split $V_1=V'_1 \cup V''_1$ with $|V'_1| = o(n)$ and replace the bipartite graph between $V_1'$ and $V_2$ by a copy of $\mathrm{BH}^-$
so that $(V_1',V_2,V_3,V_4,V_5)$ induces a red $B_1((8/9+o(1))n,k,\ell)$.
\end{itemize}
Then $\delta(G^2)=(7/9-o(1))n$ and $\chi(G^2)\ge k$.
\end{cons}

\begin{property}[$G^2$]
  If $H_1\not\hookrightarrow \C Z_1$ and $H_2\not\hookrightarrow \C C_5$, then $G_r^2$ is $H_1$-free and $G_b^2$ is $H_2$-free.
\end{property}

\begin{proof}
  Since $G_b^2\in\C C_5$ and $H_2\not\hookrightarrow \C C_5$, $G_b^2$ is $H_2$-free. Suppose that $G_r^2$ contains a copy of $H_1$. Let $W_1''=V(H_1)\cap V_1''$.
  Let $H^{*}=H_1[V(H_1)\setminus W_1'']$. Then $H^{*}\subseteq B_1(n,k,\ell)$.
  By \cref{prop:embed BH*}, $H^{*}\hookrightarrow \C Z_1$.
  Since $W_1''$ is an independent set and all its edges to the rest of the graph follow the $C_5$-blowup pattern, by \cref{fact:embde C5} we obtain $H_1 \hookrightarrow \C Z_1$, a contradiction.
\end{proof}

\begin{cons}
 Let $G^3$ be an $n$-vertex graph on vertex set $\bigcup_{i\in[5]} V_i$ obtained as follows.
\begin{itemize}
\item Start with a complete $5$-partite graph on $(V_1,V_2,V_3,V_4,V_5)$ with Ramsey coloring of $K_5$, $|V_1|=o(n)$ and $|V_2|=|V_3|=|V_4|=|V_5| = n/4-o(n)$.

\item Replace the bipartite graphs between $V_1$ and $V_2$, and between $V_1$ and $V_5$, by copies of $\mathrm{BH}^-$
so that $(V_1,V_2,V_3,V_4,V_5)$ induces a red $B_2(n,k,\ell)$.
\end{itemize}
Then $\delta(G^3)=\left(3/4-o(1)\right)n$ and $\chi(G^3)\ge k$.
\end{cons}

\begin{property}[$G^3$]
  If $H_1\not\hookrightarrow \C Z_2$ and $H_2\not\hookrightarrow \C C_5$, then $G_r^3$ is $H_1$-free and $G_b^3$ is $H_2$-free.
\end{property}

\begin{proof}
  Since $G_b^3\in\C C_5$ and $H_2\not\hookrightarrow \C C_5$, $G_b^3$ is $H_2$-free. If $H_1$ embeds into $G_r^3$, then $H_1\subseteq B_2(n,k,\ell)$. By \cref{prop:embed BH*}, this implies $H_1 \hookrightarrow \C Z_2$, a contradiction.
\end{proof}

For the last three constructions $G^4,G^5,G^6$, we do not impose conditions on the embeddabilities of $H_1$ and $H_2$, and instead only use $\delta_{\chi}(H_i)$.
Without loss of generality, we assume that $\delta_{\chi}(H_1)\ge \delta_{\chi}(H_2)$.

\begin{figure}[!ht]
\begin{minipage}{0.3\textwidth}
    \centering
    \resizebox{0.9\textwidth}{!}{%
\begin{tikzpicture}
\coordinate (r1) at (2,5);
\coordinate (r2) at (4,5);
\coordinate (r3) at (2,1);
\coordinate (r4) at (4,1);
\draw [red,line width=2pt](r1) -- (r2);
\draw [red,line width=2pt](r3) -- (r4);

\coordinate (b1) at (1,4);
\coordinate (b2) at (1,2);
\coordinate (b3) at (5,4);
\coordinate (b4) at (5,2);
\coordinate (b5) at (1.71,4.29);
\coordinate (b6) at (4.29,1.71);
\coordinate (b7) at (4.29,4.29);
\coordinate (b8) at (1.71,1.71);
\draw [blue,line width=2pt](b1) -- (b2);
\draw [blue,line width=2pt](b3) -- (b4);
\draw [blue,line width=2pt](b5) -- (b6);
\draw [blue,line width=2pt](b7) -- (b8);

\draw (1,1) circle (1cm);
\draw (5,1) circle (1cm);
\draw (1,5) circle (1cm);
\draw (5,5) circle (1cm);
\node[below] at (1,1.3) {\textcolor{red}{$E$}};
\node[below] at (5,1.2) {$V_2$};
\node[below] at (5,5.2) {$V_3$};
\node[below] at (1,5.2) {$V_4$};
\end{tikzpicture}
}
\caption{The graph $G^4$}\label{G4}
\end{minipage}
\begin{minipage}{0.3\textwidth}
   \centering
    \resizebox{0.76\textwidth}{!}{%
\begin{tikzpicture}
\coordinate (r1) at (3.5+8,5.2);
\coordinate (r2) at (3.5+8,2);
\coordinate (r3) at (2+8,3.5);
\coordinate (r4) at (5+8,3.5);
\coordinate (r5) at (3.5+8,0);
\coordinate (r6) at (3.5+8,-0.5);
\draw [red,line width=2pt](r1) -- (r2);
\draw [red,line width=2pt](r3) -- (r4);
\draw [red,line width=2pt,dashed](r5) -- (r6);

\coordinate (b1) at (2.93+8,5.43);
\coordinate (b2) at (1.71+8,4.21);
\coordinate (b3) at (1.71+8,2.79);
\coordinate (b4) at (2.79+8,1.71);
\coordinate (b5) at (1+8,2.5);
\coordinate (b6) at (3.21+8,-0.59);
\coordinate (b1') at (7-2.93+8,5.43);
\coordinate (b2') at (7-1.71+8,4.21);
\coordinate (b3') at (7-1.71+8,2.79);
\coordinate (b4') at (7-2.79+8,1.71);
\coordinate (b5') at (7-1+8,2.5);
\coordinate (b6') at (7-3.21+8,-0.59);
\draw [blue,line width=2pt](b1) -- (b2);
\draw [blue,line width=2pt](b3) -- (b4);
\draw [blue,line width=2pt](b5) -- (b6);
\draw [blue,line width=2pt](b1') -- (b2');
\draw [blue,line width=2pt](b3') -- (b4');
\draw [blue,line width=2pt](b5') -- (b6');

\draw (1+8,3.5) circle (1cm);
\draw (3.5+8,1) circle (1cm);
\draw (6+8,3.5) circle (1cm);
\draw (3.5+8,6) circle (0.8cm);
\draw (3.5+8,-1) circle (0.5cm);
\node[below] at (3.5+8,-0.7) {\textcolor{red}{$B$}};
\node[below] at (3.5+8,1.2) {$V_2$};
\node[below] at (3.5+8,6.3) {$V_3$};
\node[below] at (9,3.7) {$V_4$};
\node[below] at (14,3.7) {$V_5$};
\end{tikzpicture}
}
\caption{The graph $G^5$}\label{G5}
\end{minipage}
\begin{minipage}{0.3\textwidth}
    \centering
    \resizebox{0.97\textwidth}{!}{%
\begin{tikzpicture}
\coordinate (r1) at (2+17,2+3);
\coordinate (r2) at (4.5+17,2+3);
\draw [red,line width=2pt](r1) -- (r2);

\coordinate (b1) at (1.5+17,1.11+3);
\coordinate (b2) at (2.75+17,-1.03+3);
\coordinate (b3) at (6.5-1.5+17,1.11+3);
\coordinate (b4) at (6.5-2.75+17,-1.03+3);
\draw [blue,line width=2pt](b1) -- (b2);
\draw [blue,line width=2pt](b3) -- (b4);

\draw (1+17,2+3) circle (1cm);
\draw (5.5+17,2+3) circle (1cm);
\draw (3.25+17,-1.9+3) circle (1cm);
\node[below] at (3.25+17,-1.6+3) {\textcolor{red}{$E$}};
\end{tikzpicture}
}
\caption{The graph $G^6$}\label{G6}
\end{minipage}
\end{figure}

%-----------------------------G4-----------------------------

\begin{cons}
 Let $G^4$ be an $n$-vertex graph on vertex set $\bigcup_{i\in[4]}V_i$ obtained as follows.
\begin{itemize}
\item Start with a complete $4$-partite graph on $(V_1,V_2,V_3,V_4)$, where the edges between $V_1$ and $V_2$, and between $V_3$ and $V_4$, are colored red, and all other edges blue, with $|V_i|=n/4$ for $i\in[4]$.

    \item Embed an $n/4$-vertex $(k,\ell)$-Erd\H{o}s graph into $V_1$ and color all its edges red. 
\end{itemize}
Then $\delta(G^4)=3n/4$ and $\chi(G^4)\ge k$.
\end{cons}

\begin{property}[$G^4$]
  If $\delta_{\chi}(H_1)=1/2$, then $G_r^4$ is $H_1$-free and $G_b^4$ is $H_2$-free.
\end{property}

\begin{proof}
Since $G_b^4$ is bipartite, it is $H_2$-free.
Suppose, for a contradiction, that $G^4_r$ contains a copy of $H_1$.
Let $A_i=V(H_1)\cap V_i$ for $i\in[4]$. Since there are no red edges
between $V_1\cup V_2$ and $V_3\cup V_4$, the sets $A_1\cup A_2$ and
$A_3\cup A_4$ are unions of components of this copy. Moreover,
$H_1[A_1]$ is a forest, since $G^4_r[V_1]$ has girth larger than
$|H_1|$, and $H_1[A_3\cup A_4]$ is bipartite.
Then deleting an independent set from $H_1$ leaves a forest. 
Hence $\mathcal M(H_1)$ contains a forest, contradicting
\cref{thm:characterization}, since $\delta_\chi(H_1)=1/2$.
\end{proof}

%----------------G5------------------

\begin{cons}
   Let $G^5$ be an $n$-vertex graph on vertex set $\bigcup_{i\in[5]}V_i$ obtained as follows.
\begin{itemize}
\item Start with a complete bipartite graph on partite sets $V_1\cup V_2\cup V_3$ and $V_4\cup V_5$ with blue edges and $|V_1|= o(n), |V_3| =n/7-o(n), |V_2|=|V_4|=|V_5|=2n/7.$

\item The red graph consists of a copy of $\mathrm{BH}(m,k,\ell)$ on $(V_1,V_2,V_3)$,
where $m=|V_1|+|V_2|+|V_3|=(3/7+o(1))n$, together with a complete bipartite graph on $(V_4,V_5)$.
\end{itemize}
Then $\delta(G^5)=(5/7-o(1))n$ and $\chi(G^5)\ge k$.
\end{cons}

\begin{property}[$G^5$]
  If $\delta_{\chi}(H_1)=1/3$, then $G_r^5$ is $H_1$-free and $G_b^5$ is $H_2$-free.
\end{property}
  
\begin{proof}
Since $G_b^5$ is bipartite, it is $H_2$-free. 
Suppose, for a contradiction, that $G^5_r$ contains a copy of $H_1$.
Let $A=V(H_1)\cap (V_1\cup V_2\cup V_3)$, and $B=V(H_1)\cap (V_4\cup V_5)$.
As $H_1[A]\subseteq \mathrm{BH}(m,k,\ell)$ and $|H_1[A]|<\ell$, \cref{LT}
implies that $H_1[A]$ is near-acyclic. Since $H_1[B]$ is bipartite,
this implies that $H_1$ is near-acyclic, contradicting
\cref{thm:characterization}, since $\delta_\chi(H_1)=1/3$.
\end{proof}

%-------------------------G6-------------------------

\begin{cons}
 Let $G^6$ be an $n$-vertex graph on vertex set $\bigcup_{i\in[3]}V_i$ obtained as follows.
\begin{itemize}
\item Start with a complete $3$-partite graph on $(V_1,V_2,V_3)$ with $|V_i|=n/3$, $i\in[3]$, red edges in $[V_2,V_3]$ and blue edges elsewhere.
    \item Embed an $n/3$-vertex $(k,\ell)$-Erd\H{o}s graph into $V_1$ and color all its edges red. 
\end{itemize}
Then $\delta(G^6)=2n/3$ and $\chi(G^6)\ge k$.
\end{cons}

We can easily derive the following property of $G^6$.
 \begin{property}[$G^6$]
  For any $H$ with $\chi(H)=3$ and $|H|<\ell$, both $G_r^6$ and $G_b^6$ are $H$-free.
\end{property}

\section{Core Lemmas}\label{core}\label{sec: core}

In this section, we will prove the following lemma, which plays a key role in the proof of \cref{thm:main}. 

\begin{lemma}\label{lmm:embed key}
Let $1>d \gg \delta \gg \varepsilon \gg \frac{1}{M}>0$ and $H$ be a graph with $\chi(H)=3$. Then there exists $C=C(H,d,\delta,\varepsilon,M)$ such that the following hold for sufficiently large $n$. Suppose $V_1,V_2,V_3, V_4$ are four disjoint subsets of $V(G)$ with $|V_1|=|V_2|=|V_3|=|V_4| =n$ and $(V_i,V_{i+1})$ is $(\varepsilon,d)$-regular for $i=1,2,3$. Then for any $X\subseteq V(G)$ with $\chi(G[X])>C$ and $|X|\le Mn$, we have the following.
\begin{enumerate}
    \item[\rm (i)]  If $H\hookrightarrow \C Z_0$, and $(X,V_1)$ and $(X,V_4)$ are both $(1/2+\delta)$-dense, then $H \subseteq G$.

    \item[\rm (ii)]  If $H\hookrightarrow \C Z_1$, and $(X,V_1)$ is $\delta$-dense and $(X,V_4)$ is $(1/2+\delta)$-dense, then $H \subseteq G$.

    \item[\rm (iii)]  If $H\hookrightarrow \C Z_2$, and $(X,V_1)$ and $(X,V_4)$ are both $\delta$-dense, then $H \subseteq G$.

%\item[\rm (iv)] If $|X| \le  C'\cdot n$ for some constant $C'$, we can not require $X$ is disjoint from $V_1,V_2,V_3,V_4$.
\end{enumerate}
\end{lemma}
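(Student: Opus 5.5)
The plan is to realise the target configuration of $\C Z_i$ directly inside $G$. By definition, $H\hookrightarrow \C Z_i$ means $H\subseteq Z_i^{k,t}$ for some $k,t$. This graph is an $s$-blowup of $C_5$ on $(V_1',\dots,V_5')$ in which the class $V_1'$ carries a forest $\mathcal F_k^t$ (for $i=0$), or the pair $(V_1',V_2')$ (and, for $i=2$, also $(V_1',V_5')$) is replaced by a Zykov graph $Z_k^t(A,U)$.

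I will map the special class $V_1'$ into $X$. The remaining four classes $V_2',V_3',V_4',V_5'$ go into $V_1,V_2,V_3,V_4$ respectively, so that the regular path $V_1V_2V_3V_4$ closes into a $5$-cycle through $X$. It therefore suffices to find the following inside $X\cup V_1\cup V_4$:
\begin{itemize}
\item a copy of $\mathcal F_k^t$ (for (i)) or of the $A$-part of $Z_k^t$ (for (ii), (iii)) in $X$;
\item its attachment sets $S_1\subseteq V_1$ and $S_4\subseteq V_4$ of size $s$. These are complete to $A$ in the $\C C_5$-type cases; in the Zykov cases they instead play the role of $U$, with the prescribed adjacency pattern;
\item enough room to complete $S_1$ and $S_4$ to a copy of $K_{s,s}$-chains $S_1$--$T_2$--$T_3$--$S_4$ with $T_j\subseteq V_j$.
\end{itemize}
The last point is handled by regularity. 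If every vertex of $S_1$ and of $S_4$ is \emph{typical}, then the common neighbourhood of $S_1$ in $V_2$ and that of $S_4$ in $V_3$ have size at least $(d-\varepsilon)^s n$. Typical here means that its degree into the next cluster, and into the common neighbourhoods of previously chosen typical vertices, is at least $(d-\varepsilon)$ times the expected amount. By the Slicing lemma (\cref{fact:slicinglemma}) the pair between these two common neighbourhoods is still regular with positive density, so it contains a $K_{s,s}$. Hence before anything else I will delete the at most $O(s\varepsilon n)$ atypical vertices of $V_1$ and $V_4$, and work only with typical $s$-tuples. Since $\delta\gg\varepsilon$, the density hypotheses on $(X,V_1)$ and $(X,V_4)$ survive with $\delta$ replaced by $\delta/2$.

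For (iii) I will use the same mechanism as \cref{lmm:0-1/2}. From $\delta$-density of $(X,V_1)$ and $(X,V_4)$ and $\chi(G[X])>C$, I want a forest $A\subseteq X$ together with two independent Zykov "$U$-systems", one in $V_1$ and one in $V_4$, sharing the same $A$. Concretely, I will run the Allen et al.\ argument (the part of \cref{lmm:0-1/2} producing $Z_k^t(A,U)$ with $(A,U)\subseteq (X,Y)$) twice in succession. First apply it with $Y=V_1$ to a high-chromatic subset of $X$. Then repeat with $Y=V_4$ inside the subfamily of forest copies found. This is possible because that argument actually yields many candidate copies of $\mathcal F_k^t$ in a high-chromatic subset of $X$, with linear-size codegree structure. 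To make the two rounds compatible, I will first partition $X$ according to a bounded "neighbourhood profile" of each vertex: which of boundedly many random pieces of $V_1$ and $V_4$ it is dense to, with the number of classes bounded in terms of $\delta$ and $M$ (using $|X|\le Mn$). I then pass to a class of chromatic number at least $C/\#\text{classes}$.

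For (ii) the $V_4$-side is handled by pair-density. Here $(1/2+\delta)$-density gives every pair $x,y\in X$ at least $2\delta n$ common neighbours in $V_4$. So the second half of \cref{lmm:0-1/2} applies with $Z=V_4$, yielding $A$ with a common neighbourhood $S$ in $V_4$. Meanwhile the $V_1$-side supplies the Zykov $U$. For (i) both sides are pair-dense, and I only need $\mathcal F_k^t\subseteq X$ with $s$ common typical neighbours in each of $V_1$ and $V_4$. To guarantee every vertex of the forest has this, I intend to embed the forest greedily, tree by tree. Each new vertex is chosen from a high-chromatic residual set whose common neighbourhood with the already chosen vertices remains of linear size in both $V_1$ and $V_4$, iterating pair-density within one profile class.

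The main obstacle I expect is precisely this quantitative point. \cref{lmm:0-1/2} as stated only guarantees a common neighbourhood $S$ of size $t$. I need instead linear-size, typical common neighbourhoods that are simultaneously compatible with the regular path on both ends. I will address this by the profile-partition/pigeonhole step, which converts density into containment in a fixed dense piece of $V_1$ and of $V_4$, before invoking the Zykov/forest embedding. Once that is done, \cref{fact:embde C5} absorbs any extra independent vertices of $H$, and the embedding lemma (\cref{thm:embedding-lemma}) finishes the job.
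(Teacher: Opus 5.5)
You have the right global picture: send the special class into $X$ and close the $C_5$ through the regular path $V_1V_2V_3V_4$. Your treatment of (ii) is exactly \cref{lmm:0-1/2} applied to $(X,V_1,V_4)$, up to the closing step. But the closing step itself fails.

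Typicality of an $s$-tuple is defined relative to its prefix, so it is not a property of single vertices. For each fixed prefix fewer than $\varepsilon n$ extensions are bad, but you cannot delete $O(s\varepsilon n)$ vertices so that every remaining $s$-tuple is typical. For example, take $V_1=V_2=\mathbb{F}_2^r\setminus\{0\}$ with $u\sim v$ iff $u\cdot v=1$. This pair is quasirandom, hence $\varepsilon$-regular of density about $1/2$ for large $r$. Yet $u_1,u_2,u_1+u_2$ never have a common neighbour, and every set containing more than two thirds of $V_1$ contains such a triple. Since \cref{lmm:0-1/2} only hands you some $t$-set $S$ (or some $U$-part) that you cannot steer, nothing guarantees $N(S)\cap V_2\neq\varnothing$. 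Partitioning $X$ by density to random pieces of $V_1,V_4$ does not change this.

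The missing idea is \cref{lmm:re-par}, which reverses the order of choices:
\begin{itemize}
\item First fix $t$-sets $W_2\subseteq V_2$ and $W_3\subseteq N(W_2)\cap V_3$.
\item Then take linear-size pieces $S_i\subseteq V_1$ and $T^i_j\subseteq V_4$ that are \emph{entirely} complete to $W_2$, resp.\ $W_3$. These cover all but $\varepsilon n$ vertices of $V_1$, and, separately for each $i$, of $V_4$.
\item By averaging, each $x\in X$ lies in some class $X_{i,j}$ keeping density $1/2+\delta-\varepsilon$ (resp.\ $\delta-\varepsilon$) to $S_i$ and $T^i_j$, and some class is highly chromatic.
\end{itemize}
Now any attachment sets of size $t$ inside $S_i$ and $T^i_j$ close the $C_5$ automatically. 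So the weak conclusion $|S|\ge t$ of \cref{lmm:0-1/2} is all you need.

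Second, the two-sided structures required in (i) and (iii) are not justified. Pair-density says nothing about common neighbourhoods of three or more vertices: three sets of density $1/2+\delta$ can have empty intersection. Nothing keeps your greedy residual set highly chromatic. Producing a forest with common neighbours is the non-trivial content of the pair-dense part of \cref{lmm:0-1/2}, and that gives only one side. Likewise, ``running the Allen et al.\ argument twice'' yields one $A$ with no guarantee on the other side.

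The paper's device (\cref{lmm:xiaobaima}) is a coupling. Fix a random bijection $\sigma$ between $S_i$ and $T^i_j$. Replace each matched pair $(x_\ell,y_{\sigma(\ell)})$ by one auxiliary vertex $z_\ell$, adjacent to $w$ iff $w$ is adjacent to both $x_\ell$ and $y_{\sigma(\ell)}$. A hypergeometric tail bound and a union bound over $W$ show that $W$ remains dense (resp.\ pair-dense) to $Z=\{z_\ell\}$. Then a \emph{single} application of \cref{lmm:0-1/2} in this auxiliary graph gives either two Zykov graphs sharing the same $A$ (for (iii)) or a forest with $t$ common neighbours on both sides (for (i)).

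The union bound needs $|W|\le|S_i|=\alpha n$, and this is where $|X|\le Mn$ enters. You split $X_{i,j}$ into about $M/\alpha$ pieces of size at most $\alpha n$ and keep one of large chromatic number.
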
  

We also record two corollaries of the following simple result, which will be useful later.       

\begin{prop}\label{prop:intersection}
   Let $ \alpha \in (0,1)$ and $V_1, V_2, \cdots, V_m$ be subsets of $[n]$ each with size at least $\alpha n$. 
   If $m\ge 2/\alpha$, then there exist $1\le i<j\le m$ such that $|V_i\cap V_j|\ge \frac{\alpha^2}{4}n.$
\end{prop}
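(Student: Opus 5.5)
This is a pigeonhole/double-counting argument on the total size of the sets. Suppose, for contradiction, that all pairwise intersections satisfy $|V_i\cap V_j|<\frac{\alpha^2}{4}n$. It suffices to consider only the first $m_0:=\lceil 2/\alpha\rceil\le m$ sets, since discarding sets only makes the hypothesis harder to satisfy.

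The plan is to apply Bonferroni's inequality (inclusion–exclusion truncated after two terms) to $V_1,\dots,V_{m_0}\subseteq[n]$:
\[
n\ \ge\ \Big|\bigcup_{i\le m_0}V_i\Big|\ \ge\ \sum_{i\le m_0}|V_i|-\sum_{i<j\le m_0}|V_i\cap V_j|\ >\ m_0\alpha n-\binom{m_0}{2}\frac{\alpha^2}{4}n .
\]
Dividing by $n$ and writing $x:=m_0\alpha$, the bound becomes
\[
1\ >\ x-\frac{m_0(m_0-1)}{2}\cdot\frac{\alpha^2}{4}\ \ge\ x-\frac{x^2}{8}.
\]

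It remains to check that this is impossible for the relevant range of $x$. We have $x\ge 2$, and since $m_0<2/\alpha+1$, also $x<2+\alpha<3$. On the interval $[2,3]$ the function $f(x)=x-x^2/8$ is increasing, because its derivative $1-x/4$ is positive for $x<4$. Hence $f(x)\ge f(2)=2-\tfrac12=\tfrac32>1$, contradicting the displayed inequality.

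The only delicate step is this truncation: one must not use all $m$ sets, since for large $m$ the quadratic term $\binom{m}{2}$ dominates and the bound becomes useless. Restricting to exactly $\lceil 2/\alpha\rceil$ sets keeps $x$ in $[2,3)$, and this is precisely what makes the argument work.

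As an alternative, one can count pairs directly via $\sum_k d_k$, where $d_k$ is the number of sets containing $k\in[n]$, and apply Cauchy–Schwarz. This yields $\sum_{i\ne j}|V_i\cap V_j|\ge m_0^2\alpha^2 n-m_0\alpha n$, and averaging over the pairs gives the same conclusion. I will use the Bonferroni route, as it is shorter.
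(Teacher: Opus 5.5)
Your proof is correct and is essentially the paper's argument: restrict to the first $\lceil 2/\alpha\rceil$ sets and apply inclusion--exclusion truncated after the pairwise term to get a contradiction. The only difference is in the final numerical step, where your substitution $x=m_0\alpha\in[2,3)$ and the monotonicity of $x-x^2/8$ make the estimate more explicit than the paper's one-line bound.
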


\begin{proof}
   Suppose to the contrary that for any  $1\le i<j\le m$, we have $|V_i\cap V_j| < \alpha^2n/4$.
    By the inclusion-exclusion principle, we have 
\[n \ge \left| \bigcup_{i=1}^{\ceil{2/\alpha}} V_i \right|\ge \sum_{i=1}^{\ceil{2/\alpha}} |V_i| - \sum_{1\le i< j \le \ceil{2/\alpha}} |V_i\cap V_j| > \frac{2}{\alpha} \alpha n - \binom{\ceil{2/\alpha}}{2} \frac{\alpha^2}{4}n >n,
\]
a contradiction. 
\end{proof}

Iterating the above proposition yields the following two corollaries.
\begin{cor}\label{lmm:in-ex}
    Let $t\in \mathbb{N}$ and $\varepsilon\in (0,1)$. Then there exist $\alpha =\alpha(\varepsilon,t)$ and $m=m(\varepsilon,t)$ such that for any $V_1, V_2, \cdots, V_m\subseteq [n]$ each with size at least $\varepsilon n$, there exist $1\le i_1< i_2<\dots < i_t\le m$ with $\left|\bigcap_{j=1}^{t} V_{i_j}\right| \ge \alpha n$.
\end{cor}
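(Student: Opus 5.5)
We argue by induction on $t$, applying \cref{prop:intersection} repeatedly and passing to a subfamily of pairwise intersections at each step. For $t=1$ there is nothing to prove: take $m=1$ and $\alpha=\varepsilon$. For $t=2$, \cref{prop:intersection} with $\alpha=\varepsilon$ already gives the result with $m(\varepsilon,2)=\lceil 2/\varepsilon\rceil$ and $\alpha(\varepsilon,2)=\varepsilon^2/4$.

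For the inductive step, assume the statement holds for $t$ with parameters $\alpha_t=\alpha(\varepsilon,t)$ and $m_t=m(\varepsilon,t)$. To get $t+1$ sets, we first extract many disjoint-index pairs with large intersection. Set $p:=\lceil 2/\varepsilon\rceil$ and take $m$ large. Greedily, among any $p$ unused indices, \cref{prop:intersection} yields two indices $i<j$ with $|V_i\cap V_j|\ge \varepsilon^2 n/4$. We remove these two indices and repeat. Starting from $m\ge 2m' + p$ sets, we obtain $m'$ pairs with pairwise disjoint index sets. The intersections $W_1,\dots,W_{m'}$ of these pairs each have size at least $\varepsilon' n$, where $\varepsilon'=\varepsilon^2/4$.

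Next we combine one set from the $t$-case with one extra set. This is cleaner than iterating pair-intersections, which would only produce $2^s$-fold intersections; that would also suffice by monotonicity, since $t\le 2^s$, and an intersection of more sets is contained in an intersection of any $t$ of them. So we proceed by doubling: applying the pair-extraction $s=\lceil\log_2 t\rceil$ times gives a set that is an intersection of $2^s\ge t$ distinct original sets $V_i$. Its size is at least $\alpha n$, where $\alpha$ is obtained by iterating the map $x\mapsto x^2/4$ a total of $s$ times starting from $\varepsilon$. The number $m$ needed is defined recursively by $m_0=1$ and $m_{j+1}=2m_j+\lceil 2/\varepsilon_j\rceil$, where $\varepsilon_0=\varepsilon$ and $\varepsilon_{j+1}=\varepsilon_j^2/4$.

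Finally, the intersection over $2^s$ distinct indices is contained in the intersection over any $t$ of them. Choosing those $t$ indices and listing them in increasing order gives $i_1<\dots<i_t$ with $|\bigcap_j V_{i_j}|\ge\alpha n$. The only point requiring care is bookkeeping. We must check that the indices in different pairs are disjoint, so that the final $2^s$ indices are distinct. We must also check that the constants depend only on $\varepsilon$ and $t$. Neither step poses a real obstacle.
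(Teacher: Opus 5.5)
Your doubling argument is correct and is the iteration of \cref{prop:intersection} that the paper has in mind (the paper only says ``iterating the above proposition''); the opening ``induction on $t$'' framing, which you abandon, can simply be deleted. The one slip is that the $\varepsilon$-indices in your recursion for $m$ are reversed. The round that pairs sets of size at least $\varepsilon_j n$ needs $\lceil 2/\varepsilon_j\rceil$ spare sets at that level, so you should take $N_s=1$, $N_j=2N_{j+1}+\lceil 2/\varepsilon_j\rceil$ for $j=s-1,\dots,0$, and $m=N_0$. Your $m_s$ is smaller than your own greedy count requires once $s\ge 2$ (for $s=2$ and small $\varepsilon$ it is about $8/\varepsilon^2$ instead of about $16/\varepsilon^2$), though this does not affect the existence claim.
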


%The following result is a straightforward corollary.
\begin{cor}\label{coro: large common}
  Let $t\in \mathbb{N}$ and $\varepsilon\in (0,1)$, and let $n$ be a sufficiently large integer. Then there exist $\beta = \beta(\varepsilon,t)$ and $C=C(\varepsilon,t)$ such that the following holds.
Let $X$, $V_1$ and $V_2$ be three disjoint subsets of vertices of an $n$-vertex graph $G$ with $|X|\ge C$.
If $(X,V_1)$ and $(X,V_2)$ are both $\varepsilon$-dense, then there exists a subset $W\subseteq X$ with $|W|=t$ and $|N(W)\cap V_1|\ge \beta|V_1|$ and $|N(W)\cap V_2|\ge \beta|V_2|$.
\end{cor}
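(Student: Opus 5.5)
The corollary to prove is \cref{coro: large common}: the proof is a direct application of \cref{lmm:in-ex}, applied twice in a nested way (once for $V_1$, once for $V_2$).

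\textbf{Step 1 (neighbourhoods in $V_1$).} Apply \cref{lmm:in-ex} with parameters $(\varepsilon,t')$, identifying $V_1$ with $[|V_1|]$, to the neighbourhoods $N(x)\cap V_1$ for $x\in X$. Here $t'$ is a large integer fixed in Step 3. Since $(X,V_1)$ is $\varepsilon$-dense, each of these sets has size at least $\varepsilon|V_1|$. Set $m_1=m(\varepsilon,t')$ and $\alpha_1=\alpha(\varepsilon,t')$. As long as $|X|\ge m_1$, any $m_1$ vertices of $X$ contain a $t'$-subset $W_1$ with $|N(W_1)\cap V_1|\ge \alpha_1|V_1|$.

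\textbf{Step 2 (neighbourhoods in $V_2$).} Inside $W_1$, apply \cref{lmm:in-ex} with parameters $(\varepsilon,t)$ to the sets $N(x)\cap V_2$ for $x\in W_1$. Each has size at least $\varepsilon|V_2|$ by $\varepsilon$-density of $(X,V_2)$. Choosing $t'\ge m(\varepsilon,t)$, this yields a $t$-subset $W\subseteq W_1$ with $|N(W)\cap V_2|\ge \alpha(\varepsilon,t)|V_2|$.

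\textbf{Step 3 (monotonicity and constants).} Since $W\subseteq W_1$, we have $N(W)\supseteq N(W_1)$, so $|N(W)\cap V_1|\ge \alpha_1|V_1|$ still holds. Take
\[
t'=\max\{t,\,m(\varepsilon,t)\},\qquad C=m(\varepsilon,t'),\qquad \beta=\min\{\alpha(\varepsilon,t'),\,\alpha(\varepsilon,t)\}.
\]

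\textbf{Main obstacle.} The only real subtlety is the size of $V_1$ and $V_2$. \cref{lmm:in-ex} is stated for subsets of $[n]$ with $n$ large, and $|V_i|$ must be large enough that its conclusion is meaningful. This comes from the ``$n$ sufficiently large'' hypothesis, where $n$ is read as the relevant part size; if the parts could be small, we would instead note that \cref{prop:intersection} is a counting statement valid for every ground-set size. Otherwise the argument is routine.

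It remains to justify \cref{lmm:in-ex} from \cref{prop:intersection}, by induction on $t$. The case $t=1$ is trivial. For the inductive step, repeatedly apply \cref{prop:intersection} to disjoint blocks of $\lceil 2/\varepsilon\rceil$ sets. Each block yields a pair whose intersection has size at least $\varepsilon^2 n/4$. The resulting intersections form a new family of sets, each of size at least $(\varepsilon^2/4)n$. Applying the induction hypothesis to this family with parameter $\varepsilon^2/4$ and $t-1$ gives $t-1$ pairwise intersections with a common intersection of size at least $\alpha n$, so at least $2(t-1)\ge t$ original indices share a common intersection of size at least $\alpha n$. Discarding extra indices only enlarges the intersection.
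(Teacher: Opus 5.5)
Your proposal is correct and takes essentially the paper's route. The paper only says both corollaries follow by iterating \cref{prop:intersection}, and in the proof of \cref{claim:noin} it uses exactly your nested double application of \cref{lmm:in-ex}: first to the $V_1$-neighbourhoods, then to the resulting sub-collection for $V_2$, with monotonicity of common neighbourhoods preserving the $V_1$ bound.
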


\subsection{Proof of Lemma~\ref{lmm:embed key}}

We shall derive~\cref{lmm:embed key} from the following two lemmas, \cref{lmm:re-par} and \cref{lmm:xiaobaima}. To state \cref{lmm:re-par}, we need to introduce some definitions.
\begin{definition}[$t$-goodness]
 Let $t$ be an integer and $G$ be a graph.
 Suppose $V_1,V_2,V_3,V_4$ are four disjoint subsets of $V(G)$.
 For $S\subseteq V_1$ and $T\subseteq V_4$,  we say a pair  $(S,T)$ is \emph{$t$-good} with respect to $(V_2,V_3)$ if there exist subsets $W_2\subseteq V_2$ and $W_3\subseteq V_3$ with $|W_2|=|W_3|=t$ such that all of $G[S,W_2]$, $G[W_2,W_3]$, and $G[W_3,T]$ are complete bipartite graphs.
\end{definition}

Let $\alpha,\varepsilon>0$ be two real numbers.
Given $X\subseteq V(G)$, we say a partition $X=S_0\cup S_1\cup S_2\cup\cdots\cup S_p$ is an \emph{$(\alpha,\varepsilon)$-partition} (of $X$ in $G$) if $|S_1| =|S_2| =\cdots = |S_p| = \alpha |X|$, and $|S_0| < \varepsilon |X|$.

\begin{lemma}\label{lmm:re-par}
 Let $\varepsilon,d$ be two real numbers and $n,t$ be two integers with $1>d,1/t\gg \varepsilon\gg 1/n>0$. Let $G$ be a graph.
 Suppose that $V_1, V_2, V_3,V_4$ are four disjoint subsets of $V(G)$ with $|V_1|=|V_2|=|V_3|=|V_4|=n$ such that $(V_i, V_{i+1})$ is $(\varepsilon, d)$-regular for $i = 1, 2, 3$.
Then, there exists a real number $\alpha:=\alpha(\varepsilon,d,t)<\varepsilon$ and an $(\alpha, \varepsilon)$-partition $V_1=S_0 \cup S_1 \cup S_2 \cup \cdots \cup S_p$ such that the following holds.
\begin{itemize} 
    \item For each $i \in [p]$, there is an $(\alpha, \varepsilon)$-partition $V_4 = T_{0}^i \cup T_{1}^i \cup T_{2}^i \cup \cdots \cup T_{p}^i$, such that $(S_i, T_{j}^i)$ is $t$-good with respect to $(V_2,V_3)$ for any $j \in [p]$.
\end{itemize}  
\end{lemma}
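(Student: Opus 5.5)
We will build the partitions greedily, using regularity together with the supersaturation behind the Kővári–Sós–Turán bound. The basic step is the following claim. For any $S\subseteq V_1$ and $T\subseteq V_4$ with $|S|,|T|\ge \varepsilon n$, there exist $S'\subseteq S$ and $T'\subseteq T$ with $|S'|,|T'|\ge \gamma n$ such that $(S',T')$ is $t$-good; here $\gamma=\gamma(d,t)>0$ does not depend on $\varepsilon$.

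To prove the claim, first apply the Slicing lemma (\cref{fact:slicinglemma}) to the pairs $(S,V_2)$ and $(V_3,T)$. Typical vertices of $V_2$ have at least $(d-\varepsilon)|S|$ neighbours in $S$, and typical vertices of $V_3$ have at least $(d-\varepsilon)|T|$ neighbours in $T$.

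Next, use the regularity of $(V_2,V_3)$, which has density at least $d$, together with a standard counting of copies of $K_{t,t}$. This gives $\Omega(n^{2t})$ copies $(W_2,W_3)$ of $K_{t,t}$ with $W_2\subseteq V_2$ and $W_3\subseteq V_3$ consisting of typical vertices. Moreover, for each such copy, $|N(W_2)\cap S|\ge (d-\varepsilon)^t|S|/2$ and $|N(W_3)\cap T|\ge (d-\varepsilon)^t|T|/2$. This follows by embedding vertex by vertex: regularity of $(S,V_2)$ lets us keep the common neighbourhood in $S$ large at each step, and the same holds on the $T$ side.

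Fixing one such copy, set $S'=N(W_2)\cap S$ and $T'=N(W_3)\cap T$. Then $G[S',W_2]$, $G[W_2,W_3]$ and $G[W_3,T']$ are all complete bipartite, so $(S',T')$ is $t$-good. Their sizes are at least $\tfrac12 (d-\varepsilon)^t\varepsilon n$. This is proportional to $\varepsilon$, so it is not independent of $\varepsilon$ as stated, but that is harmless. Since $t$-goodness is inherited by subsets, we may trim $S'$ and $T'$ to a common size $\alpha n$ for a suitable fixed $\alpha=\alpha(\varepsilon,d,t)<\varepsilon$.

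The partitions are then produced by a two-level greedy procedure. On the outer level we maintain the leftover $L=V_1\setminus(S_1\cup\dots\cup S_i)$. At each step we need a new $S_{i+1}\subseteq L$ of size $\alpha n$ that is good with respect to a partition of all of $V_4$. The difficulty is that a single $S$ must work for many disjoint $T_j$ simultaneously.

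To handle this, iterate the claim inside $L$. Starting with $S=L$ and $T=V_4$, find $S^{(1)}\subseteq S$ and $T_1$. Then apply the claim to $(S^{(1)},V_4\setminus T_1)$ to obtain $S^{(2)}\subseteq S^{(1)}$ and $T_2$, and continue. Each round shrinks $S$ by a factor of at least $\tfrac12(d-\varepsilon)^t$, and we stop once $V_4$ minus the chosen $T_j$'s has fewer than $\varepsilon n$ vertices. This requires about $1/\alpha$ rounds.

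Shrinking by a constant factor $1/\alpha$ times leaves $S$ of size roughly $\big(\tfrac12(d-\varepsilon)^t\big)^{1/\alpha}n$. This size must be at least $\alpha n$, which is circular, and resolving it is \textbf{the main obstacle}. The first approach is to make each round cost nothing on the $S$ side. In round $j$ we take $S^{(j)}=N(W_2^{(j)})\cap S^{(j-1)}$, and this intersection drops by the constant factor each time. The fix is to choose the $K_{t,t}$ copies for all rounds so that their $W_2$ parts share common neighbourhoods in $L$.

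Concretely, take $m$ copies $(W_2^{(j)},W_3^{(j)})$ whose neighbourhoods $N(W_3^{(j)})\cap V_4$ together cover most of $V_4$. By \cref{lmm:in-ex}, applied to the sets $N(W_2^{(j)})\cap L$, each of size at least $c|L|$, many of them have a large common intersection. More simply, we avoid the circularity by letting $\alpha$ depend only on the number of rounds, which we bound independently of $\alpha$. With $T_j$ taken of size $\gamma n$, where $\gamma=\Theta((d-\varepsilon)^t\varepsilon)$, only $O(1/\gamma)$ rounds are needed, and we then set $\alpha=\gamma^{O(1/\gamma)}\varepsilon$. Finally, we cut the $T_j$'s into pieces of size $\alpha n$; these pieces remain good because goodness passes to subsets, which gives the required $(\alpha,\varepsilon)$-partition of $V_4$.

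Given this, the outer greedy step continues as long as $|L|\ge\varepsilon n$, producing $S_{i+1}$ of size $\alpha n$ together with its partition $T^{i+1}$. This yields the $(\alpha,\varepsilon)$-partition of $V_1$. We pad with extra indices so that the number of parts $p$ is the same for every partition; all parts have the same size $\alpha n$, so their counts agree up to the size of the leftover sets $S_0$ and $T_0^i$.
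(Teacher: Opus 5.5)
\textbf{Gap: the iteration on $S$ drops below the regularity threshold.} Your basic claim needs $|S|\ge\varepsilon n$, because $\varepsilon$-regularity of $(V_1,V_2)$ says nothing about subsets of $V_1$ smaller than $\varepsilon n$.

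In the iteration, however, your only bound is $|S^{(j)}|\ge\bigl(\tfrac12(d-\varepsilon)^t\bigr)^j|L|$. This is below $\varepsilon n$ already for $j=1$ when $|L|$ is of order $\varepsilon n$. It is certainly below $\varepsilon n$ long before the $O(1/\gamma)$ rounds you need. From that point on, nothing guarantees a new $t$-set $W_2\subseteq V_2$ with a large common neighbourhood in $S^{(j)}$, so rounds $2,3,\dots$ are unjustified.

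Fixing the number of rounds and setting $\alpha=\gamma^{O(1/\gamma)}\varepsilon$ only repairs the size bookkeeping; it does not restore the regularity hypothesis. Your alternative via \cref{lmm:in-ex} does not close the gap either. That corollary gives a large common intersection only for some bounded sub-family of the sets $N(W_2^{(j)})\cap L$. The corresponding sub-family of $W_3^{(j)}$'s need no longer have neighbourhoods covering most of $V_4$.

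The same threshold problem affects your vertex-by-vertex construction of $W_2$ inside the basic claim when $|S|$ is close to $\varepsilon n$. There you should instead apply the pigeonhole statement \cref{coro: large common} to the typical vertices of $V_2$; this needs regularity applied to $S$ only once.

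\textbf{Missing idea: $S$ never needs to shrink.} The witnesses in the definition of $t$-goodness may depend on $T_j$, and only $W_3$ has to change. The paper proceeds as follows.
\begin{enumerate}
\item Pick one $t$-set $W_2\subseteq V_2$ whose common neighbourhood is large both in $L$ (at least $\beta|L|$) and in $V_3$ (at least $\beta n$). This comes from applying \cref{coro: large common} to the vertices of $V_2$ with density at least $d-2\varepsilon$ into both $L$ and $V_3$.
\item Fix $S\subseteq N(W_2)\cap L$ with $|S|=\alpha n$, and keep $S$ and $W_2$ fixed from now on.
\item For each leftover $Y\subseteq V_4$ with $|Y|\ge\varepsilon n$, note that $N(W_2)\cap V_3$ still has size at least $\beta n\ge 2\varepsilon n$. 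So regularity of $(V_3,V_4)$ together with \cref{lmm:in-ex} gives a fresh $t$-set $W_3\subseteq N(W_2)\cap V_3$ with at least $\gamma|Y|$ common neighbours in $Y$.
\item Any $T\subseteq N(W_3)\cap Y$ of size $\alpha n$ makes $(S,T)$ $t$-good with witnesses $(W_2,W_3)$. Completeness of $G[W_2,W_3]$ is automatic, since $W_3\subseteq N(W_2)$.
\end{enumerate}
Because $S$ is never touched, the greedy choice of $T_1,T_2,\dots$ can run for as many rounds as needed. The constant $\alpha\ll\beta\varepsilon,\gamma\varepsilon$ is independent of that number, so no circularity arises.
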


\begin{lemma}\label{lmm:xiaobaima}
    Let $k$ and $t$ be two positive integers.
    For any $\delta >0$, there exist a constant $C$ and a sufficiently large integer $m$ such that
    for any  graph $G$, and three disjoint subsets $W,V_1,V_2$ of $V(G)$  with $\chi(G[W]) \ge C$ and $|W|\le |V_1|=|V_2|=m$, we have the following.
\begin{itemize}
    \item[\rm{(i)}] If $(W,V_1)$ and $(W,V_2)$ are both $\delta$-dense,
    then $G$ contains two copies of $Z^t_k$ sharing the same $A$-part, say $Z^t_k(A,U_1),Z^t_k(A,U_2)$, with $(A,U_1,U_2)\subseteq(W,V_1,V_2)$.

    \item[\rm{(ii)}] If $(W,V_1)$ and $(W,V_2)$ are both $(1/2+\delta)$-dense, 
    then there exists a copy of $\C F^t_k$ in $G[W]$, say $T$, such that $|N(T) \cap V_1|\ge t$ and $|N(T) \cap V_2|\ge t$.

    \item[\rm{(iii)}] If $(W,V_1)$ is $\delta$-dense and $(W,V_2)$ is $(1/2+\delta)$-dense,
     then there exists a copy of $Z^t_k(A,U)$ with $(A,U) \subseteq(W,V_1)$ and a vertex set $S \subseteq V_2$ with $|S| \ge t$ such that $G[A,S]$ is  complete bipartite.
\end{itemize}
\end{lemma}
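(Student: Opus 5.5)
The plan is to prove the three parts separately, reducing each to the one-color embedding machinery of Lemma~\ref{lmm:0-1/2}, and driving everything by repeated passage to high-chromatic subsets. The basic principle is this: if $\chi(G[W])\ge C$ and $W$ is covered by boundedly many sets, one of them still has chromatic number at least $C$ divided by the number of sets. We also take $W$ much smaller than $m$, which is allowed because $|W|\le m$.

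\textbf{Part (iii).} This is essentially Lemma~\ref{lmm:0-1/2} with the roles $(X,Y,Z)=(W,V_1,V_2)$. The only missing hypothesis is that $(W,V_2)$ should be $\delta'$-pair-dense rather than $(1/2+\delta)$-dense. Any two vertices $x,y\in W$ have at least $2(1/2+\delta)m-m=2\delta m$ common neighbours in $V_2$. Hence $(W,V_2)$ is $2\delta$-pair-dense, and Lemma~\ref{lmm:0-1/2} applied with $2\delta$ in place of $\delta$ directly yields the copy of $Z^t_k(A,U)$ and the set $S$.

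\textbf{Part (ii).} Here we need a copy of $\mathcal F^t_k$ inside $W$ whose common neighbourhoods in $V_1$ and in $V_2$ both have size at least $t$. The difficulty is that a forest of bounded size $s=|\mathcal F^t_k|$ has only a bounded number of vertices, each of which sees more than half of each $V_j$. We iterate the pair-density argument, refining by neighbourhood profiles.

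Step 1: partition $V_1$ and $V_2$ into $L$ equal blocks, with $L$ large depending on $\delta$ and $s$. Each vertex $x\in W$ then receives a profile recording, up to precision $\delta/10$, the fraction of each block lying in $N(x)$. There are boundedly many profiles, so some profile class $W'$ still has huge chromatic number.

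Step 2: inside $W'$, find a copy of $\mathcal F^t_k$. This is possible because a graph of large chromatic number contains every forest of bounded size, or via Lemma~\ref{lmm:0-1/2} applied with the $A$-part. We then need the vertices of this copy to have a common neighbourhood of size at least $t$ in each $V_j$.

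Step 3: make the common neighbourhood large. Vertices sharing a profile and each hitting more than half of $V_j$ do not automatically have a large common intersection once there are more than two of them. So the approach is instead to use the standard dependent-random-choice/pair-density trick. Choose $t$ random vertices $R_j$ from $V_j$ for $j=1,2$, and restrict to $W_R=\{x\in W: R_1\cup R_2\subseteq N(x)\}$. Each $x\in W$ lies in $W_R$ with probability at least $(1/2+\delta)^{2t}$. Averaging over a fractional colouring shows that some choice of $R=(R_1,R_2)$ has $\chi(G[W_R])\ge C(1/2+\delta)^{2t}$. This is a standard averaging step, taken from Allen et al.

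Step 4: embed $\mathcal F^t_k$ in $G[W_R]$; its common neighbourhood contains $R_1$ in $V_1$ and $R_2$ in $V_2$. In fact this averaging argument does not even need density above $1/2$, which is suspicious. The averaging of chromatic number is the obstacle: chromatic number is not additive over random subsets. The fix is to cover $W$ by the sets $W_R$ over all $t$-subsets $R$ and use that $\chi$ is subadditive under covers. However, $W$ is not covered by boundedly many $W_R$ unless degrees exceed $1/2$. With density above $1/2$, Corollary~\ref{lmm:in-ex}/Proposition~\ref{prop:intersection}-style intersection arguments let us cover $W$ by boundedly many classes, each with a common neighbourhood of linear size. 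This is where $1/2+\delta$ is used, and I expect it to be the main step.

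\textbf{Part (i).} Apply Lemma~\ref{lmm:0-1/2} twice. First restrict $W$ to a high-chromatic subset $W_1$ on which the common-neighbourhood structure in $V_2$ is controlled, for instance via profile classes as in part (ii). Then build the $A$-part inside $W_1$ as in the proof of Lemma~\ref{lmm:0-1/2}, that is, a copy of $\mathcal F^t_k$ found in a high-chromatic set. Next choose $U_1\subseteq V_1$ and $U_2\subseteq V_2$ independently, each as the vertices adjacent to the prescribed colour classes $A^{\bm x}$, using $\delta$-density and the pigeonhole over the $2^c$ patterns. The hard point is ensuring that one $A$ works for both sides simultaneously. I would take $A$ inside a class of vertices whose neighbourhood traces on $V_1$ and $V_2$ are jointly typical, obtained by Corollary~\ref{coro: large common} applied to the two sides at once, and then run the Zykov embedding from Lemma~\ref{lmm:0-1/2} on each side separately.
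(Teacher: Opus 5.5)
Your part (iii) is correct and is the same as the paper's argument: any two vertices of $W$ share at least $2\delta m$ neighbours in $V_2$, so \cref{lmm:0-1/2} applies directly with $(X,Y,Z)=(W,V_1,V_2)$.

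Parts (i) and (ii) have a genuine gap, and it is the same gap in both. You never produce a single object in $W$ (a forest in (ii), a common $A$-part in (i)) that serves $V_1$ and $V_2$ \emph{simultaneously}.

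In (ii) you are right to reject the dependent-random-choice averaging, but the proposed repair is false. Density $1/2+\delta$ does not let you cover $W$ by boundedly many classes, each with a common neighbourhood of linear size. To see this, take $|W|=m$, put a high-girth, high-chromatic graph on $W$, and give every $w\in W$ a uniformly random neighbourhood of size $0.6m$ in $V_1$. A fixed set of size $\beta m$ lies in a given neighbourhood with probability about $0.6^{\beta m}$. So with high probability no $m/N$ vertices of $W$ have $\beta m$ common neighbours, and no $N$ such classes cover $W$.

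Density above $1/2$ only guarantees \emph{pair}-density in the worst case; three such vertices may have no common neighbour at all. \cref{lmm:in-ex} and \cref{prop:intersection} merely locate some $t$ vertices with a large common neighbourhood. They give no control over chromatic number or over which forest those vertices span.

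In (i), running the Zykov embedding of \cref{lmm:0-1/2} on each side separately yields two unrelated $A$-parts. Corollary~\ref{coro: large common} outputs only $t$ vertices with large common neighbourhoods, not the high-chromatic, jointly controlled class you would need.

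The missing idea is to merge $V_1$ and $V_2$ into one target set, so that \cref{lmm:0-1/2} is applied only once.

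\begin{itemize}
\item \emph{Auxiliary graph.} Write $V_1=\{x_1,\dots,x_m\}$ and $V_2=\{y_1,\dots,y_m\}$, and pick a bijection $\sigma$ of $[m]$. Build $\Gamma_\sigma$ on $W\cup Z$ with $Z=\{z_1,\dots,z_m\}$, keeping $G[W]$ and joining $w$ to $z_i$ if and only if $wx_i,wy_{\sigma(i)}\in E(G)$.
\item \emph{Concentration.} For uniformly random $\sigma$, $|N_{\Gamma_\sigma}(w)\cap Z|$ dominates a $\mathrm{Hyp}(m,\delta m,\delta m)$ variable with mean $\delta^2 m$. In (ii), the number of common neighbours in $Z$ of a pair dominates a $\mathrm{Hyp}(m,2\delta m,2\delta m)$ variable with mean $4\delta^2 m$.
\item \emph{Union bound.} \cref{lmm:tail} plus a union bound over the at most $m$ vertices (resp.\ $m^2$ pairs) of $W$ gives a $\tau$ for which $(W,Z)$ is $\delta^2/2$-dense (resp.\ $2\delta^2$-pair-dense) in $\Gamma_\tau$. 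This is where the hypothesis $|W|\le m$ enters; it is not a freedom you get to choose.
\item \emph{One application of the lemma.} \cref{lmm:0-1/2} in $\Gamma_\tau$ gives $Z^t_k(A,U)$ with $U\subseteq Z$ in (i), or a copy $T$ of $\mathcal F^t_k$ in $G[W]$ with at least $t$ common neighbours in $Z$ in (ii).
\item \emph{Projection.} Since $\sigma$ is a bijection, distinct $z_i$ correspond to distinct $x_i$ and distinct $y_{\sigma(i)}$. So the same $A$ (resp.\ $T$) works on both sides at once. A product $V_1\times V_2$ would not suffice, since its $t$ common neighbours could all share a single coordinate.
\end{itemize}
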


\begin{proof}[Proof of \cref{lmm:embed key}]
   Fix integers $q\gg t_0\gg |H|$, depending only on $H$. We first demonstrate $\rm{(i)}$. By \cref{lmm:re-par}, there exists a real number $\alpha:=\alpha(\varepsilon,d,t_0)< \varepsilon$ and an $(\alpha, \varepsilon)$-partition $V_1=S_0 \cup S_1 \cup S_2 \cup \cdots \cup S_p$ such that for each $S_i$, there is an $(\alpha, \varepsilon)$-partition $V_4 = T_{0}^i \cup T_{1}^i \cup T_{2}^i \cup \cdots \cup T_{p}^i$ satisfying that $(S_i, T_{j}^i)$ is $t_0$-good with respect to $(V_2,V_3)$.

Let $V_1'=V_1\setminus S_0$ and $V_4^i=V_4\setminus T_{0}^i$.
Then for any $x\in X$, $|N(x)\cap V_1'|\ge (1/2+\delta-\varepsilon)|V_1'|$ and $|N(x)\cap V_4^i|\ge (1/2+\delta-\varepsilon)|V_4^i|$.   
For each $i,j\in[p]$, let 
$$X_{i,j}=\left\{x\in X\setminus(S_i\cup T^i_j): d_{S_i}(x)\ge (1/2+\delta -\varepsilon) |S_i|, \text{ and }  d_{T^{i}_j}(x)\ge (1/2+\delta -\varepsilon) |T^i_j|\right\}.$$
Then by averaging and $\alpha <\varepsilon \ll \delta$, we have $\bigcup_{i,j} X_{i,j} = X$, which implies that 
$$\chi(G[X]) \le \sum_{i,j}\chi (G[X_{i,j}]).$$ 
As $\chi(G[X])\ge C$ is sufficiently large, averaging again we see that there exist $i,j$ with  $\chi (G[X_{i,j}]) \ge C/p^2 >C'M\frac{1}{\alpha}$, where $C'$ is the constant obtained from \cref{lmm:xiaobaima} with $(k,t,\delta) = (|H|,q, \delta-\varepsilon).$
Since $|X|< Mn$, there exists $X'_{i,j} \subseteq X_{i,j}$ with $|X'_{i,j}| \le \alpha n$ and $\chi (G[X'_{i,j}]) \ge C'$.
Then by the definition of $X'_{i,j}$, we can apply   \cref{lmm:xiaobaima}$\rm{(ii)}$ with $(W,V_1,V_2)=(X'_{i,j},S_i,T_j^i)$ to get a copy of $\C F_{|H|}^{q}$, say $T$, in $G[X'_{i,j}]$ such that $|N(T)\cap S_i|,~|N(T)\cap T_j^i|\ge q$. Recall that $(S_i,T_{j}^i)$ is $t_0$-good with respect to $(V_2,V_3)$ and $H\hookrightarrow\C Z_0$. Let $W_2\subseteq V_2$ and $W_3\subseteq V_3$ be the sets witnessing this $t_0$-goodness.

Since $q\gg t_0\gg |H|$, after discarding the vertices of $T$ that lie in $W_2\cup W_3$, the copy of $\C F_{|H|}^{q}$ still contains enough unused copies of every tree on at most $|H|$ vertices. Similarly, the sets $N(T)\cap S_i$, $N(T)\cap T_j^i$, $W_2$ and $W_3$ contain enough unused vertices for the embedding. Thus, using these unused vertices, we can embed $H$ into $G$ as desired (see~\cref{fig:41}).

\begin{figure}[!ht]
    \centering
    \resizebox{0.35\textwidth}{!}{%
\begin{tikzpicture}
\coordinate (r1'') at (-1,0);
\coordinate (r2'') at (2,0);
\coordinate (r3'') at (2,0);
\coordinate (r4'') at (2.93,-2.85);
\coordinate (r5'') at (2.93,-2.85);
\coordinate (r6'') at (0.5,-4.9);
\coordinate (r7'') at (0.5,-4.9);
\coordinate (r8'') at (1.07-3,-2.85);
\coordinate (r9'') at (1.07-3,-2.85);
\coordinate (r10'') at (-1,0);
\draw [red,line width=2pt](r1'') -- (r2'');
\draw [red,line width=2pt](r3'') -- (r4'');
\draw [red,line width=2pt](r5'') -- (r6'');
\draw [red,line width=2pt](r7'') -- (r8'');
\draw [red,line width=2pt](r9'') -- (r10'');

\draw (-2,-5.5) rectangle (3,-4.3);

\draw (-1,0) circle (1cm);
\filldraw[fill=white] (-1,0) circle (0.5cm);

\draw (2,0) circle (1cm);
\filldraw[fill=white] (2,0) circle (0.5cm);

\draw (1.07-3,-2.85) circle (1cm);
\filldraw[fill=white] (1.07-3,-2.85) circle (0.5cm);

\draw (5.93-3,-2.85) circle (1cm);
\filldraw[fill=white] (2.93,-2.85) circle (0.5cm);

\filldraw[fill=white] (0.5,-4.9) circle (0.5cm);

\node[below] at (0.5,-4.55) {\textcolor{red}{$\mathcal{F}_{|H|}^{q}$}};
\node[below] at (8.1-10,-2.55) {\textcolor{red}{$S_i$}};
\node[below] at (9-10,0.25) {\textcolor{red}{$W_2$}};
\node[below] at (12-10,0.25) {\textcolor{red}{$W_3$}};
\node[below] at (2.95,-2.5) {\textcolor{red}{$T_j^i$}};
\node at (-1.5,-5) {$X_{i,j}$};
\node at (-3.2,-3.3) {$V_1$};
\node at (4.25,-3.3) {$V_4$};
\node at (-2.2,0.55) {$V_2$};
\node at (3.2,0.55) {$V_3$};

\end{tikzpicture}
}
\caption{Embed $H$ into $G$}\label{fig:41}
\end{figure}

The proofs of (ii) and (iii) follow analogously, with the same choice of the auxiliary constants $q\gg t_0\gg |H|$, invoking \cref{lmm:xiaobaima}(iii) and (i), respectively.
\end{proof}

\subsection{Proof of Lemma~\ref{lmm:re-par}}
  To prove \cref{lmm:re-par}, it suffices to show that there exists $\alpha:=\alpha(\varepsilon,d,t)$ with the following property: 
\begin{itemize}
\item 
   for any $X\subseteq V_1$ with $|X|\ge \varepsilon |V_1|$, there exist $S\subseteq X$ of size $|S|= \alpha |V_1|$ and an $(\alpha,\varepsilon)$-partition  $V_4 = T_{0} \cup T_{1} \cup T_{2} \cup \cdots \cup T_{p}$, such that for every $i\in [p]$, $(S, T_{i})$ is $t$-good with respect to $(V_2,V_3)$.
  \end{itemize}    
Indeed, given the above property, we can successively select sets $S_1,\cdots,S_p$ that meet the requirements of \cref{lmm:re-par} until $|V_1\setminus(S_1\cup \cdots\cup S_p)|\le  \varepsilon |V_1|$. 
   Let $S_0=V_1\setminus(S_1\cup \cdots\cup S_p)$, then $V_1=S_0 \cup S_1 \cup S_2 \cup \cdots \cup S_p$ is the desired $(\alpha, \varepsilon)$-partition.
\begin{figure}[H]
    \centering
\begin{tikzpicture}[scale=0.8]
  % 画第一个椭圆
  \draw[black,line width=0.901pt] (0,0) ellipse (1 and 2);
  \draw[red,line width=0.901pt] (0,0) ellipse (0.5 and 1);
  \draw[red,line width=0.901pt] (0,1) -- (3,0.4);
  \draw[red,line width=0.901pt] (0,-1) -- (3,-0.4);
  \node at (0,0) {$S$};
  \node at (0,-1.5) {$X$};
  % 画第二个椭圆
  \draw[black,line width=0.901pt] (3,0) ellipse (1 and 2);
  \draw[red,line width=0.901pt] (3,0) ellipse (0.4 and 0.4);
  \draw[red,line width=0.901pt] (3,0.4) -- (6,0.8);
  \draw[red,line width=0.901pt] (3,-0.4) -- (6,-0.8);
  \node at (3,0) {$W_2$};
  \node at (3,-1.5) {$V_2$};
  % 画第三个椭圆
  \draw[black,line width=0.901pt] (6,0) ellipse (1 and 2);
  \draw[dashed, red,line width=0.901pt] (6,0) ellipse (0.6 and 0.8);
  \draw[red,line width=0.901pt] (6,0) ellipse (0.4 and 0.4);
  \draw[red,line width=0.901pt] (6,0.4) -- (9,1);
  \draw[red,line width=0.901pt] (6,-0.4) -- (9,-1);
  \node at (6,0) {$W_3$};
  \node at (6,-1.5) {$V_3$};
  % 画第四个椭圆
  \draw[black,line width=0.901pt] (9,0) ellipse (1 and 2);
  \draw[red,line width=0.901pt] (9,0) ellipse (0.5 and 1);  
  \node at (9,0) {$T_i$};
  \node at (9,-1.5) {$V_4$};
\end{tikzpicture}
\caption{An illustration of the proof of \cref{lmm:re-par}}
\end{figure}
\begin{claim}\label{cl:select-ST}
   For any $X\subseteq V_1$ with $|X|\ge \varepsilon |V_1|$, there exists $W_2\subseteq V_2$ with $|W_2|=t$ such that $|N(W_2)\cap X|\ge \beta |X|$ and $|N(W_2)\cap V_3|\ge \beta |V_3|$ for some $\beta=\beta(d,t)\gg \varepsilon$.
\end{claim}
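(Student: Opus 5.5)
We will use $\varepsilon$-regularity of the pairs $(V_1,V_2)$ and $(V_2,V_3)$ to find many vertices of $V_2$ that are simultaneously typical toward $X$ and toward $V_3$. We then apply \cref{coro: large common} to pick $t$ of them whose common neighbourhoods in both $X$ and $V_3$ are large.

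\textbf{Step 1.} Since $|X|\ge\varepsilon|V_1|$ and $(V_1,V_2)$ is $(\varepsilon,d)$-regular, the standard consequence of regularity applies: all but fewer than $\varepsilon|V_2|$ vertices $v\in V_2$ satisfy $|N(v)\cap X|\ge (d-\varepsilon)|X|$. Otherwise the set $B\subseteq V_2$ of at least $\varepsilon|V_2|$ vertices with small degree into $X$ would give $d(X,B)<d-\varepsilon$, contradicting regularity.

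\textbf{Step 2.} In the same way, applied to $(V_2,V_3)$ with the full set $V_3$, all but fewer than $\varepsilon|V_2|$ vertices of $V_2$ have at least $(d-\varepsilon)|V_3|$ neighbours in $V_3$.

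\textbf{Step 3.} Let $Y\subseteq V_2$ be the set of vertices typical in both senses. Then $|Y|\ge(1-2\varepsilon)n$, which is certainly larger than any constant once $n$ is large. The pairs $(Y,X)$ and $(Y,V_3)$ are both $(d-\varepsilon)$-dense, and $X,V_3,Y$ are disjoint.

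\textbf{Step 4.} Apply \cref{coro: large common} with the three sets $(Y,X,V_3)$, density parameter $d/2\le d-\varepsilon$, and the given $t$. This yields $W_2\subseteq Y$ with $|W_2|=t$, $|N(W_2)\cap X|\ge\beta|X|$ and $|N(W_2)\cap V_3|\ge\beta|V_3|$, where $\beta=\beta(d/2,t)$.

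The corollary is stated with the density normalised by $|V_1|,|V_2|$ for arbitrary sets, so it applies to $X$ as well. We also need $|X|$, $|V_3|$ to be large, which holds because $|X|\ge\varepsilon n$ and $n$ is large. The constant $\beta$ depends only on $d$ and $t$.

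The only point needing care is the hierarchy $\beta\gg\varepsilon$. Tracing \cref{lmm:in-ex}, $\beta$ is obtained by iterating \cref{prop:intersection} roughly $\log t$ times starting from density $d/2$, so it is a positive function of $d$ and $t$ alone. The assumption $d,1/t\gg\varepsilon$ therefore lets us take $\varepsilon$ small relative to it. This justifies the claim, and it is the step I would double-check. The later steps of \cref{lmm:re-par} will then use $\beta\gg\varepsilon$ to apply regularity again to $N(W_2)\cap V_3$ versus $V_4$.
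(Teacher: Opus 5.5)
Your proof is correct and takes essentially the same route as the paper: regularity rules out at least $\varepsilon|V_2|$ vertices of $V_2$ with low degree into $X$ or into $V_3$, and then \cref{coro: large common} with density $d/2$ is applied to the remaining set. Your threshold $(d-\varepsilon)$, in place of the paper's $(d-2\varepsilon)$, works equally well.
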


\begin{poc}
   Let $Z:=\{x\in V_2: d_X(x) < (d-2\varepsilon)|X|\}$, $Z':=\{x\in V_2:  d_{V_3}(x) < (d-2\varepsilon)|V_3|\}$ and $$V_2'=V_2\setminus(Z\cup Z')=\{x\in V_2: d_X(x) \ge (d-2\varepsilon)|X| \text{ and }   d_{V_3}(x) \ge (d-2\varepsilon)|V_3|\}.$$ As $|X| \ge \varepsilon |V_1|$, if $|Z|\ge \varepsilon |V_2|$, then due to the $(\varepsilon,d)$-regularity of $(V_1,V_2)$, $e(X,Z)\ge (d-\varepsilon)|X||Z|$, contradicting the choice of $Z$. Thus, $|Z|< \varepsilon |V_2|$. Similarly, we have $|Z'|< \varepsilon |V_2|$ and therefore $|V_2'|=|V_2|-|Z\cup Z'|\ge (1-2\varepsilon)|V_2|$.
   Furthermore, we have $(V'_2,X)$ and $(V'_2,V_3)$ are $\frac{d}{2}$-dense, as $d\gg \varepsilon$. The desired $W_2\subseteq V_2$ can be obtained by invoking \cref{coro: large common} with $(\varepsilon,t)=(d/2,t)$.
\end{poc}

\begin{claim}\label{cl:select-T}
    For any $W\subseteq V_3$ with $|W|\ge 2\varepsilon |V_3|$ and $Y\subseteq V_4$ with $|Y|\ge \varepsilon |V_4|$, there exists $W_3\subseteq W$ with $|W_3|=t$ such that $|N(W_3)\cap Y|\ge \gamma|Y|$
    for some $\gamma=\gamma(d,t) \gg \varepsilon$.
\end{claim}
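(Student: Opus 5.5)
We mirror the proof of \cref{cl:select-ST}, applied to the pair $(V_3,V_4)$ restricted to $W$ and $Y$.

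First we clean up $W$. Let
\[
Z:=\{x\in W: d_Y(x)<(d-\varepsilon)|Y|\}.
\]
Suppose $|Z|\ge\varepsilon|V_3|$. Since also $|Y|\ge\varepsilon|V_4|$, the $(\varepsilon,d)$-regularity of $(V_3,V_4)$ gives
\[
e(Z,Y)\ge (d-\varepsilon)|Z||Y|,
\]
which contradicts the definition of $Z$. Hence $|Z|<\varepsilon|V_3|\le |W|/2$, using $|W|\ge 2\varepsilon|V_3|$; this is exactly why the hypothesis is $2\varepsilon$ rather than $\varepsilon$. Set $W':=W\setminus Z$. Then $|W'|\ge |W|/2\ge \varepsilon|V_3|$, and since $d\gg\varepsilon$, the pair $(W',Y)$ is $\tfrac d2$-dense.

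Next we find $t$ vertices of $W'$ with a large common neighbourhood in $Y$. The sets $N(x)\cap Y$ for $x\in W'$ all have size at least $\tfrac d2|Y|$. Since $|W'|\ge\varepsilon n$ is large compared with $m(d/2,t)$, we may apply \cref{lmm:in-ex} with parameters $(d/2,t)$ to any $m(d/2,t)$ of these sets, viewed as subsets of $Y$. This yields $t$ vertices $x_1,\dots,x_t\in W'$ with
\[
\Bigl|\bigcap_{j=1}^t N(x_j)\cap Y\Bigr|\ge \alpha(d/2,t)\,|Y|.
\]
Alternatively, one can invoke \cref{coro: large common} with $V_1=V_2=Y$ formally, or simply the one-sided version of its argument. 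Set $W_3=\{x_1,\dots,x_t\}$ and $\gamma:=\alpha(d/2,t)$. This constant depends only on $d$ and $t$, so $\gamma\gg\varepsilon$ by the hierarchy $d,1/t\gg\varepsilon$.

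There is no real obstacle here; the only points needing care are the factor $2$ in the hypothesis $|W|\ge 2\varepsilon|V_3|$, which keeps $|W'|$ above the regularity threshold, and the check that $\gamma$ does not depend on $\varepsilon$. The latter holds because it comes solely from the intersection \cref{lmm:in-ex} applied with density $d/2$.
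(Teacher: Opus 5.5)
Your proof is correct and follows the paper's argument: regularity of $(V_3,V_4)$ removes fewer than $\varepsilon|V_3|$ low-degree vertices of $W$, and the factor $2$ keeps the rest $\tfrac d2$-dense to $Y$ and of size at least $\varepsilon|V_3|$. \cref{lmm:in-ex} with parameters $(d/2,t)$ then gives $W_3$; using the threshold $d-\varepsilon$ instead of the paper's $d-2\varepsilon$ makes no difference.
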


\begin{poc}
   Define $W': = \{w\in W: d_Y(w) \ge (d-2\varepsilon)|Y|\}$ and $W'':=W\setminus W'.$ Again by the choice of $W''$ and $(V_3,V_4)$ being a regular pair, $|W''| < \varepsilon |V_3|$.
  Thus we have $|W'| \ge \varepsilon |V_3|$ and $(W',Y)$ is $\frac{d}{2}$-dense.
  Then the desired $W_3\subseteq W$ can be obtained by \cref{lmm:in-ex} with $(\varepsilon,t)=(d/2,t)$.
\end{poc}
Take $\alpha \ll \varepsilon,\gamma,\beta$.
Let $X\subseteq V_1$ with $|X|\ge \varepsilon |V_1|$. 
Using \cref{cl:select-ST}, we can find a subset $S\subseteq X$ with $|S|=\alpha|V_1| \le \beta |X|$ and $W_2\subseteq V_2$ with $|W_2|=t$, such that $G(W_2,S)$ is a complete bipartite graph and $|N(W_2)\cap V_3|\ge \beta|V_3|$. Then by \cref{cl:select-T} with~$W=N(W_2)\cap V_3$, for any $Y \subseteq V_4$ with $|Y| \ge \varepsilon|V_4|$, there exists $W_3 \subseteq N(W_2)\cap V_3$ with $|W_3|=t$  and $T\subseteq Y$ with $|T| =\alpha |V_4| \le \gamma |Y|$ such that $G[W_3,T]$ is a complete bipartite graph, which implies that $(S,T)$ is  $t$-good with respect to $(V_2,V_3)$.
So we can successively select sets $T_1,T_2, \dots T_p$ such that $(S,T_i)$ is $t$-good with respect to $(V_2,V_3)$ until $|V_4 \setminus \{T_1\cup \dots\cup T_p \}| \le \varepsilon|V_4|$. This finishes the proof of \cref{lmm:re-par}.

\subsection{Proof of Lemma~\ref{lmm:xiaobaima}}
A random variable $X$ is said to follow the hypergeometric distribution, denoted as $X\sim \mathrm{Hyp}(N,K,n)$, if its probability mass function is given by
$$\mathbb{P}(X=k) = \frac{\binom{K}{k}\binom{N-K}{n-k}}{\binom{N}{n}},$$
where  $N$ is the population size, $K$ is the number of success states in the population, $n$ is the number of draws (i.e. quantity drawn in each trial), and $k$ is the number of observed successes.

\begin{lemma}[Tail bounds, \cite{409cf137-dbb5-3eb1-8cfe-0743c3dc925f}]\label{lmm:tail}
    Let $X \sim \mathrm{Hyp}(N,K,n)$ and $p= K/N$. Then for $0<t<p$, 
    $$\mathbb{P}(X\le (p-t)n) \le e^{-2t^2n}.$$
\end{lemma}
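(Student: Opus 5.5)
The plan is to reduce the hypergeometric lower tail to the binomial one using Hoeffding's comparison between sampling without and with replacement, and then run the standard Chernoff--Hoeffding exponential moment argument. Write $X=\sum_{i=1}^n \xi_i$, where $\xi_1,\dots,\xi_n$ are the indicators that the $i$-th draw (without replacement) from a population of $N$ items, $K$ of them successes, is a success. Let $Y=\sum_{i=1}^n \eta_i$ with $\eta_i$ i.i.d.\ Bernoulli$(p)$, $p=K/N$, so $Y\sim\mathrm{Bin}(n,p)$.

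\textbf{Step 1 (convex domination).} I would show that $\mathbb{E} f(X)\le \mathbb{E} f(Y)$ for every convex $f$; it suffices to have this for $f(x)=e^{-\lambda x}$ with $\lambda>0$. Following Hoeffding, write $\mathbb{E} f(Y)$ as an average over ordered $n$-tuples of population items drawn with replacement, and $\mathbb{E} f(X)$ as the same average over tuples of distinct items. One then checks that the law of $Y$ is obtained from that of $X$ by a mean-preserving spread. Concretely, conditioning the with-replacement sample on the set of distinct items it contains and on the multiplicities shows that $\mathbb{E}[Y\mid\text{pattern}]$ is an average of without-replacement sums, so Jensen gives the inequality. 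Alternatively, for the exponential function alone one can argue directly: $\mathbb{E}\prod e^{-\lambda\xi_i}$ is a symmetric function of the population values, and expanding shows it is at most $\bigl(\mathbb{E} e^{-\lambda\eta_1}\bigr)^n$, since the negative correlations among the $\xi_i$ only decrease mixed moments of $e^{-\lambda \xi_i}$. I expect this step to be the main obstacle, and I would first try the conditioning/Jensen route since it is cleanest.

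\textbf{Step 2 (Chernoff bound).} By Markov's inequality and Step 1, for $\lambda>0$,
\[
\mathbb{P}(X\le (p-t)n)\le e^{\lambda(p-t)n}\,\mathbb{E} e^{-\lambda X}\le e^{\lambda(p-t)n}\bigl(\mathbb{E} e^{-\lambda \eta_1}\bigr)^n .
\]

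\textbf{Step 3 (Hoeffding's lemma).} For a random variable in $[0,1]$ with mean $p$ we have $\mathbb{E} e^{-\lambda(\eta_1-p)}\le e^{\lambda^2/8}$. This follows from convexity of $e^{-\lambda x}$ on $[0,1]$ together with a second-order Taylor bound on $\log(1-p+pe^{-\lambda})+\lambda p$, whose second derivative is at most $1/4$. Hence the right-hand side in Step 2 is at most $\exp(-\lambda t n+\lambda^2 n/8)$. Choosing $\lambda=4t$ gives $e^{-2t^2 n}$, as claimed. The hypothesis $0<t<p$ only ensures that the event is nontrivial and that $\lambda>0$.
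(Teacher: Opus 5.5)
The paper gives no proof of this lemma; it quotes Hoeffding's inequality for sampling without replacement. Your outline (compare sampling without and with replacement, then an exponential-moment bound with Hoeffding's lemma and $\lambda=4t$) is Hoeffding's own argument. Steps 2 and 3 are correct. For a Bernoulli variable you do not even need the convexity step: $\mathbb{E}e^{-\lambda\eta_1}=1-p+pe^{-\lambda}$ exactly, and $L(\lambda)=\log(1-p+pe^{-\lambda})+\lambda p$ satisfies $L(0)=L'(0)=0$ and $L''\le 1/4$.

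The step that fails as written is the concrete mechanism in Step 1. Knowing that $\mathbb{E}[Y\mid\text{pattern}]$ is an average of without-replacement sums points Jensen the wrong way. To conclude $\mathbb{E}f(X)\le\mathbb{E}f(Y)$ you need $X$ itself to be a conditional expectation of $Y$, so that $f(X)=f(\mathbb{E}[Y\mid\mathcal G])\le\mathbb{E}[f(Y)\mid\mathcal G]$. Neither conditioning you mention produces $X$. Given only the coincidence pattern, $\mathbb{E}[Y\mid\text{pattern}]=np$ is a constant. Given the distinct items with their multiplicities, $Y$ is completely determined.

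The coupling that works goes the other way round. Let $c(\cdot)\in\{0,1\}$ be the success indicator on the population. Let $x_1,\dots,x_n$ be a uniformly random ordering of a uniformly random $n$-set $S$, so $X=\sum_i c(x_i)$. Independently, let $\iota:[n]\to[n]$ be the coincidence pattern of $n$ i.i.d. uniform draws, with blocks labelled in order of first appearance. Set $Y=\sum_{i=1}^n c(x_{\iota(i)})$.

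Then $Y\sim\mathrm{Bin}(n,p)$. An i.i.d. sample is its pattern together with its distinct items in order of first appearance. Given a pattern with $m$ blocks, those items form a uniform ordered sample without replacement, just like $x_1,\dots,x_m$. Moreover, given $(S,\iota)$ each $x_j$ is uniform on $S$, so $\mathbb{E}[Y\mid S]=\sum_{s\in S}c(s)=X$, and Jensen finishes.

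Your alternative route also works, but only if you expand in the failure indicators. Write $e^{-\lambda\xi_i}=e^{-\lambda}+(1-e^{-\lambda})(1-\xi_i)$; the product over $i$ then has nonnegative coefficients. Each mixed moment satisfies $\mathbb{E}\prod_{i\in T}(1-\xi_i)=\prod_{j=0}^{|T|-1}\frac{N-K-j}{N-j}\le(1-p)^{|T|}$, and resumming gives $(1-p+pe^{-\lambda})^n$. If you instead expand $e^{-\lambda\xi_i}=1-(1-e^{-\lambda})\xi_i$, the signs alternate, and the negative correlation of the $\xi_i$ cannot be used term by term.
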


\begin{proof}[Proof of \cref{lmm:xiaobaima}.]
  Assume that $V_1=\{x_1,x_2,\cdots, x_m\}$ and $V_2=\{y_1,y_2,\cdots,y_m\}$. Consider a uniform random permutation $\sigma$ of $[m]$.
  We construct an auxiliary graph $\Gamma_\sigma$ as follows.
\begin{itemize}
    \item The vertex set $V(\Gamma_\sigma)$ is $W\cup Z$, where $Z=\{z_1,\ldots,z_m\}$.
    \item The edge set $E(\Gamma_\sigma)$ is $E(G[W])\cup\{wz_i:~w\in W \text{ and } w x_i,wy_{\sigma(i)}\in E(G)\}$.
\end{itemize}

For part (i), by deleting edges if necessary, we may assume that every vertex in $W$ has degree exactly $\delta m$ to both $V_1$ and $V_2$. Fix $w\in W$, define a random variable $\textbf{X}_{\sigma}(w):=|N_{\Gamma_\sigma}(w)\cap Z|.$ 
  Without loss of generality, assume that $N_{G}(w)\cap V_1=\{x_1,\ldots,x_{\delta m}\}$ and $N_{G}(w)\cap V_2=\{y_1,\ldots,y_{\delta m}\}$. 
  When $\textbf{X}_{\sigma}(w)=k$, we see that $|\{\sigma(1),\ldots,\sigma(\delta m)\}\cap [\delta m]| = k$ and so 
  $$\mathbb{P}(\textbf{X}_{\sigma}(w)=k)=\frac{\binom{\delta m}{k}\binom{m-\delta m}{\delta m-k}}{\binom{m}{\delta m}}.$$
 That is, $\textbf{X}_{\sigma}(w)\sim \mathrm{Hyp}(m,\delta m,\delta m)$. Then by \cref{lmm:tail} (setting $N=m$, $K=n=\delta m$, $p=\delta$ and $t=\delta/2$), we get
\begin{align*}
   \mathbb{P}(\textbf{X}_{\sigma}(w)\le \delta^2 m/2 ) \le e^{-\delta^3 m/2}\le 1/m^2. 
\end{align*}
 By the union bound, we can fix a permutation $\tau$ such that $\textbf{X}_{\tau}(w)\ge \delta^2|Z|/2$ for every $w\in W$. 
 By \cref{lmm:0-1/2} with $(X,Y)=(W,Z)$, we conclude that $\Gamma_\tau$ contains a copy of $Z^t_k(A,U)$ with $(A,U) \subseteq (W,Z)$. By the definition of $\Gamma_{\tau}$, this implies that $G$ contains two copies of $Z^t_k$ that share the same part $A$, denoted as $Z^t_k(A,U_1)$ and $Z^t_k(A,U_2)$ with $(A,U_1,U_2)\subseteq(W,V_1,V_2)$ as desired.

For part (ii), note that from the density conditions, any two vertices in $W$ have at least $2\delta m$ common neighbors in both $V_1$ and $V_2$. Fix distinct $w_i,w_j\in W$ and define
\[
\textbf{X}_{\sigma}(w_i,w_j):=|N_{\Gamma_\sigma}(w_i)\cap N_{\Gamma_\sigma}(w_j)\cap Z|.
\]
Choose subsets
\[
A_{ij}\subseteq N_G(\{w_i,w_j\})\cap V_1,\qquad 
B_{ij}\subseteq N_G(\{w_i,w_j\})\cap V_2
\]
with $|A_{ij}|=|B_{ij}|=2\delta m$. The number of indices matched by $\sigma$ from $A_{ij}$ to $B_{ij}$ has distribution $\mathrm{Hyp}(m,2\delta m,2\delta m)$ and is a lower bound for $\textbf{X}_{\sigma}(w_i,w_j)$. Thus by \cref{lmm:tail}, we get
\begin{align*}
   \mathbb{P}(\textbf{X}_{\sigma}(w_i,w_j) \le 2\delta^2 m ) \le e^{-4\delta^3 m}\le 1/m^3. 
\end{align*}

By the union bound, there exists a permutation $\tau$ such that for every pair
$w_i,w_j\in W$,
\[
\textbf{X}_{\tau}(w_i,w_j)\ge 2\delta^2|Z|.
\]
Thus $(W,Z)$ is $2\delta^2$-pair-dense in $\Gamma_\tau$. By \cref{lmm:0-1/2} applied to $\Gamma_\tau$ with $(X,Y)=(W,Z)$, we obtain a copy of $\mathcal F_k^t$ in $G[W]$, say $T$, such that
\[
|N_{\Gamma_\tau}(T)\cap Z|\ge t.
\]
By the definition of $\Gamma_\tau$, this implies
\[
|N_G(T)\cap V_1|\ge t
\quad\text{and}\quad
|N_G(T)\cap V_2|\ge t.
\]

Part (iii) follows directly from \cref{lmm:0-1/2} with $(X,Y,Z)=(W,V_1,V_2)$ since $(W,V_2)$ is $\delta$-pair-dense.
\end{proof}

\section{Proof of Theorem \ref{thm:main}}\label{sec: prof}
Let $H_1$ and $H_2$ be two graphs with $\chi(H_1)=\chi(H_2) =3$. Fix real numbers $0 \ll \varepsilon_1 \ll \varepsilon_2 \ll d \ll \gamma$. 
Suppose $G$ is an $n$-vertex graph with an $(H_1,H_2)$-free 2-edge-coloring and $\delta(G)\ge  (f(H_1,H_2)+\gamma)n$, in which $G_r$ is $H_1$-free and $G_b$ is $H_2$-free
(Recall that $f(H_1,H_2)$ is the value assigned by the decision procedure summarized in \cref{Figure:main0}). We need to show that there exists a constant $C_0= C_0(H_1,H_2,\gamma)$ such that $\chi(G)\le C_0$.

\subsection{Setup}

Applying the Multicolor Regularity Lemma (\cref{thm:regularity}) to $G$, we get a partition of $V(G)$ into sets $V_0,V_1, \ldots, V_k$ with $|V_0|\le \varepsilon_1 n$, $|V_1|=\cdots=|V_k|$, for some $1 / \eps_1<k<M$ such that all except $\eps_1 k^{2}$ of the pairs $V_{i},V_{j}$, $1\leq i<j\leq k$, are $\eps_1$-regular with respect to both $G_r$ and $G_b$. Let $R_r$ and $R_b$ be $(\eps_1,d)$-reduced graphs of $G_r$ and $G_b$ respectively, and let $R=R_r\cup R_b$. We call an edge $e$ in $R$ red (or blue resp.) if $e\in E(R_r)$ (or if $e\in E(R_b)$ resp.). Note that an edge in $R$ may be both red and blue.

We first state some basic properties of $R$. As $\chi(H_1)=\chi(H_2)=3$, by Embedding Lemma (\cref{thm:embedding-lemma}), we have the following. 
\begin{claim}\label{cl:triangle-free}
$R_r$ and $R_b$ are  both triangle-free.
\end{claim}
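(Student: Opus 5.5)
We argue by contradiction, transferring a triangle in a reduced graph to a monochromatic copy of some $H_i$ via the Embedding Lemma (\cref{thm:embedding-lemma}). Suppose $R_r$ contains a triangle on clusters $i,j,\ell\in[k]$. By the definition of the $(\varepsilon_1,d)$-reduced graph, each of the pairs $(V_i,V_j)$, $(V_j,V_\ell)$ and $(V_i,V_\ell)$ is $(\varepsilon_1,d)$-regular in $G_r$.

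Since $\chi(H_1)=3$, we may fix a proper $3$-coloring of $H_1$. This coloring gives a homomorphism $H_1\to K_3$, so $H_1\subseteq K_3[s]$ with $s=|H_1|$. Moreover, $\Delta(H_1)\le |H_1|$.

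We apply \cref{thm:embedding-lemma} to the graph $G_r$. We restrict to the three clusters $V_i,V_j,V_\ell$, whose red reduced graph contains the triangle $K_3$. The parameters are $\Delta=|H_1|$ and $s=|H_1|$. The hypotheses hold for the following reasons:
\begin{itemize}
\item $\varepsilon_1\ll d$ guarantees $\varepsilon_1\le\varepsilon_0(d,\Delta)$. This is the one point requiring care in the choice of constants: $\varepsilon_1$ must be chosen small relative to $d$ and $|H_1|,|H_2|$, which the hierarchy $\varepsilon_1\ll\varepsilon_2\ll d$ permits, since $H_1,H_2$ are fixed.
\item The cluster size is $m\ge(1-\varepsilon_1)n/M$, and $n$ is large, so $md^{\Delta}\ge 2s$.
\end{itemize}
The lemma then yields $H_1\subseteq K_3[s]\subseteq G_r$, contradicting that $G_r$ is $H_1$-free.

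The identical argument in blue, using $\chi(H_2)=3$ and that $G_b$ is $H_2$-free, shows that $R_b$ is triangle-free.

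There is no real obstacle here beyond the constant bookkeeping noted above.
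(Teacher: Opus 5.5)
Your proof is correct and is exactly the paper's argument. A monochromatic triangle in $R_r$ (resp.\ $R_b$) is turned by the Embedding Lemma into a copy of $K_3[s]\supseteq H_1$ in $G_r$ (resp.\ $K_3[s]\supseteq H_2$ in $G_b$), using $\chi(H_i)=3$; the paper states this in one line, and your checks that $\varepsilon_1\le\varepsilon_0(d,\Delta)$ and $md^{\Delta}\ge 2s$ correctly fill in the details.
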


Let $L$ be the set of vertices $v$ in $V(R)=[k]$ with $d_{R}(v)\le (f(H_1,H_2) + \gamma /2)k$. The following claim is a standard fact in the minimum degree version of regularity lemma. We include a full proof of multicolor case here for completeness.

\begin{claim}\label{claim:L}
$|L| < \gamma k/4$.
\end{claim}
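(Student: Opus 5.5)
We argue by double counting the edges of $G$ against the degrees in the reduced graph $R$. The aim is to show that if at least $\gamma k/4$ clusters had small reduced degree, then the vertices inside those clusters would have too few neighbours in $G$. This would contradict $\delta(G)\ge (f(H_1,H_2)+\gamma)n$.

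\textbf{Step 1: edges lost to the regularity partition.} Write $m=|V_1|=\cdots=|V_k|$, so $km\le n$ and $km\ge (1-\varepsilon_1)n$. Classify the pairs of clusters $(V_i,V_j)$ with $i\ne j$ as bad if any of the following holds:
\begin{itemize}
\item the pair is irregular with respect to $G_r$ or $G_b$; there are at most $2\varepsilon_1 k^2$ such unordered pairs, since the partition of \cref{thm:regularity} works for both colours simultaneously;
\item the pair is regular in both colours, but $d_{G_r}(V_i,V_j)<d$ and $d_{G_b}(V_i,V_j)<d$.
\end{itemize}
Now fix $i$ and sum $\deg_G(v)$ over $v\in V_i$. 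The total is at least $m(f+\gamma)n$. We bound the contributions from above:
\begin{itemize}
\item edges inside $V_i$ contribute at most $m^2\le mn/k\le \varepsilon_1 mn$;
\item edges to $V_0$ contribute at most $\varepsilon_1 n m$;
\item edges to clusters $V_j$ with $ij$ an edge of $R$ contribute at most $d_R(i)\,m^2$;
\item edges to clusters $V_j$ forming a low-density regular pair contribute at most $2dm^2$ per cluster, so at most $2dkm^2\le 2dmn$ in total;
\item edges to clusters $V_j$ forming an irregular pair contribute at most $m^2$ each, i.e.\ at most $m^2\cdot I_i$, where $I_i$ is the number of irregular pairs containing $i$.
\end{itemize}

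\textbf{Step 2: the inequality for a cluster in $L$.} For $i\in L$ we have $d_R(i)\le (f+\gamma/2)k$. Dividing the bound from Step 1 by $m^2$ and using $n/m\ge k$ gives
\[
(f+\gamma)k \le (f+\gamma/2)k + I_i + (2d+2\varepsilon_1)k/(1-\varepsilon_1) .
\]
Since $d\ll\gamma$, this forces $I_i\ge \gamma k/4$ for every $i\in L$.

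\textbf{Step 3: conclusion.} Summing over $i\in L$ yields
\[
|L|\,\gamma k/4 \le \sum_i I_i \le 4\varepsilon_1 k^2 ,
\]
so $|L|\le 16\varepsilon_1 k/\gamma<\gamma k/4$, because $\varepsilon_1\ll\gamma$.

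The only delicate point is the bookkeeping of the constants. We must check that the losses from irregular pairs (of order $\varepsilon_1$) and from sparse pairs (of order $d$) are both absorbed by the $\gamma/2$ gap. We must also check that the pairs counted as edges of $R$ really correspond to a pair with red or blue density at least $d$. These hold because of the hierarchy $\varepsilon_1\ll d\ll\gamma$; if the counting turns out to be tight, shrinking $d$ relative to $\gamma$ suffices.
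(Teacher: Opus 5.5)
Your proof is correct and is essentially the paper's argument. The paper also double-counts $\sum_{i\in L}\sum_{v\in V_i} d_G(v)$, splitting the edges into those inside clusters, those to $V_0$, those along edges of $R$, and those along non-edges of $R$ (irregular pairs or pairs of density below $2d$), and concludes $|L|\le \varepsilon_1 k/(0.2\gamma)$. The only differences are cosmetic: you first show that each $i\in L$ lies in at least $\gamma k/4$ irregular pairs and then sum, whereas the paper aggregates over $L$ at once; and your bound of $2\varepsilon_1 k^2$ irregular pairs across the two colours is slightly more careful than the paper's $\varepsilon_1 k^2$.
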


\begin{poc}
On the one hand, by the minimum degree condition, we have 
\begin{align*}
    \sum_{i\in L}\sum_{v\in V_i} d_G(v)\ge |L|\frac{(1-\varepsilon_1)n}{k}(f(H_1,H_2)+\gamma)n.
\end{align*}

On the other hand, 
$$
\sum_{i\in L}\sum_{v\in V_i} d_G(v)
    \le 2\sum_{i\in L}e(V_i)+\sum_{i\in L}e(V_i,V_0)+\sum_{i\in L}\sum_{ij\in E(R)} e(V_i,V_j)+\sum_{i\in L}\sum_{ij\notin E(R)} e(V_i,V_j). $$
Let us bound these four terms separately.
\begin{itemize}
\item Since $|V_1|=\cdots=|V_k|\le n/k$, we have $2\sum_{i\in L}e(V_i)\le 2|L|\binom{n/k}{2}$.

\item Since $|V_0|\le \varepsilon_1n$, $\sum_{i\in L}e(V_i,V_0)\le |L|\varepsilon_1 n \frac{n}{k}$.

\item For any $i\in L$, there are at most $\big(f(H_1,H_2)+ \gamma /2\big)k$ choices of $j$ such that $ij\in E(R)$. Hence, $\sum_{i\in L}\sum_{ij\in E(R)} e(V_i,V_j)\le |L|\Big(f(H_1,H_2)+\frac{\gamma}{2}\Big)k  \left(\frac{n}{k}\right)^2.$

\item For any $ij\notin E(R)$, either $(V_i,V_j)$ is not $\varepsilon_1$-regular or $d(V_i,V_j)\le 2d$. Recall that there are at most $\varepsilon_1 k^2$ pairs $(V_i,V_j)$ in the first case, thus $\sum_{i\in L}\sum_{ij\notin E(R)} e(V_i,V_j)\le \varepsilon_1 k^2\left(\frac{n}{k}\right)^2+|L|k 2d\left(\frac{n}{k}\right)^2.$
\end{itemize}
Putting them together, we have
\begin{align*}
   &|L|\frac{(1-\varepsilon_1)n^2}{k}(f(H_1,H_2)+\gamma)\le \sum_{i\in L}\sum_{v\in V_i} d_{G}(v)\\
    \le& 2|L|\binom{n/k}{2}+\varepsilon_1 n |L|\frac{n}{k}+|L|\Big(f(H_1,H_2)+\frac{\gamma}{2}\Big)k  \left(\frac{n}{k}\right)^2+2|L|k d\left(\frac{n}{k}\right)^2+\varepsilon_1 k^2\left(\frac{n}{k}\right)^2\\
    \le& |L|\frac{n^2}{k}(f(H_1,H_2)+3\gamma/4)+\varepsilon_1n^2.
\end{align*}

This simplifies to $((1/4-\varepsilon_1)\gamma-\varepsilon_1f(H_1,H_2))|L|\le \varepsilon_1k$. Hence
$$|L|\le \frac{\varepsilon_1 k}{(1/4-\varepsilon_1)\gamma-\varepsilon_1f(H_1,H_2)}\le \frac{\varepsilon_1k}{0.2\gamma}<\frac{\gamma k}{4},$$
as desired.
\end{poc}

\begin{claim}\label{cl:independent-set}
$\alpha(R)<(1-f(H_1,H_2))k$.
\end{claim}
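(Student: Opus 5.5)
Let $I\subseteq [k]$ be an independent set of $R$; we will show that $|I|<(1-f(H_1,H_2))k$. The plan is to use \cref{claim:L}: a vertex of $I$ outside $L$ has large degree in $R$, and since $I$ is independent all those neighbours lie in $[k]\setminus I$.

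\textbf{Step 1: the independent set is not swallowed by $L$.} First suppose $I\subseteq L$. Then $|I|\le |L|<\gamma k/4$. Every value in \cref{thm:main} is at most $4/5$, so $1-f(H_1,H_2)\ge 1/5$. Since $\gamma$ is small, $\gamma/4<1/5$, and we are done in this case.

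\textbf{Step 2: counting neighbours of a high-degree vertex.} Otherwise pick $i\in I\setminus L$. Then
\[
d_R(i)>\bigl(f(H_1,H_2)+\gamma/2\bigr)k.
\]
All neighbours of $i$ in $R$ lie in $[k]\setminus I$. Hence
\[
k-|I|\ge d_R(i)>\bigl(f(H_1,H_2)+\gamma/2\bigr)k,
\]
which gives $|I|<(1-f(H_1,H_2)-\gamma/2)k<(1-f(H_1,H_2))k$.

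\textbf{Where the only subtlety lies.} The one thing to check is that edges of $R$ join distinct clusters. This holds because $R=R_r\cup R_b$ is defined on $[k]$ with edges only between $i\ne j$, so no loops occur and the count in Step 2 is correct. The whole argument uses nothing beyond \cref{claim:L}. The only point needing care is Step 1, where we need $\gamma$ small enough relative to the smallest gap $1-f\ge 1/5$; this is guaranteed by the choice of constants.
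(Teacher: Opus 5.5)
Your proof is correct and is essentially the paper's argument. The paper passes to $R'=R-L$, notes $\delta(R')\ge (f(H_1,H_2)+\gamma/4)k$ by Claim~\ref{claim:L}, and bounds $\alpha(R)\le \alpha(R')+|L|\le |R'|-\delta(R')+|L|$; this uses the same idea (a vertex of an independent set outside $L$ has all its many neighbours outside the set), just absorbing $L$ additively instead of through your case split. Your Step~1 needs no smallness assumption on $\gamma$, since $(f(H_1,H_2)+\gamma)n\le\delta(G)<n$ already forces $\gamma<1-f(H_1,H_2)$.
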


\begin{poc}
Let $R'$ be the subgraph of $R$ induced by $[k]\setminus L$. Then by \cref{claim:L}, \[\delta(R')\ge (f(H_1,H_2)+ \gamma /2)k-|L|\ge (f(H_1,H_2)+ \gamma /4)k.\] Hence, $\alpha(R)\le \alpha(R')+|L|\le |R'|-\delta(R')+|L|<(1-f(H_1,H_2))k$.
\end{poc}

\subsection{Partition via color density vectors}

For each $v\in V(G)$, we define the \textit{red neighbor density} and \textit{blue neighbor density} of $v$ in $V_i$ as follows: \[d_r(v,V_i)=\frac{|N_{G_r}(v)\cap V_i|}{|V_i|} \quad\text{and} \quad d_b(v,V_i)=\frac{|N_{G_b}(v)\cap V_i|}{|V_i|}.\]
We write $d(v,V_i)=d_r(v,V_i)+d_b(v,V_i)$.

Let $\mathcal{S}=\{0,\frac{1}{2},1\}^{2k}$. Define a map $\phi: V(G) \to \mathcal{S}$ with $v\mapsto(\phi_r^1(v),\ldots,\phi_r^k(v),\phi_b^{1}(v),\ldots,\phi_b^{k}(v))$ where
\begin{align*}
\phi_r^i(v) = 
\begin{cases}
0, & \text{if } d_r(v,V_i) < \varepsilon_2 \\
\frac{1}{2}, & \text{if } \varepsilon_2 \leq d_r(v,V_i) < \frac{1}{2}+\varepsilon_2 \\
1, & \text{if } d_r(v,V_i)\ge \frac{1}{2}+\varepsilon_2
\end{cases}
\quad \text{ and } \quad 
\phi_b^i(v) = 
\begin{cases}
0, & \text{if } d_b(v,V_i) < \varepsilon_2 \\
\frac{1}{2}, & \text{if } \varepsilon_2 \leq d_b(v,V_i) < \frac{1}{2}+\varepsilon_2 \\
1, & \text{if } d_b(v,V_i)\ge \frac{1}{2}+\varepsilon_2.
\end{cases}
\end{align*}
Partition $V(G)$ by considering the red/blue neighbor density of vertices in $V_i$'s as follows:
\begin{align*}
 V(G)=\bigcup_{\bm{v}\in\mathcal{S}}X_{\bm{v}} \quad \text{where} \quad X_{\bm{v}}=\{v\in V(G):~ \phi (v)=\bm{v}\}.
\end{align*}

Take $C$ sufficiently large, in particular larger than all constants required by Lemmas~\ref{lmm:embed key}, \ref{lmm:xiaobaima}, \ref{lmm:0-1/2} and the preceding regularity setup.
It suffices to show that for any $\bm{v}=(r_1,\ldots,r_k,b_1,\ldots,b_k)$, $\chi(G[X_{\bm{v}}])\le C$ as
\[\chi(G)\le  \sum_{\bm{v}\in\mathcal{S}}\chi(G[X_{\bm{v}}]).\]
To this end, fix an arbitrary $\bm{v}=(r_1,\ldots,r_k,b_1,\ldots,b_k)$ and suppose to the contrary that $\chi(G[X_{\bm{v}}]) > C$. 
Let $I^{+}=I^+(\bm{v})=I_r^{+}\cup I_{b}^{+}\subseteq [k]\setminus L$, where 
$$ I_r^{+}=I_r^{+}(\bm{v})=\{i\in [k] \setminus L:~r_i>0\} \quad \text{and} \quad I_b^{+}=I_b^{+}(\bm{v})=\{i\in [k] \setminus L:~b_i>0\}$$
are the sets of indices in $[k] \setminus L$ with positive red or blue density to $X_{\bm{v}}$ respectively. Note that some index might lie in both $I_r^{+}$ and $I_{b}^{+}$.

We next observe some basic properties of this  partition.
\begin{claim}\label{claim:degree}
For any $x\in X_{\bm{v}}$, we have $d(x)< \frac{n}{k}\sum_{i\in I^{+}}d(x,V_i)+\frac{\gamma}{2}n.$
\end{claim}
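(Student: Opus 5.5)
We split the degree of $x\in X_{\bm v}$ according to the regularity partition and bound each piece separately. Write
\[
d(x)=|N(x)\cap V_0|+\sum_{i\in L}|N(x)\cap V_i|+\sum_{i\notin L,\ i\notin I^+}|N(x)\cap V_i|+\sum_{i\in I^+}|N(x)\cap V_i|.
\]
The last sum is exactly $\sum_{i\in I^+}d(x,V_i)|V_i|\le \frac nk\sum_{i\in I^+}d(x,V_i)$, since $|V_i|\le n/k$. So it suffices to show that the first three terms total less than $\frac{\gamma}{2}n$.

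\textbf{The exceptional set $V_0$.} The first term is at most $|V_0|\le \varepsilon_1 n$.

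\textbf{The low-degree clusters $L$.} By \cref{claim:L}, the second term is at most $|L|\cdot\frac nk<\frac{\gamma}{4}n$.

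\textbf{Clusters outside $L$ but outside $I^+$.} For such an index $i$, the definition of $I_r^+$ and $I_b^+$ gives $r_i=b_i=0$. Since $x\in X_{\bm v}$, we have $\phi(x)=\bm v$, so $\phi_r^i(x)=\phi_b^i(x)=0$. This means $d_r(x,V_i)<\varepsilon_2$ and $d_b(x,V_i)<\varepsilon_2$. Hence $|N(x)\cap V_i|<2\varepsilon_2|V_i|\le 2\varepsilon_2 n/k$. Summing over at most $k$ such indices gives less than $2\varepsilon_2 n$.

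\textbf{Combining.} The three terms total less than $(\varepsilon_1+2\varepsilon_2+\gamma/4)n$. This is below $\frac{\gamma}{2}n$ because $\varepsilon_1\ll\varepsilon_2\ll\gamma$ (for instance $\varepsilon_1+2\varepsilon_2<\gamma/4$), which gives the claim.

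The only point needing care is that the index sets are disjoint and cover $[k]$: $I^+\subseteq[k]\setminus L$ by definition, and the "outside $I^+$" indices are exactly those of $[k]\setminus L$ not in $I^+$. No step is a real obstacle; it is a direct bookkeeping argument.
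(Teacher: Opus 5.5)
Your proposal is correct and matches the paper's proof: the same four-way split of $d(x)$ over $V_0$, the clusters in $L$, the clusters in $[k]\setminus(L\cup I^{+})$ (each contributing less than $2\varepsilon_2|V_i|$ because $r_i=b_i=0$), and the clusters in $I^{+}$. You also absorb the error terms into $\frac{\gamma}{2}n$ in the same way, using \cref{claim:L} and $\varepsilon_1\ll\varepsilon_2\ll\gamma$.
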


\begin{poc}
 Note that for any $i\in [k] \setminus (L\cup I^{+})$, $r_i=b_i=0$. Thus, $d_r(x,V_i)<\varepsilon_2$, $d_b(x,V_i)<\varepsilon_2$, which infers that $d(x,V_i)<2\varepsilon_2$. Hence
\begin{align*}
d(x)
&=\sum_{i=0}^k d(x,V_i)=d(x,V_0)+ \sum_{i\in L}d(x,V_i)+\sum_{i\in I^{+}}d(x,V_i)+\sum_{i\in [k] \setminus (L\cup I^{+})}d(x,V_i)\\
&< |V_0|+|L|\frac{n}{k}+\frac{n}{k}\sum_{i\in I^{+}}d(x,V_i)+2\varepsilon_2n\\
&\le \frac{n}{k}\sum_{i\in I^{+}}d(x,V_i)+\frac{\gamma}{2}n \quad(\text{By \cref{claim:L}}),
\end{align*}
as desired.
\end{poc}

\begin{claim}\label{claim:noin} There is no red edge in $R[I_r^{+}]$ and no  blue edge in $R[I_b^{+}]$.
\end{claim}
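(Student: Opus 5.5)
We argue by contradiction and treat the two colors symmetrically; we describe the red case. Suppose $i,j\in I_r^+$ with $ij$ a red edge of $R$, so $(V_i,V_j)$ is $(\varepsilon_1,d)$-regular in $G_r$. Since $i,j\in I_r^+$, we have $r_i,r_j>0$. By the definition of $\phi$, every $x\in X_{\bm v}$ therefore satisfies $d_r(x,V_i)\ge\varepsilon_2$ and $d_r(x,V_j)\ge\varepsilon_2$. In other words, in $G_r$ both $(X_{\bm v},V_i)$ and $(X_{\bm v},V_j)$ are $\varepsilon_2$-dense, while $\chi(G_r[X_{\bm v}])$ may be smaller than $\chi(G[X_{\bm v}])$. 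The aim is to embed $H_1$ into $G_r$ and reach a contradiction.

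The first step handles the colors inside $X_{\bm v}$. We have $\chi(G[X_{\bm v}])\le \chi(G_r[X_{\bm v}])\cdot\chi(G_b[X_{\bm v}])$, so one of the two color classes has chromatic number at least $\sqrt C$. If $\chi(G_r[X_{\bm v}])\ge\sqrt C$, we continue in red. Otherwise we must instead exploit the blue structure, so we would choose to prove the claim jointly with its blue counterpart. Cleaner alternative: if $\chi(G_r[X_{\bm v}])$ is small, split $X_{\bm v}$ into boundedly many red-independent sets; one of them, $Y$, still has $\chi(G_b[Y])$ large, and we run the blue argument on $Y$ whenever a blue edge lies in $R[I_b^+]$. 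If no such blue edge exists, we only need the red argument, and we first make sure the red graph on the relevant set has large chromatic number.

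Simplest route for the red argument, which avoids needing $G_r[X]$ to be high-chromatic: the regular pair $(V_i,V_j)$ together with the $\varepsilon_2$-densities already produces a red triangle-rich structure. Take a vertex $x\in X_{\bm v}$. By the Slicing lemma (\cref{fact:slicinglemma}), its red neighbourhoods $N_r(x)\cap V_i$ and $N_r(x)\cap V_j$, each of size at least $\varepsilon_2|V_i|\gg\varepsilon_1|V_i|$, span an $(\varepsilon_1/\varepsilon_2,d-\varepsilon_1)$-regular red pair. More usefully, apply \cref{coro: large common} to $(X_{\bm v},V_i)$ and $(X_{\bm v},V_j)$ in $G_r$, which is possible since $|X_{\bm v}|\ge C$ follows from the large chromatic number. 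This gives a set $W\subseteq X_{\bm v}$ with $|W|=t$ whose red common neighbourhoods in $V_i$ and $V_j$ have size at least $\beta|V_i|$ and $\beta|V_j|$. By the slicing lemma these two common neighbourhoods form a regular pair of positive density, so they contain a red $K_{t,t}$. Together with $W$ this yields a red complete tripartite graph $K_{t,t,t}$, which contains $H_1$ once $t\ge|H_1|$, since $\chi(H_1)=3$. This contradicts $G_r$ being $H_1$-free. The blue case is identical with $H_2$.

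With this route the chromatic-number bookkeeping above is unnecessary: only $|X_{\bm v}|\ge C$ is used, and it follows from $\chi(G[X_{\bm v}])>C$. The only point requiring care is the hierarchy $\varepsilon_1\ll\beta(\varepsilon_2,t)$ needed for the slicing lemma, which is the main (mild) obstacle; it is ensured by choosing $\varepsilon_1$ after $\varepsilon_2$ and $t=|H_1|+|H_2|$.
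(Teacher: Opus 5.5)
Your final route is correct and is essentially the paper's argument. The paper applies \cref{lmm:in-ex} twice, which is exactly what \cref{coro: large common} packages, to find $|H_1|$ vertices of $X_{\bm v}$ whose red common neighbourhoods have size at least $\varepsilon_1|V_i|$ and $\varepsilon_1|V_j|$; regularity of the red pair $(V_i,V_j)$ then gives a red $K_{|H_1|,|H_1|}$ between these neighbourhoods, hence a red $K_3[|H_1|]\supseteq H_1$. As you observe, only $|X_{\bm v}|>C$ is used, so your opening discussion of $\chi(G_r[X_{\bm v}])$ can be dropped.
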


\begin{poc}
Suppose to the contrary that $ij$ is a red edge in $R[I_r^{+}]$. We shall prove that there is a red copy of $K_3[|H_1|]$ in $(V_i,V_j,X_{\bm{v}})$, but then $H_1\subseteq G_r$ as $\chi(H_1)=3$, a contradiction. Recall that $|X_{\bm{v}}|\ge \chi(G[X_{\bm{v}}]) > C$ is sufficiently large and $i,j\in I_r^+$, hence apply \cref{lmm:in-ex} twice: first to the collection of sets $\{N_{G_r}(v)\cap V_i: v\in X_{\bm{v}}\}$, and then to the resulting sub-collection to get a subset $X'\subseteq X_{\bm{v}}$ of size $|H_1|$ such that $|N_{G_r}(X')\cap V_i|\ge \eps_1|V_i|$ and $|N_{G_r}(X')\cap V_j|\ge \eps_1|V_j|.$

Since $ij$ is a red edge and $(V_i,V_j)$ is $\eps_1$-regular, the red graph between $N_{G_r}(X')\cap V_i$ and $N_{G_r}(X')\cap V_j$ has density at least $d-\varepsilon_1$ and hence contains a red $K_{|H_1|,|H_1|}$, which together with $X'$ yields a desired red $K_3[|H_1|]$. The argument for blue color is identical.
\end{poc}

\begin{claim}\label{cor:intersection}
For any $u\in V(R)$, $N_{R_r}(u)\cap I_b^+$ and $N_{R_b}(u)\cap I_r^+$ are independent sets in $R$, hence of size at most $(1-f(H_1,H_2))k$. 
\end{claim}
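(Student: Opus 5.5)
To prove \cref{cor:intersection}, I would show directly that $N_{R_r}(u)\cap I_b^+$ contains no red edge and no blue edge of $R$; the blue half comes from \cref{claim:noin} and the red half from \cref{cl:triangle-free}. The size bound then follows from \cref{cl:independent-set}. By symmetry (swap the roles of red and blue, $H_1$ and $H_2$), the statement for $N_{R_b}(u)\cap I_r^+$ is identical, so I only treat the first set.

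Fix $u\in V(R)$ and two indices $i,j\in N_{R_r}(u)\cap I_b^+$. We need $ij\notin E(R)$, i.e.\ $ij$ is neither a red nor a blue edge of $R$.
\begin{enumerate}[(a)]
\item Since $i,j\in I_b^+$, \cref{claim:noin} says $R[I_b^+]$ contains no blue edge. So $ij$ is not blue.
\item If $ij$ were red, then $ui$, $uj$ and $ij$ would all be red edges of $R$. These form a red triangle in $R_r$, contradicting \cref{cl:triangle-free}. So $ij$ is not red.
\end{enumerate}
Since every edge of $R$ is red or blue (possibly both), $ij\notin E(R)$. Hence $N_{R_r}(u)\cap I_b^+$ is independent in $R$. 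Note that $u$ itself is not in $N_{R_r}(u)$, so there is no degenerate case.

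Finally, \cref{cl:independent-set} gives $\alpha(R)<(1-f(H_1,H_2))k$, which yields the claimed size bound. There is no real obstacle here: the only point needing care is that "independent in $R$" requires ruling out both colours, and the two colours are excluded by two different earlier claims, as in (a) and (b).
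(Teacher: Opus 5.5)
Your proposal is correct and matches the paper's proof: red edges inside $N_{R_r}(u)\cap I_b^+$ are ruled out by \cref{cl:triangle-free}, blue edges by \cref{claim:noin}, and the size bound comes from \cref{cl:independent-set}. The second set is handled by the same argument with the colours swapped.
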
 
\begin{poc}
Let us examine that $N_{R_r}(u)\cap I_b^+$ is an independent set and then the claimed upper bound on its cardinality follows from \cref{cl:independent-set}. According to \cref{cl:triangle-free}, $R_r$ is triangle-free, implying that there is no red edge within $N_{R_r}(u)$. By \cref{claim:noin}, there is no blue edge within $I_b^+$, hence $N_{R_r}(u)\cap I_b^+$ is an independent set. The argument for $N_{R_b}(u)\cap I_r^+$ is identical.
\end{poc}

\begin{claim}\label{claim:size}
$\max\{|I_r^{+}|,|I_b^{+}|\} \le 2(1-f(H_1,H_2))k.$
\end{claim}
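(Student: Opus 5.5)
Fix a vertex $u\in I_r^+$; by symmetry it suffices to show $|I_r^{+}|\le 2(1-f(H_1,H_2))k$, and the blue case is identical. Since $I_r^+\subseteq[k]\setminus L$, the vertex $u$ has $d_R(u)>(f(H_1,H_2)+\gamma/2)k$. We then split its $R$-neighbourhood into red and blue neighbours: every $R$-neighbour of $u$ is either a red neighbour or a blue neighbour, and possibly both.

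By \cref{claim:noin} there is no red edge inside $R[I_r^+]$. Hence $u$ has no red neighbour in $I_r^+$, and so
\[
N_R(u)\cap I_r^+\subseteq N_{R_b}(u)\cap I_r^+ .
\]
By \cref{cor:intersection}, the set $N_{R_b}(u)\cap I_r^+$ is independent in $R$. Combining the two facts, at most $(1-f(H_1,H_2))k$ vertices of $I_r^+$ are adjacent to $u$ in $R$.

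The remaining vertices of $I_r^+$ are non-neighbours of $u$ in $R$ (together with $u$ itself). Their number is at most
\[
k-d_R(u)<k-(f(H_1,H_2)+\gamma/2)k\le (1-f(H_1,H_2))k .
\]
Adding the two bounds gives $|I_r^+|\le 2(1-f(H_1,H_2))k$, as claimed. If $I_r^+$ is empty the bound is trivial.

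A slightly cleaner alternative for the second part uses the independent set $I_r^+\setminus N_R(u)$. This set contains no blue edge only when restricted to $I_b^+$, so I would prefer the degree count above: it relies only on the definition of $L$ and needs no further structural input.

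The only point needing care, and the one I expect to be the main (mild) obstacle, is that red and blue edges of $R$ may coincide. If $uv$ is both red and blue with $v\in I_r^+$, then it is in particular red, which \cref{claim:noin} forbids. So every $R$-neighbour of $u$ inside $I_r^+$ really is blue-only, and the containment above is valid.

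The same argument, with colours swapped and $u\in I_b^+$, bounds $|I_b^+|$.
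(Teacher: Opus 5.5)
Your proof is correct, but it takes a slightly different route from the paper's.

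\textbf{The paper's argument.} It argues by contradiction. If $|I_r^+|>2(1-f(H_1,H_2))k$, then $R[I_r^+]=R_b[I_r^+]$ is triangle-free. Mantel's theorem then gives a vertex $v\in I_r^+$ with at most $|I_r^+|/2$ neighbours inside $I_r^+$. So $v$ has more than $(1-f(H_1,H_2))k-1$ non-neighbours in $I_r^+$, which forces $d_R(v)\le f(H_1,H_2)k$ and contradicts $v\notin L$.

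\textbf{Your argument.} You take an arbitrary $u\in I_r^+$. You bound its neighbourhood inside $I_r^+$ by the independence number of $R$, via \cref{cor:intersection} and \cref{cl:independent-set}, rather than by $|I_r^+|/2$. The non-neighbour count via $k-d_R(u)$ is the same in both proofs.

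\textbf{Comparison.} Your route needs no extremal theorem, works for every $u\in I_r^+$, and even gives a strict inequality. The price is that it relies on the global bound $\alpha(R)<(1-f(H_1,H_2))k$, which in turn uses $|L|<\gamma k/4$ from \cref{claim:L}. The paper's route uses only triangle-freeness of $R_b$ inside $I_r^+$ and the definition of $L$ for a single vertex. There is no circularity in your use of \cref{cor:intersection}: it is proved before, and independently of, this claim. You also handle edges that are both red and blue correctly.

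\textbf{Minor point.} Your paragraph on the ``slightly cleaner alternative'' is muddled. The set $I_r^+\setminus N_R(u)$ need not be independent in $R$, since it may contain blue edges. You do not use it, so simply delete it.
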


\begin{poc}
Suppose to the contrary that $|I_r^{+}| > 2(1- f(H_1,H_2))k$. By \cref{claim:noin}, there are no red edges in $R[I_r^{+}]$, meaning $R[I_r^{+}]=R_b[I_r^{+}]$. By \cref{cl:triangle-free}, $R_b[I_r^{+}]$ is triangle-free. Hence, by Mantel's theorem, the minimum degree of $R_b[I_r^{+}]$ is at most $|I_r^{+}|/2$. Consequently, there exists a vertex $v\in I_r^{+}$ with at least $|I_r^{+}|/2-1>(1- f(H_1,H_2))k-1$ non-neighbors within $I_r^{+}$. This implies that \[d_R(v) \le (k-1)-((1- f(H_1,H_2))k-1) \le f(H_1,H_2)k,\] contradicting the fact that $v\in I_r^{+}\subseteq [k]\setminus L$. Similarly, we have  $|I_b^{+}| \le 2(1- f(H_1,H_2))k$.
\end{poc}

To provide a more detailed analysis of the structure of $R$, we introduce the following definitions:
\begin{itemize}
\item  let $I_r^1$ be the set of indices in $[k]\setminus L$ such that $r_i =1$;
\item  let $I_b^{*}=I_b^{+}\setminus I_r^1$ be the set of indices in $[k]\setminus (L\cup I_r^1)$ such that $b_i >0$;
\item  let $I_r^{*}=I_r^{+}\setminus (I_r^1\cup I_b^{*})$ be the set of indices in $[k]\setminus (L\cup I_r^1\cup I_b^{*})$ such that $r_i =1/2$;
\item  let $D=[k]\setminus (I_r^1\cup I_b^*\cup I_r^*)$.
\end{itemize}  
Note that $L\subseteq D$, $I_r^*\cup I_r^1\subseteq I_r^+$ and $I^+=I_r^1\cup I_b^*\cup I_r^*$.

\begin{claim}\label{cl:huoyibu}
$|I_r^{*}|/2+ |I_r^1|+ |I_b^{*}| > \big(f(H_1,H_2)+\frac{\gamma}{4}\big)k$ and  $|I_r^1|+ |I_b^{*}|\ge \big(6f(H_1,H_2)-4+\frac{\gamma}{2}\big)k.$
\end{claim}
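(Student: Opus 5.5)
The plan is to fix one vertex $x\in X_{\bm v}$ (nonempty, since $\chi(G[X_{\bm v}])>C$) and compare its degree with its neighbour densities into the clusters indexed by $I^+$. The first inequality comes from a degree count. The second then follows by feeding the first into the cardinality bounds of \cref{claim:size}.

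\textbf{First inequality.} By the minimum degree assumption and \cref{claim:degree},
\[
(f(H_1,H_2)+\gamma)n\le d(x)<\frac{n}{k}\sum_{i\in I^+}d(x,V_i)+\frac{\gamma}{2}n,
\]
so $\sum_{i\in I^+}d(x,V_i)>(f(H_1,H_2)+\gamma/2)k$. Now split $I^+=I_r^1\cup I_b^*\cup I_r^*$, which is a disjoint union by construction.
\begin{itemize}
\item For every $i$, $d(x,V_i)=d_r(x,V_i)+d_b(x,V_i)\le 1$, because each edge of $G$ receives exactly one colour. This covers $i\in I_r^1\cup I_b^*$.
\item For $i\in I_r^*$, we have $i\notin I_b^*$ and $i\notin I_r^1$. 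Hence $b_i=0$, so $d_b(x,V_i)<\varepsilon_2$. Also $r_i=1/2$, so $d_r(x,V_i)<1/2+\varepsilon_2$. Together, $d(x,V_i)<1/2+2\varepsilon_2$.
\end{itemize}
Summing over these three classes gives
\[
|I_r^1|+|I_b^*|+\tfrac12|I_r^*|+2\varepsilon_2k>\bigl(f(H_1,H_2)+\tfrac{\gamma}{2}\bigr)k.
\]
Since $\varepsilon_2\ll\gamma$, we have $\gamma/2-2\varepsilon_2>\gamma/4$, which yields the first inequality.

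\textbf{Second inequality.} We use \cref{claim:size} twice.
\begin{itemize}
\item Since $I_r^*$ and $I_r^1$ are disjoint subsets of $I_r^+$, we get $|I_r^*|\le 2(1-f)k-|I_r^1|$, writing $f=f(H_1,H_2)$.
\item Since $I_b^*\subseteq I_b^+$, we get $|I_b^*|\le 2(1-f)k$.
\end{itemize}
Substituting the first bound into the first inequality gives $(1-f)k+\tfrac12|I_r^1|+|I_b^*|>(f+\gamma/4)k$. Equivalently, $|I_r^1|+2|I_b^*|>(4f-2+\gamma/2)k$. Subtracting the second bound, $|I_b^*|\le 2(1-f)k$, gives
\[
|I_r^1|+|I_b^*|>(6f-4+\gamma/2)k,
\]
as required.

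No step here is a real obstacle. The only points needing care are, first, that $d(x,V_i)\le 1$ because the colour classes are edge-disjoint, and second, that indices in $I_r^*$ have $b_i=0$ by the definition of $I_b^*$. The remaining work is bookkeeping of the $\varepsilon_2$ error against $\gamma$.
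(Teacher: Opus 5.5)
Your proposal is correct and matches the paper's proof. You use the same degree count from \cref{claim:degree}, the same bound $d(x,V_i)<1/2+2\varepsilon_2$ on $I_r^*$ (and $d(x,V_i)\le 1$ elsewhere), and then \cref{claim:size} applied to $I_r^*\cup I_r^1\subseteq I_r^+$ and $I_b^*\subseteq I_b^+$. The only difference is presentation: the paper packages your two substitution steps into the single identity $|I_r^1|+|I_b^*|=2\bigl(|I_r^*|/2+|I_r^1|+|I_b^*|\bigr)-(|I_r^*|+|I_r^1|)-|I_b^*|$.
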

\begin{poc}
Fix $x\in X_{\bm{v}}$, by \cref{claim:degree}, we have 
\begin{align*}
   d(x)
   &< \frac{n}{k}\sum_{i\in I^{+}}d(x,V_i)+\frac{\gamma}{2}n=\frac{n}{k}\big(\sum_{i\in I_r^{*}}d(x,V_i)+\sum_{i\in I_r^{1}}d(x,V_i)+\sum_{i\in I_b^{*}}d(x,V_i)\big)+\frac{\gamma}{2}n\\
   &\le \frac{n}{k}\big((1/2+2\varepsilon_2)|I_r^{*}|+|I_r^1|+|I_b^{*}|\big)+\frac{\gamma}{2}n.
\end{align*}
  This, together with the minimum degree of $G$, implies that 
\begin{align*}
   |I_r^{*}|/2+ |I_r^1|+ |I_b^{*}| > (f(H_1,H_2)+\gamma/4)k. 
\end{align*}
Hence by \cref{claim:size},
\begin{align*}
|I_r^1|+ |I_b^{*}|&=2(|I_r^{*}|/2+ |I_r^1|+ |I_b^{*}|)-(|I_r^{*}|+|I_r^{1}|)-|I_b^{*}|\\
&\ge 2(f(H_1,H_2)+\gamma/4)k-4(1-f(H_1,H_2))k= \Big(6f(H_1,H_2)-4+\frac{\gamma}{2}\Big)k,
\end{align*}
as claimed.
\end{poc}

Now we are prepared to prove \cref{thm:main}. We split the proof into two parts depending on whether one of the $H_i$ is contained in a blowup of $C_5$.

\subsection{Neither $H_1$ nor $H_2 $ embeds in blowups of $C_5$}\label{case2}

The proof will be divided into cases corresponding to the characterization in \cref{Figure:main0}. 
Recall that $\chi(G_r[X_{\bm{v}}])\cdot\chi(G_b[X_{\bm{v}}])\ge \chi(G[X_{\bm{v}}])>C$. Throughout this subsection, the blue and red colors are symmetric, so we suppose that $\chi(G_r[X_{\bm{v}}])>\sqrt{C}\gg |H_1|,|H_2|,1/\gamma$. 

\begin{case}
 $f(H_1,H_2) = 4/5$.
\end{case}
For this case, we do not need any structural information about $H_1,H_2$ and use only the fact that $\chi(H_1)=\chi(H_2)=3$.
Since $|L| \le \gamma k/4$ and for any $v\in [k]\setminus L$, $d(v) \ge (4/5+\gamma/2)k$,  $R$ contains $K_6$ by Tur\'an's theorem. Since $R(3,3)=6$, $R$ contains a monochromatic triangle, contradicting \cref{cl:triangle-free}.

\begin{case}
$f(H_1,H_2) =7/9$. 
\end{case}
In this case, note that both $H_1,H_2\hookrightarrow \C Z_0$. By  \cref{claim:noin}, there are no blue edges in $R[I_b^{*}]$. If $I_b^{*}$ forms an independent set in $R$, then by \cref{cl:independent-set}, $|I_b^{*}|\le \frac{2}{9}k.$ Combining with~\cref{cl:huoyibu} and~\cref{claim:size}, we get that 
$$7k/9<|I_r^*|+|I_r^1|+|I_b^*|\le |I_r^+|+2k/9\le 2(1-7/9)k+2k/9 =2k/3,$$ 
a contradiction.  Hence, we can fix a red edge $uv\in R[I_b^{*}]$.
\begin{claim}\label{cl:no-P4-in-Ib*}
   We cannot have both $u$ and $v$ sending red edges to $I_r^{1}$.
\end{claim}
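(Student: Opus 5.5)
We argue by contradiction: suppose $uv$ is a red edge of $R[I_b^{*}]$ and there are $w,w'\in I_r^{1}$ (possibly equal) with $uw$ and $vw'$ red edges of $R$. The goal is to find a red copy of $H_1$ via \cref{lmm:embed key}(i), which applies because $\mathrm{emb}(H_1)\ge 1$ in this case, so $H_1\hookrightarrow \C Z_0$. The plan is to exhibit a red path $w\,u\,v\,w'$ of regular pairs in $R$ whose two endpoint clusters both receive red density above $1/2$ from the high-chromatic set $X_{\bm v}$.

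\textbf{Step 1: the endpoints are distinct.} If $w=w'$, then $u,v,w$ form a red triangle in $R$, contradicting \cref{cl:triangle-free}. Hence $w\neq w'$, and $(V_w,V_u,V_v,V_{w'})$ is a sequence of four clusters with consecutive pairs $(\varepsilon_1,d)$-regular in $G_r$.

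\textbf{Step 2: the density hypotheses.} Since $w,w'\in I_r^1$, every $x\in X_{\bm v}$ has $d_r(x,V_w),d_r(x,V_{w'})\ge 1/2+\varepsilon_2$. Thus $(X_{\bm v},V_w)$ and $(X_{\bm v},V_{w'})$ are $(1/2+\varepsilon_2)$-dense in $G_r$. We also have $\chi(G_r[X_{\bm v}])>\sqrt C$ and $|X_{\bm v}|\le n\le Mn/k$, up to rescaling the cluster size.

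\textbf{Step 3: disjointness.} \cref{lmm:embed key} requires $X$ to be disjoint from the four clusters. We handle this by removing $X_{\bm v}\cap(V_w\cup V_u\cup V_v\cup V_{w'})$ from $X_{\bm v}$: the red chromatic number of the union of four clusters is not obviously bounded. Instead, we split $X_{\bm v}$ into the five pieces $X_{\bm v}\setminus\bigcup V_\ast$ and $X_{\bm v}\cap V_\ast$ for $\ast\in\{w,u,v,w'\}$. One piece still has red chromatic number at least $\sqrt C/5$. If that piece lies inside some $V_\ast$, we shrink the clusters instead. Taking halves of $V_w,V_u,V_v,V_{w'}$ disjoint from the piece keeps regularity by \cref{fact:slicinglemma}. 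Density above $1/2$ into a cluster may drop slightly, but $\varepsilon_2\gg\varepsilon_1$ leaves slack once a suitable random half is chosen.

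\textbf{Step 4: conclusion.} Apply \cref{lmm:embed key}(i) with $G_r$, $(V_1,V_2,V_3,V_4)=(V_w,V_u,V_v,V_{w'})$, $\delta=\varepsilon_2/2$ and $X=X_{\bm v}$. This gives $H_1\subseteq G_r$, a contradiction.

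The main obstacle is Step 3, namely keeping density strictly above $1/2$ after discarding the overlap with the clusters. If splitting into halves loses too much density, we will instead use that the overlap is a small portion of a cluster relative to the $\varepsilon_2$ margin. Failing that, we can strengthen the embedding lemma to permit $|X\cap V_i|$ small, as hinted by the commented item (iv) in \cref{lmm:embed key}.
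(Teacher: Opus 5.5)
Your Steps 1, 2 and 4 are exactly the paper's proof: triangle-freeness of $R_r$ forces the two $I_r^1$-endpoints to be distinct, membership in $I_r^1$ gives red $(1/2+\varepsilon_2)$-density from $X_{\bm v}$, and \cref{lmm:embed key}(i) with $H_1\hookrightarrow\C Z_0$ produces a red $H_1$. You can drop Step 3, because \cref{lmm:embed key} is stated for arbitrary $X\subseteq V(G)$ and its proof already removes $S_i\cup T_j^i$ when forming $X_{i,j}$; this matters because your halving argument does not work as written when the high-chromatic piece fills most of a cluster.
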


\begin{poc}
  Suppose, for contradiction, that there exist $uu',vv'\in E(R_r)$ with
  $u',v'\in I_r^{1}$.
  Since $R_r$ is triangle-free, we have $u'\neq v'$.
  Let $V_1,V_2,V_3,V_4$ be the clusters corresponding to
  $u',u,v,v'$, respectively. Then
  \[
  V_1V_2,\quad V_2V_3,\quad V_3V_4
  \]
  are red edges in the reduced graph. Moreover, since $u',v'\in I_r^1$,
  the pairs $(X_{\bm v},V_1)$ and $(X_{\bm v},V_4)$ are both
  $(1/2+\varepsilon_2)$-dense in red.
  Since $H_1\hookrightarrow\C Z_0$, \cref{lmm:embed key}(i)
  gives a red copy of $H_1$, a contradiction.
\end{poc}

We may then assume that $u$ has no red neighbors in $I_r^{1}$.
  Let $A_1$ be the set of blue neighbors of $u$ in $I_r^{1}$ and $A_2$ be the set of red neighbors of $u$ in $I_b^{*}$. Then $A_1\subseteq N_{R_b}(u)\cap I_r^+$ and $A_2\subseteq N_{R_r}(u)\cap I_b^+$.
  Hence by~\cref{cor:intersection}, $|A_1|,|A_2|\le  2k/9$.
  As $u\in [k]\setminus L$, we have
\begin{align*}
    \Big(\frac{7}{9}+\frac{\gamma}{2}\Big)k\le d_R(u) \le |A_1|+|A_2|+(k-|I_r^{1}|-|I_b^{*}|)\le \frac{13k}{9}-|I_r^{1}|-|I_b^{*}|.
\end{align*}
This implies that $|I_r^{1}|+ |I_b^{*}|\le \big(\frac{2}{3}-\frac{\gamma}{2}\big)k<(6f(H_1,H_2)-4)k,$  contradicting \cref{cl:huoyibu}.

\begin{figure}[H]
    \centering

    \begin{subfigure}[t]{0.48\textwidth}
        \centering
        \begin{tikzpicture}[scale=0.85, transform shape]
          \coordinate (a) at (3,0);
          \coordinate (a1) at (1.35-1.7,2.5);
          \coordinate (a2) at (1.35+2,3);
          \coordinate (a3) at (4.65-2,3);
          \coordinate (a4) at (4.65+1.7,2.5);
          \draw [red,line width=0.901pt] (a) -- (a1);
          \draw [blue,line width=0.901pt] (a) -- (a2);
          \draw [red,line width=0.901pt] (a) -- (a3);
          \draw [blue,line width=0.901pt] (a) -- (a4);

          \filldraw [blue,line width=0.901pt, fill=white] (4.65,3) ellipse (2 and 1.2);
          \filldraw [red,line width=0.901pt, fill=white] (1.35,3) ellipse (2 and 1.2);

          \draw [red,line width=0.901pt] (3,0) -- (-0.8,2+0.1);
          \draw [red,line width=0.901pt] (3,0) -- (-0.8,1-0.1);
          \filldraw [red,line width=0.901pt, fill=white] (-0.8,1.5) ellipse (0.7 and 0.7);
          \node at (-0.8,1.5) {$I_r^{*}$};
          
          \draw[white, line width=1pt] (2.99,2.32) arc (-35:35:2 and 1.2);
          \draw[blue, line width=1pt] (2.99,2.32) arc (-35:35:2 and 1.2);

          \coordinate (u) at (4,3);
          \coordinate (x1) at (1.5,3.8);
          \coordinate (x2) at (1.5,2.2);
          \draw [blue,line width=0.901pt] (u) -- (x1);
          \draw [blue,line width=0.901pt] (u) -- (x2);
          \coordinate (y1) at (5.2,3.8);
          \coordinate (y2) at (5.2,2.2);
          \draw [red,line width=0.901pt] (u) -- (y1);
          \draw [red,line width=0.901pt] (u) -- (y2);

          \draw (0,-0.5) rectangle (6,0.5);
          
          \filldraw [blue,line width=0.901pt, fill=white] (1.5,3) ellipse (0.6 and 0.8);
          \filldraw [red,line width=0.901pt, fill=white] (5.2,3) ellipse (0.6 and 0.8);

          \filldraw[fill=white] (4,3) circle (0.1cm);
          \filldraw[fill=white] (3,0) circle (0.1cm);
          
          \node at (4,3.25) {$u$};
          \node at (1.5,3) {$A_1$};
          \node at (5.2,3) {$A_2$};
          \node at (3,-0.25) {$x$};
          \node at (0.5,-0.2) {$X_{\bm{v}}$};
          \node at (0,3) {$I_r^{1}$};
          \node at (6.2,3) {$I_b^{*}$};
        \end{tikzpicture}
        \caption{Illustration of Case 2. We consider clusters as vertices in $R$ and disregard clusters with low density with respect to $x$.}
        \label{fig:case2}
    \end{subfigure}
    \hfill
    \begin{subfigure}[t]{0.48\textwidth}
        \centering
        \begin{tikzpicture}[scale=0.85, transform shape]
          \coordinate (a) at (3,0);
          \coordinate (a1) at (1.35-1.7,2.5);
          \coordinate (a2) at (1.35+2,3);
          \coordinate (a3) at (4.65-2,3);
          \coordinate (a4) at (4.65+1.7,2.5);
          \coordinate (v1) at (2.7,3.6);
          \coordinate (v2) at (5.2,3.6);
          \draw [red,line width=0.901pt] (a) -- (a1);
          \draw [blue,line width=0.901pt] (a) -- (a2);
          \draw [red,line width=0.901pt] (a) -- (a3);
          \draw [blue,line width=0.901pt] (a) -- (a4);
          
          \filldraw [blue,line width=0.901pt, fill=white] (4.65,3) ellipse (2 and 1.2);
          \filldraw [red,line width=0.901pt, fill=white] (1.35,3) ellipse (2 and 1.2);

          \draw[white, line width=1pt] (2.99,2.32) arc (-35:35:2 and 1.2);
          \draw[blue, line width=1pt] (2.99,2.32) arc (-35:35:2 and 1.2);
          
          \coordinate (u) at (4,3);
          \coordinate (x1) at (1.5,3.8);
          \coordinate (x2) at (1.5,2.2);
          \draw [blue,line width=0.901pt] (u) -- (x1);
          \draw [blue,line width=0.901pt] (u) -- (x2);
          \coordinate (y1) at (5.2,3.8);
          \coordinate (y2) at (5.2,2.2);
          \draw [red,line width=0.901pt] (u) -- (y1);
          \draw [red,line width=0.901pt] (u) -- (y2);

          \draw (0,-0.5) rectangle (6,0.5);
          
          \filldraw [blue,line width=0.901pt, fill=white] (1.5,3) ellipse (0.6 and 0.8);
          \filldraw [red,line width=0.901pt, fill=white] (5.2,3) ellipse (0.6 and 0.8);

          \filldraw[fill=white] (3,0) circle (0.1cm);
          \draw [red,line width=0.901pt] (v2) -- (v1);
          \draw [red,line width=0.901pt] (v2) -- (u);
          \filldraw[fill=white] (v1) circle (0.1cm);
          \filldraw[fill=white] (4,3) circle (0.1cm);
          \filldraw[fill=white] (v2) circle (0.1cm);
          
          \node at (2.4,3.7) {$v_1$};
          \node at (5.2,3.95) {$v_2$};
          \node at (4,3.25) {$v_3$};
          \node at (1.5,3) {$A_1$};
          \node at (5.2,3) {$A_2$};
          \node at (3,-0.25) {$x$};
          \node at (0.5,-0.2) {$X_{\bm{v}}$};
          \node at (0,3) {$I_r^*\cup I_r^1$};
          \node at (6.2,3) {$I_b^{*}$};
        \end{tikzpicture}
        \caption{Illustration of Case 3. Here, $v_1\in I_r^1$.}
        \label{fig:case3}
    \end{subfigure}

    \caption{Illustrations of Cases 2 and 3.}
\end{figure}
 
\begin{case}
    $f(H_1,H_2) =3/4$.
\end{case}

 In this case, both $H_1,H_2\hookrightarrow \C Z_1$. By \cref{claim:size}, we have $|I_b^{*}|\le \frac{1}{2}k$, while we have  $|I_r^{1}|+|I_b^{*}|\ge (\frac{1}{2}+\frac{\gamma}{2})k$ by \cref{cl:huoyibu}. Hence,  $I_r^{1}\neq \varnothing$.

\begin{claim}\label{claim:sianke}
For any vertex $v\in I_r^{1}$, $v$ has no red neighbors in $I_b^{*}$.
\end{claim}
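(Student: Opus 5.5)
We argue by contradiction, mirroring \cref{cl:no-P4-in-Ib*}. Suppose some $v\in I_r^1$ has a red neighbour $u\in I_b^*$ in $R$. We look for a red path $v'\,v\,u\,u'$ in $R_r$ whose two end clusters both have substantial red density towards $X_{\bm v}$. The endpoint $v$ lies in $I_r^1$, so $(X_{\bm v},V_v)$ is $(1/2+\varepsilon_2)$-dense in red. The other endpoint $u'$ only needs $r_{u'}>0$, that is, $u'\in I_r^+$, which makes $(X_{\bm v},V_{u'})$ $\varepsilon_2$-dense in red. With the roles $(V_1,V_2,V_3,V_4)=(V_{u'},V_u,V_v,V_{v'})$ we need $V_4$ to be the $(1/2+\delta)$-dense end, so we reorder the path as $u',u,v,v'$. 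Here $u'\in I_r^+$ is a red neighbour of $u$, and $v'$ is a red neighbour of $v$.

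That ordering is wrong, however: \cref{lmm:embed key}(ii) needs the $(1/2+\delta)$-dense cluster at an end of a path of three regular red pairs. So we use the path $V_1V_2V_3V_4=V_{u'}V_wV_uV_v$, where $v\in I_r^1$ is the dense end, $u\in I_b^*$ is its red neighbour, $w$ is a red neighbour of $u$, and $u'\in I_r^+$ is a red neighbour of $w$. Then $(X_{\bm v},V_1)$ is $\varepsilon_2$-dense in red and $(X_{\bm v},V_4)$ is $(1/2+\varepsilon_2)$-dense in red. Since $H_1\hookrightarrow\C Z_1$, \cref{lmm:embed key}(ii) applied to $G_r$ (with $\delta=\varepsilon_2$, $X=X_{\bm v}$, $|X_{\bm v}|\le n\le Mn/k\cdot k$ after rescaling clusters, and $\chi(G_r[X_{\bm v}])>\sqrt C$) gives a red $H_1$, a contradiction. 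It therefore remains to find such $w,u'$.

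\textbf{Degree counting to find $w$ and $u'$.} Assume no such $w,u'$ exist, so every red neighbour $w$ of $u$ has no red neighbour in $I_r^+$. Recall that $u\notin L$, so $d_R(u)\ge(3/4+\gamma/2)k$. Split $N_R(u)$ into three parts:
\begin{itemize}
\item $N_{R_b}(u)\cap I_r^+$, of size at most $k/4$ by \cref{cor:intersection};
\item $N_{R_r}(u)\cap I_b^+$, of size at most $k/4$ by \cref{cor:intersection};
\item the neighbours outside $I^+$, together with red neighbours in $I_r^+\setminus I_b^+$ and blue neighbours in $I_b^+\setminus I_r^+$.
\end{itemize}
Note that $u\in I_b^+$, so by \cref{claim:noin} it has no blue neighbours in $I_b^+$. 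Together with $|I_r^1|+|I_b^*|\ge(1/2+\gamma/2)k$ from \cref{cl:huoyibu}, this constrains things: $d_R(u)\le k/4+k/4+(k-|I_r^1\cup I_b^*|)+|N_{R_r}(u)\cap I_r^1|$. If this is not already a contradiction, then $u$ has a red neighbour $w\in I_r^1$. In that case the alternative path $V_wV_uV_v$ has both ends in $I_r^1$, and we only need one more red edge $ww''$ (or $vv''$) in $R_r$ with $w''\ne u$. Such an edge exists because $w\notin L$ and $|N_{R_b}(w)|$ is bounded similarly. We then take $V_1V_2V_3V_4=V_{w''}V_wV_uV_v$, with $w''$ arbitrary; but $V_1$ must have positive red density towards $X_{\bm v}$. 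To secure this, we instead take $V_4=V_w$ or $V_v$ dense, $V_1$ any $I_r^+$ vertex adjacent in red to the next cluster, and use the counting above to force its existence.

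\textbf{Main obstacle.} The hard step is the counting that guarantees the fourth cluster, adjacent in red, with positive red density. I would first try to show that $v$ itself has a red neighbour in $I_r^+$ other than $u$. Suppose instead that all red neighbours of $v$ lie outside $I_r^+$. Then $v$'s neighbourhood splits into $N_{R_r}(v)\cap I_b^+$ (an independent set, of size at most $k/4$), the blue neighbours in $I_r^+$ (at most $k/4$), and the part outside $I^+$, which has size at most $k-|I_r^1|-|I_b^*|\le(1/2-\gamma/2)k$. This totals less than $(3/4+\gamma/2)k$, contradicting $v\notin L$, provided the red neighbours in $I_b^+\setminus I_r^+$ are accounted for by the first term. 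Once such a red neighbour $v''\in I_r^+$ of $v$ is found, we use the path $V_{v''}V_vV_uV_{u^\dagger}$, where $u^\dagger\in I_r^1$ is obtained symmetrically, or more simply we reverse the roles so that the $(1/2+\delta)$-dense end is $v$. I expect that keeping track of which end is dense (as opposed to merely $\delta$-dense) will need the most care.
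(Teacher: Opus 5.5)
Your target configuration is the right one: a red path $u'\,w\,u\,v$ in $R_r$, with $v\in I_r^1$ as the $(1/2+\varepsilon_2)$-dense end and $u'$ of positive red density, fed into \cref{lmm:embed key}(ii). The gap is that you never produce $w$ and $u'$, and the routes you try cannot work.

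Your count on $u$ only gives $d_R(u)\le(1-\gamma/2)k+|N_{R_r}(u)\cap I_r^1|$, which is no contradiction. Both fallbacks require a red edge inside $I_r^+$: a red neighbour $w''\in I_r^+$ of some $w\in I_r^1$, or a red neighbour $v''\in I_r^+$ of $v\in I_r^1$. \cref{claim:noin} forbids such edges outright. So your supposition ``all red neighbours of $v$ lie outside $I_r^+$'' is always true and cannot yield a contradiction. Indeed your count there fails. It omits the blue neighbours of $v$ in $I_b^*\setminus I_r^+$. It also uses the weak bound $k-|I_r^1|-|I_b^*|\le(1/2-\gamma/2)k$, and even numerically $k/4+k/4+(1/2-\gamma/2)k$ exceeds $(3/4+\gamma/2)k$.

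The missing idea is to choose $w$ inside $I_b^*$ and do the degree count on $w$. By the first inequality of \cref{cl:huoyibu} and \cref{claim:size}, $|I_b^*|>(3/4+\gamma/4)k-|I_r^*\cup I_r^1|\ge(1/4+\gamma/4)k$. Hence $d_R(u)>k-|I_b^*|$, which gives a neighbour $w\in I_b^*$ of $u$, and the edge $uw$ is red by \cref{claim:noin}.

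If $w$ has a red neighbour $u'\in I_r^*\cup I_r^1$, the path is valid. Indeed $u'\neq v$ because $R_r$ is triangle-free, and $u'\neq u$ and $w\neq v$ because these lie in disjoint sets. Then \cref{lmm:embed key}(ii) applied to $V_{u'}V_wV_uV_v$ finishes the proof.

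Otherwise, since $w\in I_b^+$, all its neighbours in $I_b^*$ are red, and all its neighbours in $I_r^*\cup I_r^1$ are blue and lie in $I_r^+$. By \cref{cor:intersection} this gives $d_R(w)\le k/4+k/4+|D|$. Here $|D|=k-|I^+|<(1/4-\gamma/4)k$, because $|I^+|\ge|I_r^*|/2+|I_r^1|+|I_b^*|$. This contradicts $w\notin L$.

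Two ingredients are essential here and absent from your proposal. First, the counted vertex must lie in $I_b^+$, so that it has no blue neighbours in $I_b^*$. Second, you need the sharp bound on $|D|$ coming from \cref{cl:huoyibu}.
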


\begin{poc}
Suppose to the contrary that there exists a red edge $v_1v_2\in E(R_r)$ with $v_1\in I_r^1$ and $v_2\in I_b^{*}$.  
As $v_2\in [k]\setminus L$, it follows that $d_R(v_2) \ge (3/4+\gamma/2)k$. By \cref{cl:huoyibu} and \cref{claim:size}, we obtain
\begin{align*}
|I_b^{*}|\ge  \Big(\frac{3}{4}+\frac{\gamma}{4}\Big)k-(|I_r^{*}|/2 + |I_r^1|)\ge \Big(\frac{3}{4}+\frac{\gamma}{4}\Big)k-|I_r^{*}\cup I_r^1|\ge \Big(\frac{1}{4}+\frac{\gamma}{4}\Big)k.
\end{align*}
Since $d_R(v_2)>k-|I_b^{*}|$, we can find $v_3\in N_R(v_2)\cap I_b^{*}$. By \cref{claim:noin}, $v_2v_3$ must be a red edge.

We will show that $v_3$ has no red neighbors in $I_r^{*}\cup I_r^{1}$. Otherwise, assume that there exists $v_4\in I_r^{*}\cup I_r^{1}$ with $v_3v_4\in E(R_r)$. As $R_r$ is triangle-free, $v_4\neq v_1$. Apply \cref{lmm:embed key}(ii) to the red path with clusters ordered as
$v_4,v_3,v_2,v_1$. More precisely, let $V_1,V_2,V_3,V_4$ be the clusters
corresponding to $v_4,v_3,v_2,v_1$, respectively. Then
$(X_{\bm v},V_1)$ is $\varepsilon_2$-dense and $(X_{\bm v},V_4)$ is
$(1/2+\varepsilon_2)$-dense in red. Since $H_1\hookrightarrow\C Z_1$,
\cref{lmm:embed key}(ii) gives a red copy of $H_1$, a contradiction.

By \cref{cl:huoyibu} we can deduce that $|D|=k-(|I_r^{*}|+|I_r^1|+|I_b^{*}|)<(\frac{1}{4}-\frac{\gamma}{4})k.$ Hence by \cref{cor:intersection} and that $v_3\in [k]\setminus L$, we obtain
\begin{equation}\label{eq:non-L-vertex}
  \Big(\frac{3}{4}+\frac{\gamma}{2}\Big)k\le d_R(v_3)\le |N_{R_r}(v_3)\cap I_b^+|+|N_{R_b}(v_3)\cap I_r^+|+|D|\le \Big(\frac{3}{4}-\frac{\gamma}{4}\Big)k,
\end{equation}  
a contradiction.
\end{poc}

Choose $v_1\in  I_r^{1}$.   According to \cref{claim:noin} and \cref{claim:sianke}, $v_1$ has no red neighbors in $I_r^{+}$ and $I_b^{*}$. Therefore, the red neighbors of $v_1$ can only be in $D$. That is, $N_{R_r}(v_1)\subseteq D$. 
  
  We now consider the blue neighbors of $v_1$ out of $D$.
  Define $A_3:=N_{R_b}(v_1)\setminus D.$
  By \cref{cl:triangle-free}, there are no blue edges in $A_3$. Thus, $R[A_3]=R_r[A_3]$ is triangle-free.
  Therefore, there exists a vertex $w\in A_3$ such that $|N_R(w)\cap A_3|\le |A_3|/2$. 
  Since $w\in [k]\setminus D\subseteq [k]\setminus L$, we have
\begin{align*}
   \Big(\frac{3}{4}+\frac{\gamma}{2}\Big)k\le  d_R(w) \le |N_R(w)\cap A_3| + (k-|A_3|)\le k-\frac{|A_3|}{2},
\end{align*}
which infers that $|A_3|\le (1/2-\gamma)k$. As $v_1\in[k]\setminus L$ and recall that $N_{R_r}(v_1)\subseteq D$, by \cref{claim:L}, we have
\begin{align*}
   \Big(\frac{3}{4}+\frac{\gamma}{2}\Big)k\le d_R(v_1)\le |D|+|A_3| < \frac{3k}{4},
\end{align*}
which contradicts \cref{claim:L}.

\begin{case}
$f(H_1,H_2) = 5/7$. 
\end{case}

  In this case, both $H_1,H_2\hookrightarrow \C Z_2$ and $\delta_{\chi}(H_1),\delta_{\chi}(H_2)\le 1/3$. If there exists an index $i\in [k]$ such that $r_i=1$, then  for any $u\in X_{\bm{v}}$, $|N(u)\cap V_i|\ge (1/2+\varepsilon_2) |V_i|$. Hence for any $u,v\in X_{\bm{v}}$, $|N(u)\cap N(v)\cap V_i|\ge 2\varepsilon_2 |V_i|$. As $\chi(G_r[X_{\bm{v}}])$ is large, we can apply \cref{lmm:0-1/2} to get a copy of $H_1$ in $G_r[X_{\bm{v}}\cup V_i]$, a contradiction. Thus, $I_r^1=\varnothing$ and $I_b^*=I_b^+$. Note also that \cref{claim:size} tells us $\max\{|I_r^{*}|,|I_b^{+}|\}\le \frac{4}{7}k$. Combining all these with \cref{cl:huoyibu}, we have  
  $$|I_r^{*}|> 2\left(\Big(f(H_1,H_2)+\frac{\gamma}{4}\Big)k-|I_b^*|\right) \ge \Big(\frac{2}{7}+\frac{\gamma}{2}\Big)k$$ 
  and 
  $$|I_b^{+}|>\Big(f(H_1,H_2)+\frac{\gamma}{4}\Big)k-|I_r^*|/2\ge \Big(\frac{3}{7}+\frac{\gamma}{4}\Big)k.$$ 
  Then by \cref{cl:huoyibu} again, we have
  $$|I_r^{*}| + |I_b^{+}|=|I_r^{*}|/2+(|I_r^{*}|/2 + |I_b^{+}|)\ge \Big(\frac{6}{7}+\frac{\gamma}{2}\Big)k.$$ 
  Consequently,  $|D|=k-|I^+|=k - |I_r^{*}| - |I_b^{+}| \le (\frac{1}{7}-\frac{\gamma}{2})k$.
  
  It follows from \cref{claim:noin} that $R[I_b^+]$ contains no blue edges.
  If there is also no red edge in $R[I_b^+]$, then $I_b^+$ is an independent set of $R$.
  But then according to \cref{cl:independent-set}, $|I_b^+|\le \frac{2}{7}k$, a contradiction.

We may then assume that $R[I_b^+]$ contains a red edge, say $uv$. We claim that $u$ and $v$ cannot both have red neighbors in $I_r^{*}$. Indeed, suppose, for contradiction, that there exist $uu',vv'\in E[R_r]$ with $u',v'\in I_r^{*}$. According to \cref{cl:triangle-free}, $R_r$ is triangle-free, implying $u'\neq v'$.
Let $V_1$, $V_2$, $V_3$, $V_4$ denote the clusters corresponding to $u'$, $u$, $v$, $v'$, respectively.  Since $u',v'\in I_r^{*}$, $( X_{\bm{v}}, V_1)$ and $( X_{\bm{v}},V_4)$ are both $\varepsilon_2$-dense under $G_r$, by \cref{lmm:embed key}(iii) and $H_1\hookrightarrow \C Z_2$, we obtain a copy of $H_1$ in $G_r$, a contradiction.

Thus without loss of generality, assume $u$ has no red neighbors in $I_r^{*}$. Recall that $|D| \le (\frac{1}{7}-\frac{\gamma}{2})k$.
Similar to~\eqref{eq:non-L-vertex}, we get from \cref{cor:intersection} and $u\not\in L$ that
  \begin{align*}
 \Big(\frac{5}{7}+\frac{\gamma}{2}\Big)k\le d_R(u)\le |N_{R_r}(u)\cap I_b^+|+|N_{R_b}(u)\cap I_r^+|+|D|\le  \Big(\frac{5}{7}-\frac{\gamma}{2}\Big)k,
\end{align*}
a contradiction.  

\begin{case}\label{case5}
$f(H_1,H_2) = 2/3$.
\end{case}

 In this case $\delta_\chi(H_1)=\delta_\chi(H_2)=0$. Recall that $\chi(G_r[X_{\bm{v}}])$ is large. If there exists $i\in I_r^+$, then by \cref{lmm:0-1/2}, $H_1\subseteq G_r[X_{\bm{v}}\cup V_i]$, a contradiction. Thus, $I_r^+=\varnothing$ and then by \cref{cl:huoyibu}, we have $|I_b^{+}| > \big(\frac{2}{3}+\frac{\gamma}{4}\big)k$. However, \cref{claim:size} implies that $|I_b^{+}|\le 2(1-2/3)k=2k/3$, a contradiction.

\subsection{One of $H_1$ or $H_2$ embeds in blowups of $C_5$}\label{case1}
Assume without loss of generality that $H_1\hookrightarrow \C C_5$. Recall that we have $\chi(G_r[X_{\bm{v}}])\chi(G_b[X_{\bm{v}}])\ge \chi(G[X_{\bm{v}}])>C$ and so one of $\chi(G_r[X_{\bm{v}}])$ and $\chi(G_b[X_{\bm{v}}])$ is at least $\sqrt{C}$.

\begin{case}
 $f(H_1,H_2) = 3/4$.    
\end{case}
In this case, we use only the fact that $H_1\hookrightarrow \C C_5$.
Recall that $|L| \le \gamma k/4$, and for any $v\in [k]\setminus L$, $d_R(v) \ge (3/4+\gamma/2)k$.  
Hence, we can bound the size of $R$ as follows: 
$e(R)\ge \frac{1}{2}\big(1-\frac{\gamma}{4}\big)k\cdot\big(\frac{3}{4}+\frac{\gamma}{2}\big)k>\frac{3}{8}k^2.$
By Tur\'an's theorem, $R$ contains $K_5$ as a subgraph. By \cref{cl:triangle-free} this $K_5$ in $R$ contains no monochromatic triangles. Note that the only 2-edge-coloring of $K_5$ without monochromatic $K_3$ is the union of two edge-disjoint monochromatic $C_5$'s. Thus, there is a red $C_5$ and a blue $C_5$ in $R$. Hence as $H_1\hookrightarrow \C C_5$, we have $H_1\subseteq G_r$ by~\cref{thm:embedding-lemma}, a contradiction. 

\begin{case}
$f(H_1,H_2) = 5/7$.  
\end{case}
 In this case,  $\max\{\delta_\chi(H_1),\delta_\chi(H_2)\}=1/3$. If $\chi(G_r[X_{\bm v}])\ge \sqrt C$, then $r_i\le 1/2$ for every
$i\in[k]$. Indeed, if $r_i=1$ for some $i$, then every two vertices of
$X_{\bm v}$ have at least $2\varepsilon_2|V_i|$ common red neighbours in
$V_i$. Since $\delta_\chi(H_1)\le 1/3$ and $\chi(G_r[X_{\bm v}])$ is large,
\cref{lmm:0-1/2} gives a red copy of $H_1$ in $G_r[X_{\bm v}\cup V_i]$,
a contradiction.

 Hence $I_r^1=\varnothing$, and we have  $I_b^{*}=I_b^{+}$, $I_r^*=I_r^+\setminus I_b^*$ and $I^+=I_b^*\cup I_r^*$. 
By \cref{cl:huoyibu}, we have  
$|I_b^{+}|\ge|I_b^*|\ge (6f(H_1,H_2)-4+\gamma/2)k>(2/7+\gamma/4)k$. Moreover, combining this with \cref{claim:size} we have
\[
(5/7+\gamma/4)k<|I_r^{*}|/2+|I_b^{*}|\le \min\{|I_r^{*}|/2+4k/7, |I^{+}|/2+2k/7\}.
\]
Hence we also have  
$|I_r^{*}|>(2/7+\gamma/2)k$ and $|I^{+}|>(6/7+\gamma/2)k.$

Note that the above proof does not rely on the assumption that $H_1 \hookrightarrow \C C_5$. Hence, if $\chi(G_b[X_{\bm{v}}])\ge\sqrt{C}$, we may swap red and blue and repeat the above argument to obtain $|I_b^{+}|>(2/7+\gamma/4)k$ and $|I^{+}|>(6/7+\gamma/2)k$. Therefore, in either case (i.e., whether $\chi(G_r[X_{\bm{v}}])$ or $\chi(G_b[X_{\bm{v}}])$ is unbounded), we have
\[
|I^{+}|>(6/7+\gamma/2)k \quad \text{and} \quad |I_b^{+}|>(2/7+\gamma/4)k.
\]

We will show that  $|I^{+}|<6k/7$ to get a contradiction. Recall that $|I_b^{+}|>(2/7+\gamma/4)k$, so by \cref{cl:independent-set}, $I_b^{+}$ is not an independent set in $R$.  
According to \cref{claim:noin}, there are no blue edges within $I_b^{+}$, meaning that there exists at least one red edge $uv\in R[I_b^{+}]$.

\begin{claim}\label{cl:red-edge}
     If $uv$ is a red edge in $R[I_b^{+}]$, then $u$ and $v$ cannot both have red neighbors in $I_r^{+}$. 
\end{claim}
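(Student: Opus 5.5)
The plan is to argue by contradiction and produce a red copy of a blowup of $C_5$, which contains $H_1$ because $H_1\hookrightarrow \C C_5$. The chromatic number of $X_{\bm v}$ should not be needed, only its size. Suppose $uv$ is a red edge of $R[I_b^+]$, and suppose there are red edges $uu',vv'\in E(R_r)$ with $u',v'\in I_r^+$. By \cref{cl:triangle-free}, $R_r$ is triangle-free, so $u'\neq v'$. Hence $u',u,v,v'$ is a red path in $R_r$ on four distinct clusters. Write $V_1,V_2,V_3,V_4$ for the clusters corresponding to $u',u,v,v'$, so that $(V_1,V_2)$, $(V_2,V_3)$, $(V_3,V_4)$ are $(\varepsilon_1,d)$-regular in $G_r$.

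\textbf{Step 1: common red neighbourhoods in $X_{\bm v}$.} Since $u',v'\in I_r^+$, we have $r_{u'},r_{v'}>0$. Therefore every $x\in X_{\bm v}$ satisfies $d_r(x,V_1)\ge\varepsilon_2$ and $d_r(x,V_4)\ge\varepsilon_2$; that is, $(X_{\bm v},V_1)$ and $(X_{\bm v},V_4)$ are $\varepsilon_2$-dense in $G_r$. Moreover $|X_{\bm v}|\ge\chi(G[X_{\bm v}])>C$ is large. So \cref{coro: large common}, applied with $(\varepsilon,t)=(\varepsilon_2,t)$ and $t=|H_1|$, gives a set $W\subseteq X_{\bm v}$ with $|W|=t$. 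Its common red neighbourhoods $S_1:=N_{G_r}(W)\cap V_1$ and $S_4:=N_{G_r}(W)\cap V_4$ have size at least $\beta|V_1|$, where $\beta=\beta(\varepsilon_2,t)\gg\varepsilon_1$.

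\textbf{Step 2: closing the cycle.} Next I find $W_2\subseteq V_2$ and $W_3\subseteq V_3$, each of size $t$, such that $G_r[S_1',W_2]$, $G_r[W_2,W_3]$ and $G_r[W_3,S_4']$ are complete bipartite for some $S_1'\subseteq S_1$ and $S_4'\subseteq S_4$ of size $t$. This is exactly $t$-goodness of $(S_1',S_4')$ with respect to $(V_2,V_3)$. The argument is the one in the proof of \cref{lmm:re-par}, run in $G_r$:
\begin{itemize}
\item Apply \cref{cl:select-ST} with $X=S_1$; this is allowed since $|S_1|\ge\beta|V_1|\ge\varepsilon_1|V_1|$. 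It gives $W_2\subseteq V_2$ of size $t$ whose common red neighbourhood meets $S_1$ in at least $\beta'|S_1|$ vertices and meets $V_3$ in at least $\beta'|V_3|$ vertices.
\item Apply \cref{cl:select-T} with $W=N_{G_r}(W_2)\cap V_3$ and $Y=S_4$. It gives $W_3\subseteq W$ of size $t$ with $|N_{G_r}(W_3)\cap S_4|\ge\gamma'|S_4|\ge t$.
\end{itemize}
Alternatively, the slicing lemma (\cref{fact:slicinglemma}) followed by the embedding lemma (\cref{thm:embedding-lemma}) on the path $S_1V_2V_3S_4$ would do the same job.

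\textbf{Step 3: conclusion.} The sets $W$, $S_1'$, $W_2$, $W_3$, $S_4'$ span a red $C_5[t]$, with $W$ complete to $S_1'$ and to $S_4'$. Since $t=|H_1|$ and $H_1\hookrightarrow\C C_5$, this gives $H_1\subseteq G_r$, contradicting the choice of colouring. Hence $u$ and $v$ cannot both have red neighbours in $I_r^+$.

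The main obstacle is bookkeeping of constants. We need $\beta(\varepsilon_2,t)\gg\varepsilon_1$ so that the regularity claims apply inside $S_1$ and $S_4$; this holds by the hierarchy $\varepsilon_1\ll\varepsilon_2$. We also need to check that $W$, which lies in $X_{\bm v}$, can be chosen disjoint from $V_1,\dots,V_4$ and its chosen neighbours. This is fine because $V_1,\dots,V_4$ have only $O(t)$ vertices used, and $X_{\bm v}$ can be replaced by $X_{\bm v}\setminus(V_1\cup\dots\cup V_4)$, which still has all the required density properties up to the negligible loss.
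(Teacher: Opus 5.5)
Your proof is correct and is essentially the paper's argument. The paper likewise uses \cref{coro: large common} to get $X'\subseteq X_{\bm v}$ of size $|H_1|$ with large common red neighbourhoods in $V_{u'}$ and $V_{v'}$. It then finds the red $t$-blowup of the path with the slicing lemma and \cref{thm:embedding-lemma} (your stated alternative) rather than with \cref{cl:select-ST} and \cref{cl:select-T}, and closes it into a red $C_5[t]$.

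One small correction to your final remark. Replacing $X_{\bm v}$ by $X_{\bm v}\setminus(V_1\cup\dots\cup V_4)$ is not justified, because $|X_{\bm v}|$ is only guaranteed to exceed a constant and this deletion could remove almost all of it. It is also unnecessary. $S_1$ and $S_4$ avoid $W$ automatically, being common neighbourhoods of $W$. As the paper does, you can then take $W_2\subseteq V_2\setminus W$ and $W_3\subseteq V_3\setminus W$.
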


\begin{poc}
  Suppose, for contradiction, that there exist $uu',vv'\in E[R_r]$ with $u',v'\in I_r^{+}$.
  According to \cref{cl:triangle-free}, $R_r$ is triangle-free, implying that $u'\neq v'$.
  Let $V_u$, $V_v$, $V_{u'}$, $V_{v'}$ denote the clusters corresponding to $u$, $v$, $u'$, $v'$ respectively.

Since $|X_{\bm{v}}|\ge \chi(G[X_{\bm{v}}])>C$ is sufficiently large,  applying \cref{coro: large common}, there exists a subset $X'\subseteq X_{\bm{v}}$ with $|X'|=|H_1|=:t$ such that 
\begin{align*}
   |N_{G_r}(X')\cap V_{u'}|\ge \alpha|V_{u'}|, \quad |N_{G_r}(X')\cap V_{v'}|\ge \alpha|V_{v'}|,
\end{align*}   
where $\alpha=\alpha(t,\varepsilon_2)>0$. Pick $W_1\subseteq N_{G_r}(X')\cap V_{u'}$ and $W_4\subseteq N_{G_r}(X')\cap V_{v'}$
with $|W_1|=|W_4|=\alpha |V_{u'}|$.
Moreover, we pick  $W_2\subseteq V_u\setminus X'$ and $W_3\subseteq V_v\setminus X'$ with $|W_2|=|W_3|=\alpha |V_{u}|$.

By \cref{fact:slicinglemma}, for each $i\in[3]$, $(W_i,W_{i+1})$ is $(\varepsilon_1/\alpha,d-\varepsilon_1)$-regular. By \cref{thm:embedding-lemma}, there exists a $t$-blowup of $P_4$ with parts coming from $W_1$, $W_2$, $W_3$, $W_4$, respectively. Then, adding $X'$ we get a red copy of $C_5[t]$, which contains $H_1$, a contradiction.
\end{poc}

Fix a red edge $uv\in R[I_b^{+}]$. Without loss of generality, assume that $u$ has no red neighbors in $I_r^{+}$, i.e.~$N_{R_r}(u)\cap I_r^{+}=\varnothing$. Let 
$A_1=N_{R_r}(u)\cap I_b^{+}=N_{R_r}(u)\cap I^{+}$, 
$A_2=N_{R_b}(u)\cap I_r^{+}=N_{R_b}(u)\cap I^{+}$.

According to \cref{cor:intersection}, we have $|A_1|,|A_2|\le 2k/7$.  As $u\in I_b^{+}\subseteq [k]\setminus L$, we have
\[
\Big(\frac{5}{7}+\frac{\gamma}{2}\Big)k \le d_R(u)\le |N_R(u)\cap I^{+}|+|D|\le |A_1|+|A_2|+ (k-|I^{+}|)\le \frac{11k}{7}-|I^+|,
\]
implying that $|I^{+}|\le \big(6/7-\gamma/2\big)k$, a contradiction.

\begin{case}
   $f(H_1,H_2) = 2/3$.  
\end{case}

 In this case $\delta_\chi(H_1)=\delta_\chi(H_2)=0$. 
If $\chi(G_r[X_{\bm{v}}])\ge\sqrt{C}$, then we have the identical case as Case~5. Suppose then $\chi(G_b[X_{\bm{v}}])\ge\sqrt{C}$. 
Similarly by \cref{lmm:0-1/2}, $I_b^+=\varnothing$. Thus by the first part of \cref{cl:huoyibu} and \cref{claim:size}, we have 
$\big(\frac{2}{3}+\frac{\gamma}{4}\big)k<|I_r^{+}|\le \frac{2k}{3}$, a contradiction.

This completes the proof of~\cref{thm:main}.

\section{Concluding Remarks}\label{sec:rmk}

In this paper, we have completely determined the chromatic threshold for a pair of 3-chromatic graphs. A natural next step is to investigate a pair of general graphs. We believe that for any pair of general graphs, their chromatic threshold takes value $\frac{t}{t+2}$ for some $t\in\mathbb{N}$. In particular, we propose the following.

\begin{conj}
For any $r,s\ge 3$, there exists an interval of integers $[a,b]$ such that
\[
   \Big\{\delta_{\chi}(H_1,H_2):~ \chi(H_1)=r, ~\chi(H_2)=s\Big\}= \left\{\frac{t}{t+2}: t\in [a,b]\right\}.
\]
\end{conj}

\bibliographystyle{abbrv}
\bibliography{references.bib}

\end{document}